\documentclass[11pt]{article}

\usepackage[margin=1in]{geometry}

\usepackage{amsfonts}
\usepackage{amsmath}
\usepackage{amsthm} %
\usepackage{amssymb}
\usepackage{mathtools,empheq}
\usepackage{epsfig, graphics}
\usepackage{url}
\usepackage[dvipsnames]{xcolor}
\usepackage{setspace}
\usepackage{float}
\usepackage{colortbl}
\usepackage{bm}
\usepackage{bbm}
\usepackage{dsfont}
\usepackage[square, numbers, comma, sort&compress]{natbib}
\usepackage{tikz}
\usepackage{enumerate}   
\usetikzlibrary{shapes.geometric, arrows}
\usepackage{algorithm}
\usepackage{algpseudocode}
\usepackage{enumitem}
\usepackage{subcaption}
\usepackage{caption}

\DeclareMathAlphabet\mathbfcal{OMS}{cmsy}{b}{n}

\algrenewcommand\algorithmicrequire{\textbf{Input:}}
\algrenewcommand\algorithmicensure{\textbf{Output:}}

\tikzstyle{arrow} = [thick,->,>=stealth]

\makeatletter
\def\blfootnote{\gdef\@thefnmark{}\@footnotetext}
\makeatother

\title{Extended Graphon Mean-Field Games in Discrete Time%
\blfootnote{H.A. and A.S. acknowledge financial support from the Institut Europlace de Finance.}}

\newcommand{\cB}{\mathcal{B}}

\newcommand{\ioN}{\frac{i}{N}}
\newcommand{\joN}{\frac{j}{N}}

\newcommand{\R}{\mathbb{R}}
\newcommand{\RR}{\mathbb{R}}

\newcommand{\N}{\mathbb{N}}
\newcommand{\cW}{\mathcal{W}}
\newcommand{\cM}{\mathcal{M}}

\newcommand{\E}{\mathbb{E}}
\newcommand{\cK}{\mathcal{K}}
\newcommand{\cI}{\mathcal{I}}

\newcommand{\cA}{\mathcal{A}}

\newcommand{\cL}{\mathcal{L}}
\newcommand{\cP}{\mathcal{P}}

\newcommand{\cF}{\mathcal{F}}

\newcommand{\cJ}{\mathcal{J}}

\newcommand{\cX}{\mathcal{X}}
\newcommand{\cT}{\mathcal{T}}

\newcommand{\cH}{\mathcal{H}}

\newcommand{\bfT}{\mathbf{T}}

\newcommand{\bfPi}{\mathbf{\Pi}}
\newcommand{\G}{\mathbb{G}}
\newcommand{\ocT}{\overline\cT}

\newcommand{\ucM}{{\mathcal{\underline M}}}
\newcommand{\uPi}{{\underline\Pi}}
\newcommand{\ucZ}{{\mathcal{\underline Z}}}

\DeclareMathAlphabet\mathbfcal{OMS}{cmsy}{b}{n}

\newcommand{\bcZ}{{\mathbfcal{Z}}}
\newcommand{\bZ}{{\mathbf{Z}}}
\newcommand{\bucM}{{\mathbfcal{\underline M}}}
\newcommand{\buPi}{{\boldsymbol{\underline\Pi}}}
\newcommand{\bucZ}{{\mathbfcal{\underline Z}}}
\newcommand{\buZ}{{\mathbf{\underline Z}}}
\newcommand{\bV}{{\boldsymbol{\mathbf{V}}}}
\newcommand{\bmu}{{\boldsymbol{\mu}}}
\newcommand{\bpi}{{\boldsymbol{\pi}}}
\newcommand{\bumu}{{\boldsymbol{\underline\mu}}}
\newcommand{\umu}{{\underline\mu}}
\newcommand{\ueta}{{\underline\eta}}
\newcommand{\bupi}{{\boldsymbol{\underline\pi}}}
\newcommand{\upi}{{\underline\pi}}
\newcommand{\unu}{{\underline\nu}}

\newcommand{\bupsi}{{\boldsymbol{\underline\psi}}}
\newcommand{\buV}{{\boldsymbol{\underline{\mathbf{V}}}}}
\newcommand{\bunu}{{\boldsymbol{\underline\nu}}}

\usepackage{hyperref}
\hypersetup{
    colorlinks = true,
    linkcolor = {blue},
    citecolor = {blue}
}

\newtheorem{theorem}{Theorem}[section]
\newtheorem{remark}[theorem]{Remark}

\newtheorem{example}[theorem]{Example}
\newtheorem{assumption}{Assumption}
\newtheorem{definition}[theorem]{Definition}

\newtheorem{proposition}[theorem]{Proposition}
\newtheorem{lemma}[theorem]{Lemma}

\usepackage[normalem]{ulem}

\author{
Hamed Amini
\footnote{Center for Applied Optimization, Department of Industrial and Systems Engineering, University of Florida, Gainesville, FL, USA (\href{mailto:aminil@ufl.edu}{aminil@ufl.edu}).}
\and 
Zhongyuan Cao
\footnote{School of Science and Engineering, The Chinese University of Hong Kong (Shenzhen), Guangdong, 518172, China (\href{mailto:caozhongyuan@cuhk.edu.cn}{caozhongyuan@cuhk.edu.cn}).}
\and 
G\"ok\c ce Dayan{\i}kl{\i} 
\footnote{Department of Statistics, University of Illinois at Urbana-Champaign, Champaign, IL 61820, USA (\href{mailto:gokced@illinois.edu}{gokced@illinois.edu}).}
\and 
Mathieu Lauri\`ere
\footnote{Shanghai Center for Data Science; NYU-ECNU Institute of Mathematical Sciences, NYU Shanghai, Shanghai, 200126, People’s Republic of China (\href{mailto:mathieu.lauriere@nyu.edu}{mathieu.lauriere@nyu.edu}).}
\and 
Kexin Shao
\footnote{INRIA  Paris,  48 rue Barrault, CS 61534
75647 Paris Cedex, France (\href{mailto:coco.shao@nyu.edu}{coco.shao@nyu.edu}).}
\and 
Agnès Sulem
\footnote{INRIA  Paris,  48 rue Barrault, CS 61534
75647 Paris Cedex, France (\href{mailto:agnes.sulem@inria.fr}{agnes.sulem@inria.fr}).}
}

\date{}

\begin{document}

\maketitle %

\begin{abstract}
	In this paper, we study games involving a continuum of heterogeneous players in the discrete-time setting with finite state spaces and continuous action spaces. We introduce a new model that incorporates joint state-action interactions within the graphon-weighted aggregate, described by a coupled forward-backward system. We establish the existence of graphon mean-field equilibria and characterize them through this forward-backward system. Additionally, we provide uniqueness results under monotonicity and contraction conditions. To illustrate the practical relevance of our framework, we solve an example of portfolio liquidation with price impact. We provide numerical results for three different graphons and two different initial distributions, illustrating the impact of the network's structure and the distribution heterogeneity on the distribution and the policy.
\end{abstract}

\section{Introduction}

\paragraph{Motivation.}
Large-scale multi-agent systems are common in many real-life applications appearing in domains such as financial markets, energy systems, transportation networks, and epidemic control. In these settings, each individual agent makes decisions over time to optimize their objectives while interacting with a large population. Analyzing such systems is challenging because the complexity grows rapidly with the number of agents and the possible heterogeneity of their interactions.
Mean-Field Games (MFGs), introduced simultaneously by~\cite{lasrylions2007} and~\cite{huang2006}, provide a powerful approximation framework for the Nash equilibrium by considering the limit of infinitely many identical agents that interact symmetrically, i.e., each agent interacts with the population through a homogeneous aggregate involving the population distribution of states or actions. This assumption does not encompass applications where interactions are heterogeneous. For example, in financial markets, portfolio managers adjust their strategies based not only on market averages but also on the performance of specific peer groups or competitors they interact with. Similarly, in epidemic control, individuals’ decisions about social distancing depend on their local contact network such as their neighbors rather than the entire population. This creates heterogeneity in interactions and in agents’ model parameters.

To capture these heterogeneities, Graphon Mean-Field Games (GMFGs), introduced in~\cite{parise2019graphon,parise2021analysis}, extend the classical MFG framework by incorporating weighted interactions through a graphon which is the limit of large dense graphs. This allows modeling of complex network effects and heterogeneities in large populations. Most existing GMFG studies focus on the continuous-time setting or restrict both state and action spaces to be finite in the discrete-time setting. These assumptions limit applicability in scenarios where decisions are made in discrete time or when the learning algorithms that require time discretization are used and actions vary over a continuous domain which is a common feature in epidemic control~\cite{aurell2022finite,amini2022epidemic}, financial decision-making~\cite{tangpizhou2024,laurieretanpizhou2024deep}, opinion dynamics~\cite{opinion}, and energy market modeling~\cite{fraccarolo2025graphon}.
Moreover, existing formulations typically aggregate interactions based solely on states, ignoring the interactions through actions. Indeed, in many practical contexts, such as competitive investment strategies, demand response in energy markets, or epidemic spread, the influence of other agents depends jointly on the state and action distribution, or possibly on the action distribution only. Incorporating such state–action dependent interactions into the GMFG framework is essential for accurately modeling strategic behavior in populations with heterogeneous interactions. MFG models involving the action distribution are sometimes referred to as extended MFGs~\cite{gomes2016extended,alasseur2020extended} or MFG of controls~\cite{cardaliaguet2018mean,achdou2020mean,kobeissi2022classical}. For simplicity, we will refer to graphon games with action distribution as \emph{extended GMFGs}.

\paragraph{Literature review.}
Graphon games can be classified based on their time scale and state space. They were initially introduced for approximating Nash equilibria in \textit{static} and deterministic network games~\cite{parise2019graphon,parise2021analysis}. The static time setting is extended to incorporate stochasticity in~\cite{carmona2021}.

Dynamic (and stochastic) graphon mean-field games (GMFGs) can be further classified into continuous-time and discrete-time frameworks. Some work in the continuous-time setting includes the study of graphon mean-field interacting systems~\cite{bayraktar2020graphon, wu2022, bayraktar2022stationarity,amicaosu2022graphonbsde} and game settings based on them~\cite{caines2019graphon, caines2020graphon, caines2021,amicaosu2023graphoncontrol}. These studies analyze classical results such as propagation of chaos, existence and uniqueness of \emph{graphon mean-field equilibrium} (GMFE) and the approximation results for the finite player counterpart. Among continuous-time graphon games, the \emph{linear-quadratic (LQ) setting} has gained particular attention. \cite{gao2021} derived approximate NE for finite-player games on large graphs under the LQ setting. This work was later extended and refined by~\cite{StochGraphonGame2,lacker2023,hu2024finite}. In the continuous-time and discrete-state space framework,~\cite{aurell2022optimal} conducted a theoretical analysis of finite-state extended graphon games with applications to epidemic dynamics. Their work also proposes a numerical approach based on machine learning methods.

In the discrete-time setting,~\cite{cui2022b} analyzed GMFGs with continuum players, where both the state and action spaces are finite. They demonstrated the existence of Nash equilibria and approximate equilibria under the assumption of Lipschitz continuity in transition kernels and graphons.~\cite{fabian2023learning} extended this analysis to sparse GMFGs within the same framework. \cite{cui2022b} and~\cite{fabian2023learning} established asymptotic convergence.
We emphasize that none of these works studied discrete-time, finite-state, compact-action \textit{extended} graphon mean-field games.

Given that equilibrium solutions for GMFGs are often intractable, recent research efforts have increasingly focused on learning-based methods to approximate equilibria. This active area of study includes contributions on scalable algorithms and convergence guarantees~\cite{chen2023learning, cui2022b, aurell2022finite,fabian2023learning,zhou2024graphon}.

\paragraph{Contributions and structure.}

Our contributions are three-fold. First, the paper extends the discrete-time GMFG literature by considering interactions through the {joint distribution of states and actions} of other players. This allows us to incorporate aggregate interactions based jointly on states and actions for more extensive applications. Second, the paper presents a complete theoretical analysis of the discrete-time, discrete space extended mean-field games by providing {equilibrium characterization results}, {existence and uniqueness results}, and an {approximate equilibrium result} for the finite-player game. Finally, a {numerical illustration} that is motivated by the traders' investment problem in financial markets is presented and numerical results are shown under different graphon and initial distribution settings.

The structure of the paper is as follows. The model setting and technical definitions and assumptions are stated in Section~\ref{sec_model}. The characterization of the graphon mean-field equilibrium with a forward-backward system is presented in Section~\ref{sec_link}. The existence and uniqueness results for the graphon mean-field equilibrium are stated in Sections~\ref{sec_existence} and~\ref{sec_uniqueness}, respectively. The analysis of approximation of the finite player game with the graphon game is given in Section~\ref{sec_epsNash}. Finally, we present numerical experiments on an extended GMFG with application to portfolio liquidation in Section~\ref{sec:num1}.

\section{Graphon Mean-Field Games}\label{sec_model}
\subsection{Preliminaries on graphons}

Let $I:=[0,1]$ be the unit interval equipped with the Euclidean distance. We denote the Lebesgue measure and Borel $\sigma$-field on $I$ by Unif$(I)$ and $\cB(I)$ respectively. The set $I$ is an index set labeling a continuum of players in the graphon game.

Given an $N$-tuple $(x^1,\dots,x^N)$ and an index $i\in[N]$, we denote by $x^{-i}$ the $(N-1)$-tuple of the $x^j$ with $j \neq i$ and $j\in[N]$. Furthermore, with a slight abuse of notation, we identify $(x^i,x^{-i})$ with the $N$-tuple $(x^1,\dots,x^{i-1}, x^i, x^{i+1},\dots, x^N)$.

A graphon is defined as a symmetric measurable function $G: I \times I \rightarrow [0,1]$, which is regarded as the limits of adjacency matrices of weighted dense graphs when the size of graphs (number of vertices) tends to infinity. We denote by $\cW$ the space of graphons.
A graphon can also be viewed  as an operator from $L^{\infty}(I)$ to $L^1(I)$, mapping any $\varphi\in L^\infty(I)$
to:
$
	G\phi(u):=\int_I G (u,v) \phi(v) dv.
$
If $G \in L^p(I \times I)$, we denote by $\|G\|_p = \left(\int_{I \times I} G(u,v)^p du dv \right)^{1/p}$ its $L^p(I \times I)$ norm.
The cut norm of a graphon is defined by
\[
	\|G\|_{\square}:=\sup_{A,B\in\mathcal{B}(I)}\left| \int_{A\times B}G(u,v) \,du\,dv\right|.
\]
By Lov\'asz \cite[Lemma 8.11]{lovasz2012}, the operator norm is equivalent to the cut norm
$
	\| G\|_{\square}\leq  \|G \|_{\infty\to 1}\leq 4\| G\|_{\square},
$
with
$
	\|G \|_{\infty\to 1}:=\sup_{|\phi|\leq 1} \|G\phi\|_{L^1}.
$

\subsection{The model}\label{sec:model}

We consider a discrete-time, discrete-state setting with a finite horizon
$T\in\mathbb{N}$. We denote by
\[
\cT:=\{0,1,\ldots,T-1\}
\qquad\text{and}\qquad
\ocT:=\cT\cup\{T\}=\{0,1,\ldots,T\}
\]
the sets of decision times and state times, respectively.
For a random variable $X$ taking values in a measurable space, we write
$\cL(X)$ for its law.

\begin{definition}
	A finite-horizon, discrete-time graphon mean-field game (GMFG) is
	defined by a tuple $(T,G,\cX,\cA,P,r,r_T,\varphi,\mu_{\mathrm{init}})$,
	where
	\begin{itemize}\itemsep0em
		\item $T$ is the terminal time;
		\item $G$ is a graphon;
		\item $\cX$ is a finite state space;
		\item $\cA$ is an action set, which is a convex and compact subset
		      of $\RR$; we let $C_{\cA}>0$ be such that
		      $|a|\leq C_{\cA}$ for every $a\in\cA$;
		\item  $P:\cT\times I\times\cX\times\cA\times\RR
		      \longrightarrow\cP(\cX)$
		      is a measurable one-step transition probability kernel;
		\item
		      $r:\cT\times I\times\cX\times\cA\times\RR
		      \longrightarrow\RR$
		      is a measurable running reward function;
		\item
		      $r_T:I\times\cX\longrightarrow\RR$
		      is a measurable terminal reward function;
		\item
		       $\varphi:\cT\times I\times\cX\times\cA
		      \longrightarrow\RR$
		      is a measurable interaction function;
		\item
		      $\mu_{\mathrm{init}}
		      =(\mu_{\mathrm{init}}^u)_{u\in I}$ is a measurable family
		      of initial distributions, with
		      $\mu_{\mathrm{init}}^u\in\cP(\cX)$ for every $u\in I$.
	\end{itemize}
\end{definition}

We use bold letters to denote temporal sequences and underlined letters
to denote profiles indexed by the player label.
Given two sets $A$ and $B$, we denote by $A^B$ the set of functions
from $B$ to $A$. We introduce the following notation:
\begin{itemize}\itemsep0em
	\item $\cM := \cP(\cX)$, $\ucM := L^2(I; \cP(\cX))$ and $\bucM := \ucM^{\ocT}$ denote, respectively, the set of state distributions, the
	      set of graphon mean-fields, and the set of their temporal
	      sequences. For $\bumu\in\bucM$, we use the notation $(t,u,x) \mapsto \mu_t^u(x) := \mu(t,u,x)$, representing the probability of agents with index $u$ being at state $x$ at time $t$.
	\item  $\bfPi:=\cA^{\cT\times\cX}$
	      denotes the set of deterministic Markov policies for one
	      player. For policy profiles, we set
	      \[
	      \Pi:=L^2(I;\cA),\qquad
	      \uPi:=\Pi^{\cX},\qquad
	      \buPi:=\uPi^{\cT}.
	      \]
	      For $\bupi\in\buPi$, we use the notation
	      $
	      (t,u,x)\longmapsto
	      \pi_t^u(x):=\pi(t,u,x)\in\cA,
	      $
	      which specifies the action taken in state $x$ at time $t$ by
	      the agent with label $u$.
	      	\item
	      $
	      \bcZ:=\RR^{\cT},
	      \ucZ:=L^2(I),
	      \bucZ:=\ucZ^{\cT}
	      $
	      denote, respectively, the set of aggregate paths for one
	      player, the set of graphon-weighted aggregate profiles at one
	      time, and the set of their temporal sequences.
      
\end{itemize}

The spaces above consist of equivalence classes of measurable profiles.
Throughout the continuum model, all equations, fixed-point relations,
and optimality conditions involving the label $u$ are understood to hold
for Lebesgue-a.e.\ $u\in I$. Modifying a profile on a set of Lebesgue
measure zero does not change the induced mean-field and aggregate
profiles as elements of the corresponding $L^2$ spaces. Whenever a
later result requires evaluation at prescribed labels, such as $u=i/N$,
we will explicitly specify a representative satisfying the required
pointwise regularity and optimality properties; these properties do not
follow merely from choosing an arbitrary measurable representative.

We will also study value functions, which are elements of
$L^2(I)^{\ocT\times\cX}$. We endow this space with the norm
\begin{equation}
	\label{eq_def-norm-infinf2}
	\|\bupsi\|_{\infty,\infty,2}
	=
	\max_{t\in\ocT,\,x\in\cX}
	\left(\int_I |\psi_t^u(x)|^2\,du\right)^{1/2},
	\qquad
	\bupsi\in L^2(I)^{\ocT\times\cX}.
\end{equation}
We endow $\bucZ$ with the norm
\begin{equation}
	\label{eq_def-norm-inf2}
	\|\bupsi\|_{\infty,2}
	=
	\max_{t\in\cT}
	\left(\int_I |\psi_t^u|^2\,du\right)^{1/2},
	\qquad
	\bupsi\in\bucZ.
\end{equation}

We now define the mean-field and aggregate profiles induced by a policy
profile.

\begin{definition}\label{def_GMFG_mu_Z}
	Given $\bupi\in\buPi$, the induced graphon mean-field
	$\bumu^\bupi\in\bucM$ and aggregate
	$\buZ^\bupi\in\bucZ$ are defined recursively as follows. For a.e.\
	$u\in I$,
	\begin{equation}\label{eq_piZ0}
		\mu_0^{\bupi,u}(x)
		=
		\mu_{\mathrm{init}}^u(x),
		\qquad x\in\cX.
	\end{equation}
	For every $t\in\cT$ and a.e.\ $u\in I$,
	\begin{equation}\label{eq_piZt}
		\begin{dcases}
			Z_t^{\bupi,u}
			=
			\displaystyle
			\int_I G(u,v)
			\sum_{x\in\cX}
			\varphi_t^u\!\left(x,\pi_t^v(x)\right)
			\mu_t^{\bupi,v}(x)\,dv,
			\\[1.2em]
			\mu_{t+1}^{\bupi,u}(x)
			=
			\displaystyle
			\sum_{x'\in\cX}
			\mu_t^{\bupi,u}(x')
			P_t^u\!\left(
				x\mid x',
				\pi_t^u(x'),
				Z_t^{\bupi,u}
			\right),
			\qquad x\in\cX.
		\end{dcases}
	\end{equation}
\end{definition}

The total reward of player $u\in I$, using a policy
$\bpi^u\in\bfPi$ against an aggregate path
$\bZ^u\in\bcZ$, is
\begin{equation}\label{eq:reward}
	J^u(\bpi^u;\bZ^u)
	=
	\E\left[
		\sum_{t\in\cT}
		r_t^u(X_t^u,a_t^u,Z_t^u)
		+
		r_T^u(X_T^u)
	\right],
\end{equation}
where the state-action dynamics satisfy
\begin{equation}
	\label{eq_GraphonG-dynamics-Xu}
	X_0^u\sim\mu_{\mathrm{init}}^u,
	\qquad
	a_t^u=\pi_t^u(X_t^u),
	\qquad
	X_{t+1}^u
	\sim
	P_t^u\!\left(
		\cdot\mid X_t^u,a_t^u,Z_t^u
	\right),
	\qquad t\in\cT.
\end{equation}
Thus, actions, running rewards, and graphon-weighted aggregates are
defined at the decision times $t\in\cT$, while $r_T^u(X_T^u)$ is a
terminal state reward and no action is chosen at time $T$.

\begin{definition}[Graphon mean-field equilibrium]
	\label{def_Nash_GMFG}
	A pair
	$(\widehat\bupi,\widehat\buZ)\in\buPi\times\bucZ$
	is called a graphon mean-field equilibrium (GMFE) if:
	\begin{enumerate}[label=\textnormal{(\roman*)}]
		\item for a.e.\ $u\in I$, $\widehat\bpi^u$ is optimal against the
		      aggregate path $\widehat\bZ^u$, i.e.,
		      \[
			      J^u(\widehat\bpi^u;\widehat\bZ^u)
			      \geq
			      J^u(\widetilde\bpi;\widehat\bZ^u),
			      \qquad
			      \forall\,\widetilde\bpi\in\bfPi;
		      \]
		\item the aggregate profile is consistent with the policy
		      profile:
		      \[
			      \widehat\buZ=\buZ^{\widehat\bupi}.
		      \]
	\end{enumerate}
	A policy profile $\widehat\bupi\in\buPi$ is called a GMFE policy if
	$(\widehat\bupi,\buZ^{\widehat\bupi})$ is a GMFE pair.
\end{definition}

\subsection{Characterization of graphon mean-field equilibria}
\label{sec_link}

In this section, we characterize graphon mean-field equilibria through a
forward-backward system in which the forward equation represents the
evolution of the mean-field and the backward equation describes the
value function. For $\mu\in\cP(\cX)$, let
$\operatorname{supp}(\mu):=\{x\in\cX:\mu(x)>0\}$.

\begin{theorem}\label{thm_GMFE_sys}
	Consider the GMFG model of Section~\ref{sec:model}. Assume that the
	running rewards and the terminal reward are bounded and that, for
	every $t\in\cT$, $u\in I$, $x,x'\in\cX$, and $z\in\RR$, the
	mappings $a\longmapsto r_t^u(x,a,z)$ and $a\longmapsto P_t^u(x'\mid x,a,z)$
	are continuous on $\cA$. A pair
	$(\widehat\bupi,\widehat\buZ)\in\buPi\times\bucZ$
	is a GMFE if and only if there exist $(\buV,\bumu) \in L^2(I)^{\ocT\times\cX}\times\bucM$
	satisfying the following forward-backward system with
	$(\widehat\bupi,\widehat\buZ)=(\bupi,\buZ)$:
	\begin{subequations}\label{eq_sys_V_mu}
		\begin{empheq}[left=\empheqlbrace]{align}
			\mu_{t+1}^{u}(x)
			=&
			\sum_{x'\in\cX}
			\mu_t^{u}(x')
			P_t^u\left(
				x\mid x',
				\pi_t^{u}(x'),
				Z_t^u
			\right),
			\quad t\in\cT,
			\label{eq_sys_V_mu_a}
			\\
			V_t^{u}(x)
			=&
			\max_{a\in\cA}
			\left\{
				r_t^u(x,a,Z_t^u)
				+
				\sum_{x'\in\cX}
				P_t^u(x'\mid x,a,Z_t^u)
				V_{t+1}^{u}(x')
			\right\},
			\quad t\in\cT,
			\label{eq_sys_V_mu_b}
			\\
			Z_t^{u}
			=&
			\int_I
			G(u,v)
			\sum_{x\in\cX}
			\varphi_t^u\left(x,\pi_t^v(x)\right)
			\mu_t^v(x)\,dv,
			\quad t\in\cT,
			\label{eq_sys_V_mu_c}
			\\
			\pi_t^{u}(x)
			\in&
			\operatorname*{Argmax}_{a\in\cA}
			\left\{
				r_t^u(x,a,Z_t^u)
				+
				\sum_{x'\in\cX}
				P_t^u(x'\mid x,a,Z_t^u)
				V_{t+1}^{u}(x')
			\right\}, \
			t\in\cT, \
			x\in\operatorname{supp}(\mu_t^u),
			\label{eq_sys_V_mu_d}
			\\
			\mu_0^u(x)
			=&
			\mu_{\mathrm{init}}^u(x),
			\qquad
			V_T^u(x)=r_T^u(x),
			\label{eq_sys_V_mu_e}
		\end{empheq}
	\end{subequations}
for a.e.\ $u\in I$, with all equations holding for every
	$x\in\cX$ and all indicated times, except that the optimality
	condition~\eqref{eq_sys_V_mu_d} is imposed only for
	$x\in\operatorname{supp}(\mu_t^u)$.
\end{theorem}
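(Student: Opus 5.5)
The plan is to decouple the statement into two independent pieces: a consistency part and an optimality part. The consistency part says that the pair $(\bupi,\buZ)$ together with $\bumu$ satisfies \eqref{eq_sys_V_mu_a}, \eqref{eq_sys_V_mu_c} and the initial condition in \eqref{eq_sys_V_mu_e}. The optimality part is the dynamic programming characterization of the individual control problem of player $u$ facing a frozen aggregate path $\bZ^u$.

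\textbf{Consistency.} First I would observe that \eqref{eq_sys_V_mu_a}, \eqref{eq_sys_V_mu_c} and $\mu_0^u=\mu_{\mathrm{init}}^u$ are exactly the recursion of Definition~\ref{def_GMFG_mu_Z}. By induction on $t$, this recursion determines $\mu_t$ and $Z_t$ uniquely in $\ucM$ and $\ucZ$ as $L^2$ classes. Hence the system \eqref{eq_sys_V_mu_a}, \eqref{eq_sys_V_mu_c}, \eqref{eq_sys_V_mu_e} holds for $(\bupi,\buZ,\bumu)$ if and only if $\bumu=\bumu^{\bupi}$ and $\buZ=\buZ^{\bupi}$. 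This is condition (ii) of Definition~\ref{def_Nash_GMFG}, with $\bumu$ forced to equal $\bumu^{\bupi}$.

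\textbf{Optimality, single-agent DP.} Fix $u$ and the path $\bZ^u$, and define $V^u$ by the backward recursion \eqref{eq_sys_V_mu_b} with $V_T^u=r_T^u$.
\begin{itemize}
\item The maximum exists because $a\mapsto r_t^u(x,a,z)+\sum_{x'}P_t^u(x'\mid x,a,z)V^u_{t+1}(x')$ is continuous on the compact set $\cA$.
\item Boundedness of the rewards makes every $V_t^u$ bounded by $T\|r\|_\infty+\|r_T\|_\infty$, so the family lies in $L^2$.
\item Measurability in $u$ follows since a maximum over a compact set of a Carathéodory function is measurable; one can take the sup over a countable dense subset of $\cA$.
\end{itemize}
The standard verification argument, a telescoping sum with the tower property under the Markov dynamics \eqref{eq_GraphonG-dynamics-Xu}, then gives two facts. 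For any $\widetilde\bpi\in\bfPi$,
\[
J^u(\widetilde\bpi;\bZ^u)=\sum_x\mu_{\mathrm{init}}^u(x)V_0^u(x)-\E\Big[\sum_{t\in\cT}\Delta_t^u(X_t,\widetilde\pi_t(X_t))\Big],
\]
where $\Delta_t^u(x,a)\ge0$ is the gap between the maximum in \eqref{eq_sys_V_mu_b} and the bracket evaluated at $a$. Since $\Delta_t^u\ge0$, $\sup_{\widetilde\bpi}J^u=\sum_x\mu^u_{\mathrm{init}}(x)V_0^u(x)$. A measurable selection of maximizers attains this supremum; the selection is needed only for the "only if" direction.

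\textbf{Optimality and the support condition.} A policy $\bpi^u$ is optimal if and only if $\E[\sum_t\Delta_t^u(X_t,\pi_t^u(X_t))]=0$. This holds if and only if $\Delta_t^u(x,\pi_t^u(x))=0$ for every $x$ with $\mathbb P(X_t=x)>0$. The law of $X_t$ under $\bpi^u$ and $\bZ^u$ is exactly $\mu_t^u$ from \eqref{eq_sys_V_mu_a}, so optimality is equivalent to \eqref{eq_sys_V_mu_d} on $\operatorname{supp}(\mu_t^u)$. Combining this with the consistency step yields both directions:
\begin{itemize}
\item For "only if", take $\bumu=\bumu^{\widehat\bupi}$ and $\buV$ from the recursion; \eqref{eq_sys_V_mu_d} holds for a.e.\ $u$ by optimality.
\item For "if", \eqref{eq_sys_V_mu_b} and \eqref{eq_sys_V_mu_e} force $\buV$ to be the value function above, and \eqref{eq_sys_V_mu_d} gives optimality for a.e.\ $u$.
\end{itemize}

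The main obstacle I expect is bookkeeping of the "a.e.\ $u$" quantifiers and measurability. The null sets from each equation must be combined; there are finitely many $t$ and $x$, so a finite union of null sets suffices. I must also ensure $\buV$ is a genuine measurable element of $L^2(I)^{\ocT\times\cX}$. Here I would take the representative of $\buZ$ fixed first, then define $V$ pointwise in $u$ and invoke measurability of the pointwise max via the countable dense subset argument.
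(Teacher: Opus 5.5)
Your proposal is correct and follows essentially the paper's route. The forward equations \eqref{eq_sys_V_mu_a}, \eqref{eq_sys_V_mu_c} and the initial condition are identified with the recursion of Definition~\ref{def_GMFG_mu_Z}, and individual optimality is characterized by the Bellman recursion together with a telescoping verification along the state law $\mu_t^u$. The only difference is that you keep the Bellman slack as an exact identity, $J^u(\widetilde\bpi;\bZ^u)=\sum_x\mu_{\mathrm{init}}^u(x)V_0^u(x)-\E\bigl[\sum_{t}\Delta_t^u(X_t,\widetilde\pi_t(X_t))\bigr]$, which gives necessity of the support condition \eqref{eq_sys_V_mu_d} directly, whereas the paper obtains it by contradiction, replacing the policy from $(t,x)$ onward by a Bellman-optimal continuation. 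Your measurable selection is also unnecessary there, since for fixed $u$ any pointwise choice of maximizers attains the supremum.
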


\begin{proof}
	\textbf{(i) From GMFE to forward-backward system.}

	Assume $(\widehat\bupi,\widehat\buZ)$ is a GMFE, and let
	$\widehat\bumu:=\bumu^{\widehat\bupi}$ denote the mean-field flow induced
	by $\widehat\bupi$. Note that since the action set $\cA$ is compact,
	and by assumption the running reward $r$ and the transition kernel
	$P$ are continuous with respect to $a$, we have, for every
	$(t,u,x,Z,V)$, that the optimization problem $a\mapsto r_t^u(x,a,Z) + \sum_{x'\in\cX} P_t^u(x'\mid x,a,Z)V(x')$
	admits a maximizer. By Definition~\ref{def_Nash_GMFG}, we have the
	following two properties:
	\begin{enumerate}
		\item For a.e.\ $u\in I$, $\widehat\bpi^u$ is an optimal policy for
		      the player of index $u$ against the aggregate
		      $\widehat\bZ^u$. Let $\widehat\bV^u$ denote the optimal value
		      function against this aggregate path. By the Bellman
		      optimality principle for Markov decision processes
		      (see, e.g.,
		      \cite[Lemma 8.7, p.~206]{bertsekas1996stochastic}),
		      $\widehat\bV^u$ satisfies: $\widehat V_T^u(x)=r_T^u(x)$ for all $x\in\cX$, 
		      and
		      \[
			      \widehat V_t^u(x)
			      =
			      \max_{a\in\cA}
			      \left\{
				      r_t^u(x,a,\widehat Z_t^u)
				      +
				      \sum_{x'\in\cX}
				      P_t^u(x'\mid x,a,\widehat Z_t^u)
				      \widehat V_{t+1}^u(x')
			      \right\},
			      \qquad
			      x\in\cX,\quad t\in\cT.
		      \]
		      Moreover,
		      \[
			      \widehat\pi_t^u(x)
			      \in
			      \operatorname*{Argmax}_{a\in\cA}
			      \left\{
				      r_t^u(x,a,\widehat Z_t^u)
				      +
				      \sum_{x'\in\cX}
				      P_t^u(x'\mid x,a,\widehat Z_t^u)
				      \widehat V_{t+1}^u(x')
			      \right\},
			      \qquad
			      x\in\operatorname{supp}(\widehat\mu_t^u),
			      \quad t\in\cT.
		      \]
		      Indeed, if this condition failed at some
		      $(t,x)$ satisfying $\widehat\mu_t^u(x)>0$, one could replace
		      the continuation policy at $(t,x)$ by a Bellman-optimal
		      continuation. This change would weakly improve the
		      continuation payoff from every state at time $t$ and
		      strictly improve it from $x$. Since
		      $\widehat\mu_t^u(x)>0$, it would strictly increase the total
		      expected reward, contradicting the optimality of
		      $\widehat\bpi^u$. By the measurable maximum theorem and the boundedness of
		      the rewards, $\widehat\buV$ is measurable and belongs to
		      $L^2(I)^{\ocT\times\cX}$.

		\item Moreover,
		      $\widehat\buZ=\buZ^{\widehat\bupi}$, using the notation introduced
		      in Definition~\ref{def_GMFG_mu_Z}. By
		      \eqref{eq_piZ0}--\eqref{eq_piZt}, this means that
		      $\widehat\bumu$ and $\widehat\buZ$ satisfy
		      \begin{equation*}
			      \begin{dcases}
				      \widehat\mu_0^u(x)
				      =
				      \mu_{\mathrm{init}}^u(x),
				      \qquad x\in\cX,
				      \\[0.4em]
				      \widehat\mu_{t+1}^u(x)
				      =
				      \displaystyle
				      \sum_{x'\in\cX}
				      \widehat\mu_t^u(x')
				      P_t^u\left(
					      x\mid x',
					      \widehat\pi_t^u(x'),
					      \widehat Z_t^u
				      \right),
				      \qquad x\in\cX,\quad t\in\cT,
				      \\[0.8em]
				      \widehat Z_t^u
				      =
				      \displaystyle
				      \int_I
				      G(u,v)
				      \sum_{x\in\cX}
				      \varphi_t^u\left(
					      x,\widehat\pi_t^v(x)
				      \right)
				      \widehat\mu_t^v(x)\,dv,
				      \qquad t\in\cT.
			      \end{dcases}
		      \end{equation*}
		      Combining the above equations, we obtain the
		      forward-backward system~\eqref{eq_sys_V_mu}.
	\end{enumerate}

	\textbf{(ii) From forward-backward system to GMFE.}

	Let $(\bumu,\buV)$ be a solution to the forward-backward
	system~\eqref{eq_sys_V_mu}, with associated $(\bupi,\buZ)$. Fix
	$u\in I$ for which the system holds.
	By the Bellman equation, for every $t\in\cT$, $x\in\cX$, and
	$a\in\cA$,
	\[
		V_t^u(x)
		\geq
		r_t^u(x,a,Z_t^u)
		+
		\sum_{x'\in\cX}
		P_t^u(x'\mid x,a,Z_t^u)V_{t+1}^u(x').
	\]
	Furthermore, the optimality condition for $\bpi$ gives equality
	when $a=\pi_t^u(x)$ for every
	$x\in\operatorname{supp}(\mu_t^u)$. Therefore,
	\[
		\sum_{x\in\cX}
		\mu_t^u(x)V_t^u(x)
		=
		\sum_{x\in\cX}
		\mu_t^u(x)
		r_t^u\left(x,\pi_t^u(x),Z_t^u\right)
		+
		\sum_{x\in\cX}
		\mu_{t+1}^u(x)V_{t+1}^u(x),
	\]
	where we used the forward equation satisfied by $\bumu$.
	Iterating this identity from $t=0$ to $T-1$ and using
	$V_T^u=r_T^u$ yields
	$
		J^u(\bpi^u;\bZ^u)
		=
		\sum_{x\in\cX}
		\mu_{\mathrm{init}}^u(x)V_0^u(x).
	$

	Now let $\widetilde\bpi\in\bfPi$ be any alternative policy, and let
	$\widetilde\bumu^u$ denote the corresponding state-distribution
	sequence when the player uses $\widetilde\bpi$ against the fixed
	aggregate path $\bZ^u$. Applying the Bellman inequality with
	$a=\widetilde\pi_t(x)$, multiplying by $\widetilde\mu_t^u(x)$, summing over
	$x\in\cX$, and iterating from $t=0$ to $T-1$, we obtain
	$
		J^u(\widetilde\bpi;\bZ^u)
		\leq
		\sum_{x\in\cX}
		\mu_{\mathrm{init}}^u(x)V_0^u(x)
		=
		J^u(\bpi^u;\bZ^u).
	$
	Hence, $\bpi^u$ is a best response against $\bZ^u$ from the
	prescribed initial distribution.
	Finally, by the equations satisfied by $\bumu$ and $\buZ$, we have
	$\bumu=\bumu^{\bupi}$ and
	$\buZ=\buZ^{\bupi}$, i.e., $\buZ$ is the aggregate generated by
	$\bupi$. Therefore, the conditions in
	Definition~\ref{def_Nash_GMFG} are satisfied, and
	$(\bupi,\buZ)$ is a GMFE.
\end{proof}

\begin{remark}\label{rem:bellman-representative}
	The support condition in \eqref{eq_sys_V_mu_d} is necessary because
	Definition~\ref{def_Nash_GMFG} imposes optimality only from the
	prescribed initial distribution. The policy may therefore be
	arbitrary at state-time pairs that occur with zero probability.
	Moreover, under the strong-concavity assumptions introduced in
	Section~\ref{sec_existence}, the Bellman maximizer is unique.
	Consequently, a GMFE policy can be redefined at zero-mass states by
	the unique Bellman maximizer without changing its induced
	mean-field flow, aggregate, or payoff. The resulting canonical
	Bellman representative satisfies \eqref{eq_sys_V_mu_d} for every
	$x\in\cX$.
\end{remark}

\section{Existence of graphon mean-field equilibria}\label{sec_existence}

We now study existence of a GMFE by proving existence of a solution to the
forward--backward system~\eqref{eq_sys_V_mu}. We first introduce the
standing assumptions used throughout this section.

\subsection{Assumptions and preliminary results}
\label{sec_assumptionGMFG}

We use the following assumptions.

\begin{assumption}[Graphon regularity]
	\label{aasp_graphon-assumption}
	The mapping $u\mapsto G(u,\cdot)$ is continuous from $I$ into
	$L^2(I)$, that is,
	\[
		\lim_{|u-u'|\to0}
		\|G(u,\cdot)-G(u',\cdot)\|_{L^2(I)}
		=0.
	\]
\end{assumption}

Since $I$ is compact, this implies uniform continuity. We denote by
$\omega^G(\cdot)$ the modulus of continuity of this mapping:
\[
	\omega^G(h)
	:=
	\sup_{|u-v|\leq h}
	\left(
		\int_I |G(u,w)-G(v,w)|^2\,dw
	\right)^{1/2},
\]
with $\omega^G(h)\to0$ as $h\to0$.

\begin{assumption}[Interaction function regularity]
	\label{asm:varphi-regularity}
	Assume that there is a modulus of continuity
	$\omega^\varphi:\RR_+\to\RR_+$ such that
	\[
		\max_{\substack{t\in\cT,\,x\in\cX}}
		\sup_{a\in\cA}
		\left|
			\varphi_t^u(x,a)-\varphi_t^v(x,a)
		\right|
		\leq
		\omega^\varphi(|u-v|),
		\qquad u,v\in I,
	\]
	and $\omega^\varphi(h)\to0$ as $h\to0$.
\end{assumption}

We define, for all
$(Z,V,t,u,x,a)
	\in
	\RR\times\RR^\cX\times\cT\times I\times\cX\times\cA$,
\begin{equation}\label{eq_def-f-rplusP}
	f_t^u(Z,V,x,a)
	:=
	r_t^u(x,a,Z)
	+
	\sum_{x'\in\cX}
	P_t^u(x'\mid x,a,Z)V(x').
\end{equation}

\begin{assumption}\label{assp_diff_concave_faz}
	The coefficients $r$, $P$, and $f$ satisfy:
	\begin{enumerate}[label=\textnormal{(\arabic*)}]
		\item
		      For every $t\in\cT$, the running reward $r_t$ and the
		      transition kernel $P_t$ are twice differentiable with
		      respect to $a$. Moreover, there exist
		      $C_r,C_P,\epsilon_r>0$ such that, for all
		      $(t,u,x,x',a,Z) \in \cT\times I\times\cX\times\cX \times\cA\times\RR$,
		      \begin{itemize}
			      \item $|r_t^u(x,a,Z)|\leq C_r, \quad |r_T^u(x)|\leq C_r$;
			      \item $\left| \partial_{aa}^2 P_t^u(x'\mid x,a,Z) \right| \leq C_P;$
			      \item $\partial_{aa}^2r_t^u(x,a,Z) < -C_PC_r|\cX|T-\epsilon_r.$
		      \end{itemize}
		      \label{assp_atof_concave}

		\item
		      The mapping
		      $(Z,V)\mapsto\partial_af_t^u(Z,V,x,a)$ is Lipschitz
		      continuous, uniformly in
		      $a\in\cA$, $t\in\cT$, $u\in I$, and $x\in\cX$, on the
		      continuation-value domain $\RR\times[-TC_r,TC_r]^\cX$.
		      More precisely, there exist $L_{f,Z},L_{f,V}\geq0$ such
		      that, for all $(Z,V),(\widetilde Z,\widetilde V) \in \RR\times[-TC_r,TC_r]^\cX$,
		      \[
			      \max_{\substack{
				      t\in\cT,\ u\in I,\\
				      a\in\cA,\ x\in\cX
			      }}
			      \left|
				      \partial_af_t^u(Z,V,x,a)
				      -
				      \partial_af_t^u(
					      \widetilde Z,\widetilde V,x,a
				      )
			      \right|
			      \leq
			      L_{f,Z}|Z-\widetilde Z|
			      +
			      L_{f,V}
			      \max_{x\in\cX}
			      |V(x)-\widetilde V(x)|.
		      \]
		      \label{assp_ZVtodf_lip}
	\end{enumerate}
\end{assumption}

\begin{example}
	Let the transition kernel be given by the softmax function
	\[
		P_t^u(x'\mid x,a,Z)
		=
		\frac{
			\exp\bigl(
				\beta_{t,x,x'}^u
				+\gamma_{t,x,x'}^ua
				+\eta_{t,x,x'}^uZ
			\bigr)
		}{
			\sum_{y\in\cX}
			\exp\bigl(
				\beta_{t,x,y}^u
				+\gamma_{t,x,y}^ua
				+\eta_{t,x,y}^uZ
			\bigr)
		},
	\]
	where the coefficients
	$\beta_{t,x,x'}^u$, $\gamma_{t,x,x'}^u$, and
	$\eta_{t,x,x'}^u$ are uniformly bounded in $(t,x,x',u)$.
	Since the softmax function is $C^\infty$ in its arguments, the
	mapping $a\mapsto P_t^u(\cdot\mid x,a,Z)$
	is $C^2$, and its second derivative is uniformly bounded.
	Let
	$$
		r_t^u(x,a,Z)
		=
		b_t^u(x,Z)-\frac{\kappa}{2}a^2,
		\qquad
		|b_t^u(x,Z)|\leq C_b.
	$$
	Since $|a|\leq C_\cA$, $|r_t^u(x,a,Z)| \leq C_b+\frac{\kappa}{2}C_\cA^2$.
	Assume that $C_P|\cX|TC_\cA^2<2$
	and choose
	\[
		\kappa
		>
		\frac{
			C_PC_b|\cX|T+\epsilon_r
		}{
			1-\frac12C_P|\cX|TC_\cA^2
		}.
	\]
	Setting $C_r := C_b+\frac{\kappa}{2}C_\cA^2$
	and choosing a terminal reward satisfying
	$|r_T^u(x)|\leq C_r$, we obtain
	\[
		\partial_{aa}^2r_t^u(x,a,Z)
		=
		-\kappa
		<
		-C_PC_r|\cX|T-\epsilon_r.
	\]
	Moreover, the bounded softmax coefficients imply that
	$\partial_af_t^u$ is Lipschitz continuous in $(Z,V)$ on
	$\RR\times[-TC_r,TC_r]^\cX$, uniformly in $(t,u,x,a)$.
	Therefore, Assumption~\ref{assp_diff_concave_faz} holds.
\end{example}

We first state some useful lemmas needed in the proof of existence result (Theorem~\ref{thm_Vmu} below).

\begin{lemma}\label{lemma_f_stronglyConcave}
	Suppose Assumption~\ref{assp_diff_concave_faz} holds. Then the
	function $\cA\ni a \longmapsto f(\theta,a) := f_t^u(Z,V,x,a)$
	is continuously differentiable and strongly concave in $a$,
	uniformly in $\theta := (t,u,Z,V,x) \in \cT\times I\times\RR \times[-TC_r,TC_r]^\cX\times\cX$.
	In other words, there exists $\lambda>0$ such that, for every
	$a,\widetilde a\in\cA$, $\epsilon\in[0,1]$, and every such
	$\theta$,
	\[
		f\bigl(
			\theta,\epsilon a+(1-\epsilon)\widetilde a
		\bigr)
		\geq
		\epsilon f(\theta,a)
		+
		(1-\epsilon)f(\theta,\widetilde a)
		+
		\frac{\lambda}{2}
		\epsilon(1-\epsilon)
		|a-\widetilde a|^2.
	\]
\end{lemma}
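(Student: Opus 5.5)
The plan is to bound the second derivative of $a\mapsto f_t^u(Z,V,x,a)$ uniformly from above by $-\lambda$ with $\lambda:=\epsilon_r>0$. The strong concavity inequality then follows from the standard characterization of $C^2$ functions on an interval.

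\textbf{Step 1 (regularity).} By Assumption~\ref{assp_diff_concave_faz}\ref{assp_atof_concave}, $r_t^u$ and each $P_t^u(x'\mid x,\cdot,Z)$ are twice differentiable in $a$. Since $f$ is a finite sum of these functions weighted by the constants $V(x')$, the function $f(\theta,\cdot)$ is twice differentiable on $\cA$. In particular it is continuously differentiable, because a differentiable derivative is continuous.

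\textbf{Step 2 (uniform bound on the second derivative).} For $\theta$ in the stated domain we have $|V(x')|\le TC_r$. Therefore
\[
\partial_{aa}^2 f(\theta,a)=\partial_{aa}^2 r_t^u(x,a,Z)+\sum_{x'\in\cX}\partial_{aa}^2P_t^u(x'\mid x,a,Z)V(x')
< -C_PC_r|\cX|T-\epsilon_r + |\cX|\,C_P\,TC_r = -\epsilon_r .
\]
This bound is uniform in $\theta$ and $a$. This is exactly where the restriction of the continuation values to $[-TC_r,TC_r]^\cX$ is used, and the computation shows why the constant in the assumption was chosen this way.

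\textbf{Step 3 (from the Hessian bound to the inequality).} Fix $a,\widetilde a\in\cA$. Convexity of $\cA$ ensures the segment between them lies in $\cA$. Set $g(s):=f(\theta,\widetilde a+s(a-\widetilde a))+\frac{\epsilon_r}{2}s^2|a-\widetilde a|^2$ for $s\in[0,1]$. Then $g''(s)=|a-\widetilde a|^2\bigl(\partial_{aa}^2f+\epsilon_r\bigr)\le 0$, so $g$ is concave. Hence $g(\epsilon)\ge \epsilon g(1)+(1-\epsilon)g(0)$. Expanding the quadratic terms uses $\epsilon-\epsilon^2=\epsilon(1-\epsilon)$, and this yields the claimed inequality with $\lambda=\epsilon_r$.

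The concavity of $g$ uses the one-dimensional fact that a twice-differentiable function with nonpositive second derivative is concave. This needs only the mean value theorem and no continuity of $g''$, which sidesteps the fact that Assumption~\ref{assp_diff_concave_faz} does not state continuity of the second derivatives.

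\textbf{Main obstacle.} The only genuinely delicate point is Step 2: checking that the assumption's constant exactly compensates the worst case of the transition term. That worst case is $\sum_{x'}|\partial_{aa}^2P|\,|V(x')|\le |\cX|C_PTC_r$, which requires the continuation values to be bounded by $TC_r$ via the domain restriction on $V$. The remaining steps are routine.
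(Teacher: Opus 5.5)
Your proposal is correct and follows the paper's proof: the paper also bounds $\partial_{aa}^2 f(\theta,a)\le -\epsilon_r$ using $|V(x')|\le TC_r$ on the restricted domain, and takes $\lambda=\epsilon_r$. Your Step~3, the concavity of the auxiliary function $g$, spells out the passage from this second-derivative bound to the stated strong-concavity inequality, a step the paper leaves implicit.
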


\begin{proof}
	See Section~\ref{proof:lemma_f_stronglyConcave}.
\end{proof}

\begin{lemma}\label{lemma_ZtoA_lip}
	Suppose Assumption~\ref{assp_diff_concave_faz} holds.
	For all $\theta=(t,u,Z,V,x) \in \cT\times I\times\RR \times[-TC_r,TC_r]^\cX\times\cX$, the function $\cA\ni a \longmapsto f(\theta,a) = f_t^u(Z,V,x,a)$	admits a unique maximizer, which we denote by
	$\widehat a_t^u(Z,V,x)$. Moreover, the mapping $\RR\times[-TC_r,TC_r]^\cX \ni (Z,V) \longmapsto \widehat a_t^u(Z,V,x) \in\cA$
	is Lipschitz continuous uniformly in
	$(t,u,x)\in\cT\times I\times\cX$: there exist constants
	$L_{a,Z},L_{a,V}\geq0$ such that
	\[
		\max_{x\in\cX}
		\left|
			\widehat a_t^u(Z,V,x)
			-
			\widehat a_t^u(
				\widetilde Z,\widetilde V,x
			)
		\right|
		\leq
		L_{a,Z}|Z-\widetilde Z|
		+
		L_{a,V}
		\max_{x\in\cX}
		|V(x)-\widetilde V(x)|.
	\]
\end{lemma}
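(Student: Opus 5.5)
Existence and uniqueness of the maximizer follow directly from Lemma~\ref{lemma_f_stronglyConcave}. For fixed $\theta=(t,u,Z,V,x)$, the map $a\mapsto f(\theta,a)$ is continuous on the compact convex set $\cA$, so a maximizer exists. Strong concavity makes it unique: if $a\neq\widetilde a$ were both maximizers with value $m$, then taking $\epsilon=1/2$ gives $f(\theta,(a+\widetilde a)/2)\geq m+\frac{\lambda}{8}|a-\widetilde a|^2>m$, a contradiction. We denote this maximizer by $\widehat a_t^u(Z,V,x)$.

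For the Lipschitz estimate we use the first-order (variational inequality) characterization of the maximizer over the convex set $\cA$. Since $f(\theta,\cdot)$ is $C^1$ and concave, $\widehat a=\widehat a(\theta)$ is the maximizer if and only if $\partial_a f(\theta,\widehat a)(b-\widehat a)\leq 0$ for all $b\in\cA$. Strong concavity with constant $\lambda$, together with differentiability, gives strong monotonicity of the derivative:
\[
\bigl(\partial_a f(\theta,a)-\partial_a f(\theta,b)\bigr)(a-b)\leq -\lambda|a-b|^2 .
\]
To obtain this, write the strong concavity inequality, divide by $\epsilon$, let $\epsilon\to0$ to get $f(\theta,b)\leq f(\theta,a)+\partial_af(\theta,a)(b-a)-\frac{\lambda}{2}|a-b|^2$, and add the same inequality with $a$ and $b$ swapped.

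Now take $\theta=(t,u,Z,V,x)$ and $\widetilde\theta=(t,u,\widetilde Z,\widetilde V,x)$, and set $\widehat a=\widehat a(\theta)$, $\widetilde a=\widehat a(\widetilde\theta)$. Test the variational inequality for $\theta$ with $b=\widetilde a$, and the one for $\widetilde\theta$ with $b=\widehat a$, then add them:
\[
\bigl(\partial_af(\theta,\widehat a)-\partial_af(\widetilde\theta,\widetilde a)\bigr)(\widetilde a-\widehat a)\leq0 .
\]
Split the difference by inserting $\partial_af(\theta,\widetilde a)$. The first piece, $\bigl(\partial_af(\theta,\widehat a)-\partial_af(\theta,\widetilde a)\bigr)(\widetilde a-\widehat a)$, is at least $\lambda|\widehat a-\widetilde a|^2$ by strong monotonicity. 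Hence
\[
\lambda|\widehat a-\widetilde a|^2\leq |\partial_af(\theta,\widetilde a)-\partial_af(\widetilde\theta,\widetilde a)|\,|\widehat a-\widetilde a| .
\]
Dividing by $|\widehat a-\widetilde a|$ (the case $\widehat a=\widetilde a$ is trivial) and applying Assumption~\ref{assp_diff_concave_faz}\ref{assp_ZVtodf_lip} at the fixed action $\widetilde a$ gives
\[
|\widehat a-\widetilde a|\leq \frac{L_{f,Z}}{\lambda}|Z-\widetilde Z|+\frac{L_{f,V}}{\lambda}\max_{x'}|V(x')-\widetilde V(x')| .
\]
The bound is uniform in $(t,u,x)$ because $\lambda$ is uniform by Lemma~\ref{lemma_f_stronglyConcave} and the Lipschitz constants of $\partial_af$ are uniform by assumption. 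Taking the maximum over $x$ then yields the claim with $L_{a,Z}=L_{f,Z}/\lambda$ and $L_{a,V}=L_{f,V}/\lambda$.

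The only delicate point is that the maximizer may lie on the boundary of $\cA$, so one cannot simply set $\partial_af=0$ and apply an implicit function argument. The variational-inequality formulation above handles interior and boundary maximizers uniformly, so this is the route taken. One must also check that both $V$ and $\widetilde V$ lie in $[-TC_r,TC_r]^\cX$, so that both the uniform $\lambda$ of Lemma~\ref{lemma_f_stronglyConcave} and the Lipschitz bound of Assumption~\ref{assp_diff_concave_faz}\ref{assp_ZVtodf_lip} apply; this holds by the hypothesis of the lemma.
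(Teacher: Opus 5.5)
Your proposal is correct and follows essentially the same route as the paper. Both arguments obtain existence and uniqueness from strong concavity. For the Lipschitz bound, both take the variational inequality for the maximizer over the convex set $\cA$, add the two instances, and insert $\partial_a f(\theta,\widetilde a)$, which gives the same constants $L_{a,Z}=L_{f,Z}/\lambda$ and $L_{a,V}=L_{f,V}/\lambda$. The one minor difference is that you derive the strong monotonicity of $\partial_a f$ directly from strong concavity, whereas the paper cites Montrucchio's Lemma A.1 for it.
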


\begin{proof}
	See Section~\ref{proof:lemma_ZtoA_lip}.
\end{proof}

\begin{assumption}[Regularity of coefficients]
	\label{assp_P_lip}
	For every $t\in\cT$ and $x\in\cX$, we assume:
	\begin{enumerate}[label=\textnormal{(\arabic*)}]
		\item the mappings $I\times\cA\times\RR \ni (u,a,Z) \longmapsto P_t^u(\cdot\mid x,a,Z) \in\cP(\cX)$ and $ I\times\cA\times\RR \ni (u,a,Z) \longmapsto r_t^u(x,a,Z) \in\RR$ are jointly measurable, and the mapping $u\mapsto r_T^u(x)$ is measurable;
		\item\label{assp_z_to_P_lip}
		      the mapping
		      $
			      \RR\ni Z
			      \longmapsto
			      P_t^u(\cdot\mid x,a,Z)
			      \in\RR^{\cX}
		      $
		      is Lipschitz continuous with respect to the sup norm,
		      with Lipschitz constant $L_{P,Z}$, uniformly in
		      $t,u,x,a$. More precisely, for all
		      $Z,\widetilde Z\in\RR$,
		      \[
			      \max_{x'\in\cX}
			      \left|
				      P_t^u(x'\mid x,a,Z)
				      -
				      P_t^u(x'\mid x,a,\widetilde Z)
			      \right|
			      \leq
			      L_{P,Z}|Z-\widetilde Z|;
		      \]

		\item\label{assp_a_to_P_lip}
		      the mapping
		      $
			      \cA\ni a
			      \longmapsto
			      P_t^u(\cdot\mid x,a,Z)
			      \in\RR^{\cX}
		      $
		      is Lipschitz continuous with respect to the sup norm,
		      with Lipschitz constant $L_{P,a}$, uniformly in
		      $t,u,x,Z$. More precisely, for all
		      $a,\widetilde a\in\cA$,
		      \[
			      \max_{x'\in\cX}
			      \left|
				      P_t^u(x'\mid x,a,Z)
				      -
				      P_t^u(x'\mid x,\widetilde a,Z)
			      \right|
			      \leq
			      L_{P,a}|a-\widetilde a|;
		      \]

		\item\label{assp_z_to_r_lip}
		      the mapping
		      $
			      \RR\ni Z
			      \longmapsto
			      r_t^u(x,a,Z)
		      $
		      is Lipschitz continuous with Lipschitz constant
		      $L_{r,Z}$, uniformly in $t,u,x,a$, i.e., for all
		      $Z,\widetilde Z\in\RR$,
		      \[
			      \left|
				      r_t^u(x,a,Z)
				      -
				      r_t^u(x,a,\widetilde Z)
			      \right|
			      \leq
			      L_{r,Z}|Z-\widetilde Z|.
		      \]
	\end{enumerate}
\end{assumption}

\begin{lemma}\label{lemma_ztoVz_lip}
	For $u\in I$ and
	$\bZ\in\bcZ=\RR^\cT$, let us define
	\begin{equation}\label{eq_VZ}
		\begin{dcases}
			V_T^{\bZ,u}(x)
			=
			r_T^u(x),
			\quad x\in\cX,
			\\[0.4em]
			V_t^{\bZ,u}(x)
			=
			\displaystyle
			\max_{a\in\cA}
			\left\{
				r_t^u(x,a,Z_t)
				+
				\sum_{x'\in\cX}
				P_t^u(x'\mid x,a,Z_t)
				V_{t+1}^{\bZ,u}(x')
			\right\}, \quad
			x\in\cX,\quad t\in \cT.
		\end{dcases}
	\end{equation}
	Then, under Assumptions~\ref{assp_diff_concave_faz}
	and~\ref{assp_P_lip}:
	\begin{enumerate}[label=\textnormal{(\roman*)}]
		\item
		      \label{lemma_sub_ZtoV_lip_infinf}
		      the mapping $\bcZ\ni\bZ \longmapsto \bV^{\bZ,u} \in \RR^{\ocT\times\cX}$
		      is Lipschitz continuous uniformly in $u$, i.e., there
		      exists $K_{V,Z}\geq0$ such that
		      \begin{equation}\label{eq_Vdiff-Z}
			      \left\|
				      \bV^{\bZ,u}
				      -
				      \bV^{\widetilde\bZ,u}
			      \right\|_{\infty,\infty}
			      \leq
			      K_{V,Z}
			      \|\bZ-\widetilde\bZ\|_\infty,
		      \end{equation}
		      		      where $ \left \|\bV^{\bZ,u}\right \|_{\infty,\infty} = \max_{t \in \ocT,x \in \cX} \,|V^{\bZ,u}_t(x) | $ and $ \left\| \bZ \right\|_\infty = \max_{t\in \cT} \left| Z_t \right|$.
			
		\item
		      \label{lemma_sub_ZtoV_lip_infinf2}
		      the mapping $\bucZ\ni\buZ \longmapsto \buV^{\buZ} := \bigl(\bV^{\bZ^u,u}\bigr)_{u\in I} \in L^2(I)^{\ocT\times\cX}$
		      is Lipschitz continuous with the same, possibly enlarged,
		      constant $K_{V,Z}$:
		      \begin{equation}\label{eq_ZtoV_lip_infinf2}
			      \left\|
				      \buV^{\buZ}
				      -
				      \buV^{\widetilde\buZ}
			      \right\|_{\infty,\infty,2}
			      \leq
			      K_{V,Z}
			      \|\buZ-\widetilde\buZ\|_{\infty,2},
		      \end{equation}
		      where the norms $\|\cdot\|_{\infty,\infty,2}$ and $\|\cdot\|_{\infty,2}$ are introduced in~\eqref{eq_def-norm-infinf2} and~\eqref{eq_def-norm-inf2} respectively.

		\item
		      \label{lemma_sub_V_lessTCr}
		      For every $u\in I$, $x\in\cX$, and $t\in\ocT$,
		      \[
			      |V_t^{\bZ,u}(x)|
			      \leq
			      (T-t+1)C_r.
		      \]
	\end{enumerate}
\end{lemma}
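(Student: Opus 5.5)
I would prove the three items by backward induction on $t$, starting with (iii), since it guarantees that all continuation values lie in the domain $[-TC_r,TC_r]^\cX$. On that domain Lemma~\ref{lemma_f_stronglyConcave} and Lemma~\ref{lemma_ZtoA_lip} apply, so the maximum in \eqref{eq_VZ} is attained and well defined.

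\textbf{Item (iii).} At $t=T$ we have $|V_T^{\bZ,u}(x)|=|r_T^u(x)|\le C_r$. Assume $|V_{t+1}^{\bZ,u}|\le (T-t)C_r$. Since $P_t^u(\cdot\mid x,a,Z_t)$ is a probability vector,
\[
\bigl|r_t^u(x,a,Z_t)+\textstyle\sum_{x'}P_t^u(x'\mid x,a,Z_t)V_{t+1}^{\bZ,u}(x')\bigr|\le C_r+(T-t)C_r
\]
for every $a$. Taking the maximum over $a$ gives the bound $(T-t+1)C_r$. In particular $|V_{t+1}^{\bZ,u}|\le TC_r$ for all $t\in\cT$, which is exactly the domain required by the lemmas.

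\textbf{Item (i).} Define $\Delta_t:=\max_x|V_t^{\bZ,u}(x)-V_t^{\widetilde\bZ,u}(x)|$, so that $\Delta_T=0$. To compare the two maxima I would use the elementary inequality
\[
|\max_a g-\max_a \widetilde g|\le \sup_a|g-\widetilde g|.
\]
This avoids any need for the Lipschitz property of the maximizer, which would only be needed later for policies. For fixed $a$, I split the difference of the objectives as follows:
\begin{itemize}
\item the reward difference, bounded by $L_{r,Z}|Z_t-\widetilde Z_t|$;
\item the kernel variation $\sum_{x'}\bigl(P_t^u(x'\mid x,a,Z_t)-P_t^u(x'\mid x,a,\widetilde Z_t)\bigr)V_{t+1}^{\bZ,u}(x')$, bounded by $|\cX|L_{P,Z}TC_r|Z_t-\widetilde Z_t|$ using Assumption~\ref{assp_P_lip}\ref{assp_z_to_P_lip} and item (iii);
\item the value variation $\sum_{x'}P_t^u(x'\mid x,a,\widetilde Z_t)\bigl(V_{t+1}^{\bZ,u}-V_{t+1}^{\widetilde\bZ,u}\bigr)(x')$, bounded by $\Delta_{t+1}$.
\end{itemize}
Hence
\[
\Delta_t\le \bigl(L_{r,Z}+|\cX|L_{P,Z}TC_r\bigr)|Z_t-\widetilde Z_t|+\Delta_{t+1}.
\]
Unrolling the recursion gives $\Delta_t\le T\bigl(L_{r,Z}+|\cX|L_{P,Z}TC_r\bigr)\|\bZ-\widetilde\bZ\|_\infty$, so \eqref{eq_Vdiff-Z} holds with $K_{V,Z}=T\bigl(L_{r,Z}+|\cX|L_{P,Z}TC_r\bigr)$, independent of $u$.

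\textbf{Item (ii).} Item (i) cannot simply be integrated, because the $\|\cdot\|_{\infty,2}$ norm takes the maximum over $t$ \emph{outside} the integral. The fix is to keep the time-indexed form of the recursion before integrating. Pointwise in $u$, the recursion above gives
\[
|V_t^{\bZ^u,u}(x)-V_t^{\widetilde\bZ^u,u}(x)|\le C\sum_{s\ge t}|Z_s^u-\widetilde Z_s^u|, \qquad C=L_{r,Z}+|\cX|L_{P,Z}TC_r.
\]
Taking $L^2(I)$ norms and applying Minkowski's inequality yields
\[
\Bigl(\int_I|\cdots|^2du\Bigr)^{1/2}\le C\sum_{s}\|Z_s-\widetilde Z_s\|_{L^2}\le CT\|\buZ-\widetilde\buZ\|_{\infty,2},
\]
which is \eqref{eq_ZtoV_lip_infinf2} with the same constant.

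\textbf{Measurability (main obstacle).} The remaining and most delicate point is that $u\mapsto V_t^{\bZ^u,u}(x)$ is measurable, so that $\buV^{\buZ}$ is indeed an element of $L^2(I)^{\ocT\times\cX}$. I would argue by backward induction. The integrand is jointly measurable in $(u,a)$ by Assumption~\ref{assp_P_lip}(1), since $\buZ$ is measurable in $u$ and $V_{t+1}$ is measurable by the induction hypothesis. It is also continuous in $a$ by Assumption~\ref{assp_diff_concave_faz}. The maximum over the compact set $\cA$ can therefore be replaced by a supremum over a countable dense subset of $\cA$, which is a measurable function of $u$. Finally, boundedness from item (iii) gives square integrability.
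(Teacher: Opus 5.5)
Your proof is correct. Items (iii) and (i) follow the paper almost verbatim: the same backward induction, the same inequality between two maxima, and the same three-term split into reward, kernel and continuation value. The one difference there is that the paper bounds the continuation value by $(T-t)C_r$ rather than $TC_r$. This gives time-dependent constants $L_{V,Z,t}=L_{r,Z}+L_{P,Z}|\cX|(T-t)C_r+L_{V,Z,t+1}$ and $\overline K_{V,Z}=\max_t L_{V,Z,t}$, and it only changes the value of the constant.

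Item (ii) is where you genuinely diverge. The paper integrates the pointwise bound of item (i), namely $\max_{t,x}|V_t^{\bZ^u,u}(x)-V_t^{\widetilde\bZ^u,u}(x)|\le \overline K_{V,Z}\max_{s}|Z_s^u-\widetilde Z_s^u|$. It then uses $\int_I\max_s|Z_s^u-\widetilde Z_s^u|^2\,du\le\sum_s\int_I|Z_s^u-\widetilde Z_s^u|^2\,du$. This costs a factor $\sqrt T$, which is why the paper must enlarge the constant to $\max\{1,\sqrt T\}\,\overline K_{V,Z}$. Your remark that item (i) ``cannot simply be integrated'' is therefore too strong: it can, at the price of $\sqrt T$.

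Your route keeps the additive-in-time form $C\sum_{s\ge t}|Z_s^u-\widetilde Z_s^u|$ and applies Minkowski in $L^2(I)$, which avoids that loss entirely. As a result, the constant in (ii) is literally the one from (i). With the sharper coefficients $L_{r,Z}+L_{P,Z}|\cX|(T-s)C_r$, your argument would give exactly $\overline K_{V,Z}$, with no enlargement.

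For measurability, the paper simply invokes the measurable maximum theorem. Your countable-dense-subset argument, using joint measurability from Assumption~\ref{assp_P_lip} and continuity in $a$, is a self-contained elementary proof of the same fact.
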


\begin{proof}
	See Section~\ref{proof:lemma_ztoVz_lip}.
\end{proof}

\begin{assumption}[Lipschitz continuity and boundedness of $\varphi$]
	\label{assp_varphi_lip}
	There exist constants $L_\varphi,C_\varphi>0$ such that, for all
	$t\in\cT$, $u\in I$, $a,a'\in\cA$, and $x\in\cX$,
	\[
		|\varphi_t^u(x,a)-\varphi_t^u(x,a')|
		\leq
		L_\varphi|a-a'|,
		\qquad
		|\varphi_t^u(x,a)|
		\leq
		C_\varphi.
	\]
\end{assumption}

Under Assumption~\ref{assp_varphi_lip}, the interaction function
$\varphi$ is uniformly bounded by $C_\varphi$, and therefore every
aggregate profile $\buZ$ generated by a policy satisfies $\|\buZ\|_{\infty,2} \leq C_\varphi\|G\|_2 \leq C_\varphi$,
where the last inequality follows from $0\leq G\leq1$. In particular,
$\buZ$ belongs to the bounded subset
\[
	\bucZ_{C_\varphi}
	:=
	\left\{
		\bupsi\in\bucZ:
		\|\bupsi\|_{\infty,2}\leq C_\varphi
	\right\}.
\]

\begin{lemma}\label{lemma_ZtomuZ_lip}
	For $\buZ\in\bucZ_{C_\varphi}$, let us define
	\begin{equation}\label{eq_muZ}
		\begin{dcases}
			\mu_{t+1}^{\buZ,u}(x)
			=
			\displaystyle
			\sum_{x'\in\cX}
			\mu_t^{\buZ,u}(x')
			P_t^u\left(
				x\mid x',
				\pi_t^{*,\buZ,\buV^{\buZ},u}(x'),
				Z_t^u
			\right),\quad
			x\in\cX, u\in I, t\in\cT,
			\\[0.4em]
			\mu_0^{\buZ,u}(x)
			=
			\mu_{\mathrm{init}}^u(x),
			\quad x\in\cX, u\in I,
		\end{dcases}
	\end{equation}
	where $\buV^{\buZ}$ is defined in
	Lemma~\ref{lemma_ztoVz_lip}, and $\bupi^{*,\buZ,\buV^{\buZ}} \in L^2(I;\cA)^{\cT\times\cX}$
	is defined by
	\begin{equation}\label{eq_pi*ZVu}
			\pi_t^{*,\buZ,\buV^{\buZ},u}(x)
			=
			\widehat a_t^u
			\left(
				Z_t^u,
				V_{t+1}^{\buZ,u},
				x
			\right)
			=
			\operatorname*{argmax}_{a\in\cA}
			\left\{
				r_t^u(x,a,Z_t^u)
				+
				\sum_{x'\in\cX}
				P_t^u(x'\mid x,a,Z_t^u)
				V_{t+1}^{\buZ,u}(x')
			\right\},
	\end{equation}
	for every $x\in\cX$, $u\in I$, and $t\in\cT$. Then, under Assumptions~\ref{assp_diff_concave_faz},
	\ref{assp_P_lip}, and~\ref{assp_varphi_lip}:
	\begin{enumerate}[label=\textnormal{(\roman*)}]
		\item
		      \label{lemma_sub_ZtoPi_lip}
		      the mapping $\bucZ_{C_\varphi} \ni \buZ \longmapsto \bupi^{*,\buZ,\buV^{\buZ}} \in L^2(I;\cA)^{\cT\times\cX}$		      is Lipschitz continuous: there exists
		      $K_{\pi,Z}\geq0$ such that
		      \begin{equation*}%
			      \left\|
				      \bupi^{*,\buZ,\buV^{\buZ}}
				      -
				      \bupi^{*,\widetilde\buZ,
				      \buV^{\widetilde\buZ}}
			      \right\|_{\infty,\infty,2}
			      \leq
			      K_{\pi,Z}
			      \|\buZ-\widetilde\buZ\|_{\infty,2}.
		      \end{equation*}

		\item
		      \label{lemma_sub_ZtoMu_lip}
		      the mapping $\bucZ_{C_\varphi} \ni \buZ \longmapsto \bumu^{\buZ} := (\bmu^{\buZ,u})_{u\in I} \in L^2(I)^{\ocT\times\cX}$
		      is Lipschitz continuous: there exists
		      $K_{\mu,Z}\geq0$ such that
		      \begin{equation*}%
			      \|\bumu^{\buZ}
			      -
			      \bumu^{\widetilde\buZ}\|_{\infty,\infty,2}
			      \leq
			      K_{\mu,Z}
			      \|\buZ-\widetilde\buZ\|_{\infty,2}.
		      \end{equation*}
	\end{enumerate}
\end{lemma}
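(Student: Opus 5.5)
For part (i), I would argue pointwise in $u$ and then integrate. Fix $t\in\cT$, $x\in\cX$ and $u$, and write $\pi_t^{*,\buZ,\buV^{\buZ},u}(x)=\widehat a_t^u(Z_t^u,V_{t+1}^{\buZ,u},x)$. The continuation value $V_{t+1}^{\buZ,u}$ lies in $[-TC_r,TC_r]^\cX$ by Lemma~\ref{lemma_ztoVz_lip}\ref{lemma_sub_V_lessTCr}, since $t+1\geq1$ gives $(T-t)C_r\le TC_r$. This is exactly the domain on which Lemma~\ref{lemma_ZtoA_lip} applies, and that lemma gives
\[
\bigl|\pi_t^{*,\buZ,\buV^{\buZ},u}(x)-\pi_t^{*,\widetilde\buZ,\buV^{\widetilde\buZ},u}(x)\bigr|
\le L_{a,Z}|Z_t^u-\widetilde Z_t^u|+L_{a,V}\max_{x'\in\cX}|V_{t+1}^{\buZ,u}(x')-V_{t+1}^{\widetilde\buZ,u}(x')|.
\]
I would square this, use $(\alpha+\beta)^2\le2\alpha^2+2\beta^2$, and bound $\max_{x'}$ by $\sum_{x'}$ inside the square, which costs a factor $|\cX|$. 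Integrating in $u$ then gives a bound by $\|\buZ-\widetilde\buZ\|_{\infty,2}^2$ plus $|\cX|\,\|\buV^{\buZ}-\buV^{\widetilde\buZ}\|_{\infty,\infty,2}^2$. Applying Lemma~\ref{lemma_ztoVz_lip}\ref{lemma_sub_ZtoV_lip_infinf2} yields $K_{\pi,Z}=\sqrt2\,(L_{a,Z}+L_{a,V}\sqrt{|\cX|}K_{V,Z})$. Measurability of $u\mapsto\pi_t^{*,\cdot,u}(x)$ follows from continuity of $\widehat a$ in $(Z,V)$, together with joint measurability (Assumption~\ref{assp_P_lip}) and a measurable-maximum-theorem argument, as in Theorem~\ref{thm_GMFE_sys}. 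Note that the restriction $\buZ\in\bucZ_{C_\varphi}$ is not really needed here, since the Lipschitz bounds hold for all $Z\in\RR$.

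For part (ii), I would use induction on $t$ with a discrete Grönwall argument. Set $\Delta_t^u:=\max_x|\mu_t^{\buZ,u}(x)-\mu_t^{\widetilde\buZ,u}(x)|$, so that $\Delta_0^u=0$. I would decompose $\mu_{t+1}^{\buZ,u}(x)-\mu_{t+1}^{\widetilde\buZ,u}(x)$ into two sums:
\[
\sum_{x'}(\mu_t^{\buZ,u}(x')-\mu_t^{\widetilde\buZ,u}(x'))P_t^u(x\mid x',\pi_t^{*,u}(x'),Z_t^u)
\]
and
\[
\sum_{x'}\mu_t^{\widetilde\buZ,u}(x')\bigl[P_t^u(x\mid x',\pi_t^{*,u}(x'),Z_t^u)-P_t^u(x\mid x',\widetilde\pi_t^{*,u}(x'),\widetilde Z_t^u)\bigr].
\]
The first sum is bounded by $|\cX|\Delta_t^u$, since $P\le1$. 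For the second sum, I would insert an intermediate term and use Assumption~\ref{assp_P_lip}\ref{assp_z_to_P_lip} and~\ref{assp_a_to_P_lip}, together with $\sum_{x'}\mu_t(x')=1$. This gives the bound $L_{P,a}\max_{x'}|\pi_t^{*,u}(x')-\widetilde\pi_t^{*,u}(x')|+L_{P,Z}|Z_t^u-\widetilde Z_t^u|$. Squaring, integrating in $u$, and invoking part (i) gives
\[
\|\Delta_{t+1}\|_{L^2}\le|\cX|\,\|\Delta_t\|_{L^2}+\bigl(L_{P,a}\sqrt{|\cX|}K_{\pi,Z}+L_{P,Z}\bigr)\|\buZ-\widetilde\buZ\|_{\infty,2}.
\]
Iterating over the finitely many steps $t\le T$ gives the constant $K_{\mu,Z}$, and since each entry satisfies $|\mu_t^u(x)|\le\Delta_t^u$ pointwise, the result transfers to the $\|\cdot\|_{\infty,\infty,2}$ norm.

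The main obstacle is keeping the norms consistent: the pointwise-in-$u$ maxima over $\cX$ have to be converted into $L^2(I)$ bounds that match the $\max_{t,x}$-of-$L^2$ norm $\|\cdot\|_{\infty,\infty,2}$. The factors $\sqrt{|\cX|}$ handle this, because $\cX$ is finite. Everything else is a routine chaining of Lemmas~\ref{lemma_ZtoA_lip} and~\ref{lemma_ztoVz_lip}.
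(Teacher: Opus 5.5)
Your proof is correct and uses the same ingredients as the paper: Lemma~\ref{lemma_ZtoA_lip}, Lemma~\ref{lemma_ztoVz_lip}, the Lipschitz bounds on $P$ in $a$ and $Z$, and the same three-term forward induction. The only difference is bookkeeping. The paper works pointwise in $u$ with sup norms over $(t,x)$, using Lemma~\ref{lemma_ztoVz_lip}\ref{lemma_sub_ZtoV_lip_infinf} and a pointwise Lipschitz constant $K_m$ for $(\bpi^u,\bZ^u)\mapsto\bmu^{u,\bpi^u,\bZ^u}$, and integrates in $u$ only at the end, paying a factor $\sqrt T$ through $\int_I\max_s|Z_s^u-\widetilde Z_s^u|^2\,du\le T\|\buZ-\widetilde\buZ\|_{\infty,2}^2$; you instead pass to $L^2(I)$ at every step via Lemma~\ref{lemma_ztoVz_lip}\ref{lemma_sub_ZtoV_lip_infinf2} and pay $\sqrt{|\cX|}$ instead.
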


\begin{proof}
	See Section~\ref{proof:lemma_ZtomuZ_lip}.
\end{proof}

\begin{assumption}[Smallness of coefficients]
	\label{assp_joint_coef_less1}
	$C_\Phi := L_\varphi\|G\|_2L_{a,Z} <1$,	where $L_{a,Z}$ is defined in
	Lemma~\ref{lemma_ZtoA_lip}.
\end{assumption}

\begin{assumption}[Continuity with respect to the player label]
	\label{assp_u_cont}
	There exists a modulus of continuity
	$\omega_u:\RR_+\to\RR_+$, with
	$\omega_u(h)\to0$ as $h\to0$, such that, for all
	$t\in\cT$, $u,v\in I$, $x,x'\in\cX$, $a\in\cA$, and
	$Z\in\RR$,
	\[
		|r_t^u(x,a,Z)-r_t^v(x,a,Z)|
		\leq
		\omega_u(|u-v|),
	\]
	\[
		|P_t^u(x'\mid x,a,Z)-P_t^v(x'\mid x,a,Z)|
		\leq
		\omega_u(|u-v|).
	\]
	Moreover, for all $u,v\in I$ and $x\in\cX$,
	\[
		|r_T^u(x)-r_T^v(x)|
		\leq
		\omega_u(|u-v|),
	\]
	and
	\[
		\max_{x\in\cX}
		|\mu_{\mathrm{init}}^u(x)
		-
		\mu_{\mathrm{init}}^v(x)|
		\leq
		\omega_u(|u-v|).
	\]
\end{assumption}

\subsection{Existence theorem}

Based on the above assumptions and preliminary results, we have the
following existence result.

\begin{theorem}\label{thm_Vmu}
	Suppose Assumptions~\ref{aasp_graphon-assumption}--\ref{assp_u_cont}
	hold. Then there exist $(\buV,\bumu,\bupi,\buZ) \in L^2(I)^{\ocT\times\cX} \times\bucM\times\buPi\times\bucZ$
	satisfying the forward-backward system~\eqref{eq_sys_V_mu}.
	In particular, $(\bupi,\buZ)$ is a GMFE.
\end{theorem}

\begin{proof}%
	The proof of Theorem~\ref{thm_Vmu} relies on an application of
	Schauder's fixed point theorem and is divided into three steps.

	\noindent
	\textbf{Step 1. Definition of the solution space.}
	We recall that
	$L^2(I)^{\ocT\times\cX}$, endowed with the norm
	$\|\cdot\|_{\infty,\infty,2}$ defined
	in~\eqref{eq_def-norm-infinf2}, is a Banach space.
	To simplify the notation, we denote
	\begin{equation}\label{eq_cKnorm}
		\|(\buV,\bumu)\|_{\cK}
		=
		\max\left\{
			\|\buV\|_{\infty,\infty,2},
			\|\bumu\|_{\infty,\infty,2}
		\right\}.
	\end{equation}
	Let $C_1 := \max\left\{(T+1)C_r,1\right\}$,
	and define
	\begin{equation}\label{def_KC1}
		\cK_{C_1}
		:=
		\left\{
			(\buV,\bumu)
			\in
			L^2(I)^{\ocT\times\cX}\times\bucM:
			|V_t^u(x)|
			\leq
			(T-t+1)C_r
			\text{ for all }t,x
			\text{ and a.e.\ }u
		\right\}.
	\end{equation}
	Since $\bumu$ takes values in the simplex $\cP(\cX)$, we have $\|(\buV,\bumu)\|_{\cK} \leq C_1$ for every $(\buV,\bumu)\in\cK_{C_1}$.
	The set $\cK_{C_1}$ is a nonempty closed convex subset of the
	Banach space $L^2(I)^{\ocT\times\cX} \times L^2(I)^{\ocT\times\cX}$.
	Closeness follows from the closed constraints defining the
	time-dependent intervals $[-(T-t+1)C_r,(T-t+1)C_r]$
	and the simplex $\cP(\cX)$.

	\medskip

	\noindent
	\textbf{Step 2. Definition of the aggregate mapping.}
	Recall that we endow $\bucZ$ with the norm
	$\|\cdot\|_{\infty,2}$ defined
	in~\eqref{eq_def-norm-inf2}. We denote
	\[
		\bucZ_{C_\varphi}
		:=
		\left\{
			\buZ\in\bucZ:
			\|\buZ\|_{\infty,2}
			\leq C_\varphi
		\right\}.
	\]

	For
	$(\buV,\bumu)\in\cK_{C_1}$ and
	$\buZ\in\bucZ_{C_\varphi}$, define $\pi_t^{*,\buZ,\buV,u}(x) := \widehat a_t^u \left( Z_t^u,V_{t+1}^u,x \right)$,  $t\in\cT, x\in\cX$.
	Notice that $|V_{t+1}^u(x)| \leq (T-t)C_r \leq TC_r$,
	so that the selector
	$\widehat a_t^u$ from
	Lemma~\ref{lemma_ZtoA_lip} is well defined.

	We apply the Banach fixed point theorem to the map $\Phi^{(\buV,\bumu)} : \bucZ_{C_\varphi} \longrightarrow \bucZ$
	defined by
	\begin{equation}
		\label{eq_def-Phi-for-Z}
		\Phi^{(\buV,\bumu)}(\buZ)_t^u
		=
		\int_I
		G(u,v)
		\sum_{x\in\cX}
		\varphi_t^u
		\left(
			x,\pi_t^{*,\buZ,\buV,v}(x)
		\right)
		\mu_t^v(x)\,dv,
		\quad
		t\in\cT, u\in I.
	\end{equation}
	The mapping is measurable by the measurability of the coefficients
	and of the unique maximizer established above.
	Under Assumption~\ref{assp_varphi_lip}, $\left| \Phi^{(\buV,\bumu)}(\buZ)_t^u \right| \leq C_\varphi$.
	Consequently, $\Phi^{(\buV,\bumu)} \left( \bucZ_{C_\varphi} \right) \subseteq \bucZ_{C_\varphi}$.
	Furthermore, $\bucZ_{C_\varphi}$ is a closed subset of the Banach
	space $\bucZ$, and is therefore complete.

	We next verify that
	$\Phi^{(\buV,\bumu)}$ is a strict contraction.
	For
	$\buZ,\widetilde\buZ\in\bucZ_{C_\varphi}$, the Lipschitz
	continuity of $\varphi$ and
	Lemma~\ref{lemma_ZtoA_lip} imply, for every $t\in\cT$ and
	a.e.\ $u\in I$,
	\begin{align*}
		\left|
			\Phi^{(\buV,\bumu)}(\buZ)_t^u
			-
			\Phi^{(\buV,\bumu)}(\widetilde\buZ)_t^u
		\right|
		&\leq
		\int_I
		G(u,v)
		\sum_{x\in\cX}
		L_\varphi
		\left|
			\pi_t^{*,\buZ,\buV,v}(x)
			-
			\pi_t^{*,\widetilde\buZ,\buV,v}(x)
		\right|
		\mu_t^v(x)\,dv
		\\
		&\leq
		L_\varphi L_{a,Z}
		\int_I
		G(u,v)
		|Z_t^v-\widetilde Z_t^v|\,dv.
	\end{align*}
	Using the Hilbert--Schmidt bound for the graphon operator, we
	obtain
	\[
		\left\|
			\Phi^{(\buV,\bumu)}(\buZ)_t
			-
			\Phi^{(\buV,\bumu)}(\widetilde\buZ)_t
		\right\|_{L^2(I)}
		\leq
		L_\varphi L_{a,Z}\|G\|_2
		\|Z_t-\widetilde Z_t\|_{L^2(I)}.
	\]
	Taking the maximum over $t\in\cT$ gives
	\[
		\left\|
			\Phi^{(\buV,\bumu)}(\buZ)
			-
			\Phi^{(\buV,\bumu)}(\widetilde\buZ)
		\right\|_{\infty,2}
		\leq
		L_\varphi\|G\|_2L_{a,Z}
		\|\buZ-\widetilde\buZ\|_{\infty,2}.
	\]
	Since $L_\varphi\|G\|_2L_{a,Z} =:C_\Phi<1$
	by Assumption~\ref{assp_joint_coef_less1}, the Banach fixed point
	theorem implies that
	$\Phi^{(\buV,\bumu)}$ has a unique fixed point in
	$\bucZ_{C_\varphi}$. We denote it by $\widehat\buZ^{(\buV,\bumu)}$.

	\medskip

	\noindent
	\textbf{Step 3. Application of Schauder's theorem.}
	For
	$(\buV,\bumu)\in\cK_{C_1}$, Step~2 provides a unique fixed point
	$\widehat\buZ^{(\buV,\bumu)}$.
	Conversely, given
	$\buZ\in\bucZ_{C_\varphi}$, we define
	$\buV^{\buZ}$ and $\bumu^{\buZ}$ as in
	\eqref{eq_VZ} and~\eqref{eq_muZ}, respectively.

	Let $\Psi: \cK_{C_1} \longrightarrow \cK_{C_1}$
	be defined by $\Psi(\buV,\bumu) := \Bigl( \buV^{\widehat\buZ^{(\buV,\bumu)}}, \bumu^{\widehat\buZ^{(\buV,\bumu)}} \Bigr)$.
	By Lemma~\ref{lemma_ztoVz_lip}%
	\ref{lemma_sub_V_lessTCr}, the first component satisfies the
	required value bounds, while the second component takes values in
	$\cP(\cX)$. Hence, $\Psi(\cK_{C_1}) \subseteq \cK_{C_1}$.
	We will apply Schauder's fixed point theorem
	(see, for instance,
	\cite[Theorem (Schauder), p.~179]{brezis2011functional}) after
	verifying:
	\begin{enumerate}[label=(\roman*)]
		\item
		      $\Psi:\cK_{C_1}\to\cK_{C_1}$ is continuous;
		      \label{cond_Schauder_continuous}
		\item
		      $\Psi(\cK_{C_1})$ is relatively compact in $L^2(I)^{\ocT\times\cX} \times L^2(I)^{\ocT\times\cX}$.
		      \label{cond_Schauder_compact}
	\end{enumerate}

	\medskip

	\noindent
	\textbf{Continuity of $\Psi$.}
	We first establish the continuity of $(\buV,\bumu) \longmapsto \widehat\buZ^{(\buV,\bumu)}$.
		Let $\buZ = \widehat\buZ^{(\buV,\bumu)}$ and $\widetilde\buZ = \widehat\buZ^{(\widetilde\buV,\widetilde\bumu)}$
	denote the corresponding fixed points. Since
	\[
		\buZ
		=
		\Phi^{(\buV,\bumu)}(\buZ),
		\qquad
		\widetilde\buZ
		=
		\Phi^{(\widetilde\buV,\widetilde\bumu)}
		(\widetilde\buZ),
	\]
	we have
	\begin{align*}
		\|\buZ-\widetilde\buZ\|_{\infty,2}
		&\leq
		\left\|
			\Phi^{(\buV,\bumu)}(\buZ)
			-
			\Phi^{(\buV,\bumu)}(\widetilde\buZ)
		\right\|_{\infty,2}
		+
		\left\|
			\Phi^{(\buV,\bumu)}(\widetilde\buZ)
			-
			\Phi^{(\widetilde\buV,\widetilde\bumu)}
			(\widetilde\buZ)
		\right\|_{\infty,2}.
	\end{align*}
	The first term is bounded by $C_\Phi \|\buZ-\widetilde\buZ\|_{\infty,2}$.
	For the second term, decompose the difference into the change in
	the greedy policy and the change in the mean-field. For fixed
	$\widetilde\buZ$, Lemma~\ref{lemma_ZtoA_lip} yields
	\[
		\left|
			\pi_t^{*,\widetilde\buZ,\buV,v}(x)
			-
			\pi_t^{*,\widetilde\buZ,
			\widetilde\buV,v}(x)
		\right|
		\leq
		L_{a,V}
		\max_{y\in\cX}
		|V_{t+1}^v(y)-\widetilde V_{t+1}^v(y)|.
	\]
	Since
	\[
		\left(
			\int_I
			\max_{y\in\cX}
			|V_{t+1}^v(y)-\widetilde V_{t+1}^v(y)|^2\,dv
		\right)^{1/2}
		\leq
		\sqrt{|\cX|}
		\|\buV-\widetilde\buV\|_{\infty,\infty,2},
	\]
	and
	\[
		\left\|
			\sum_{x\in\cX}
			|\mu_t^\cdot(x)-\widetilde\mu_t^\cdot(x)|
		\right\|_{L^2(I)}
		\leq
		|\cX|
		\|\bumu-\widetilde\bumu\|_{\infty,\infty,2},
	\]
	the Hilbert--Schmidt bound gives
	\begin{align*}
		\left\|
			\Phi^{(\buV,\bumu)}(\widetilde\buZ)
			-
			\Phi^{(\widetilde\buV,\widetilde\bumu)}
			(\widetilde\buZ)
		\right\|_{\infty,2}
		\leq
		L_\varphi L_{a,V}
		\sqrt{|\cX|}\,
		\|G\|_2
		\|\buV-\widetilde\buV\|_{\infty,\infty,2}
		+
		C_\varphi|\cX|\,
		\|G\|_2
		\|\bumu-\widetilde\bumu\|_{\infty,\infty,2}.
	\end{align*}
	Consequently,
	\begin{align}
		\left\|
			\widehat\buZ^{(\buV,\bumu)}
			-
			\widehat\buZ^{(\widetilde\buV,\widetilde\bumu)}
		\right\|_{\infty,2}
		\leq
		\frac{\|G\|_2}{1-C_\Phi}
		\Bigl(
			L_\varphi L_{a,V}\sqrt{|\cX|}
			\|\buV-\widetilde\buV\|_{\infty,\infty,2}
			+
			C_\varphi|\cX|
			\|\bumu-\widetilde\bumu\|_{\infty,\infty,2}
		\Bigr).
		\label{eq:Zhat-continuity}
	\end{align}

	We next use the Lipschitz continuity of
	$\buZ\mapsto\buV^{\buZ}$ and
	$\buZ\mapsto\bumu^{\buZ}$. Recalling
	\eqref{eq_cKnorm}, we have
	\begin{align}
		\left\|
			\Psi(\buV,\bumu)
			-
			\Psi(\widetilde\buV,\widetilde\bumu)
		\right\|_{\cK}
		&=
		\max
		\Bigl\{
			\|
				\buV^{\widehat\buZ^{(\buV,\bumu)}}
				-
				\buV^{\widehat\buZ^{(\widetilde\buV,
				\widetilde\bumu)}}
			\|_{\infty,\infty,2},
			\|
				\bumu^{\widehat\buZ^{(\buV,\bumu)}}
				-
				\bumu^{\widehat\buZ^{(\widetilde\buV,
				\widetilde\bumu)}}
			\|_{\infty,\infty,2}
		\Bigr\}
		\notag
		\\
		&\leq
		\max\{K_{V,Z},K_{\mu,Z}\}
		\left\|
			\widehat\buZ^{(\buV,\bumu)}
			-
			\widehat\buZ^{(\widetilde\buV,\widetilde\bumu)}
		\right\|_{\infty,2}.
		\label{eq_Psi_lipVmuZ}
	\end{align}
	Combining
	\eqref{eq:Zhat-continuity} and
	\eqref{eq_Psi_lipVmuZ}, we conclude that
	$\Psi:\cK_{C_1}\to\cK_{C_1}$ is continuous.

	\medskip

	\noindent
	\textbf{Definition of $\cF$ and proof of relative compactness.}
	We now prove Condition~\ref{cond_Schauder_compact}. Let $\cF := \Psi(\cK_{C_1})$.
	We will show that the elements of $\cF$ admit representatives that
	are uniformly bounded and uniformly equicontinuous as functions of
	the label $u$. The result will then follow from the
	Arzelà--Ascoli theorem.

	We first establish a common modulus of continuity for the aggregate
	profiles
	$\widehat\buZ^{(\buV,\bumu)}$. Since
	$\widehat\buZ^{(\buV,\bumu)}$ is a fixed point of
	$\Phi^{(\buV,\bumu)}$, we choose its representative given by the
	right-hand side of~\eqref{eq_def-Phi-for-Z}. For
	$u,u'\in I$ and $t\in\cT$, we have
	\begin{align*}
		\left|
			\widehat Z_t^{(\buV,\bumu),u}
			-
			\widehat Z_t^{(\buV,\bumu),u'}
		\right|
		&\leq
		\int_I
		|G(u,v)-G(u',v)|
		\left|
			\sum_{x\in\cX}
			\varphi_t^u
			\left(
				x,\pi_t^{*,\widehat\buZ^{(\buV,\bumu)},
				\buV,v}(x)
			\right)
			\mu_t^v(x)
		\right|\,dv
		\\
		+&
		\int_I
		G(u',v)
		\sum_{x\in\cX}
		\left|
			\varphi_t^u
			\left(
				x,\pi_t^{*,\widehat\buZ^{(\buV,\bumu)},
				\buV,v}(x)
			\right)
			-
			\varphi_t^{u'}
			\left(
				x,\pi_t^{*,\widehat\buZ^{(\buV,\bumu)},
				\buV,v}(x)
			\right)
		\right|
		\mu_t^v(x)\,dv
		\\
		&\leq
		C_\varphi
		\|G(u,\cdot)-G(u',\cdot)\|_{L^1(I)}
		+
		\omega^\varphi(|u-u'|)
		\\
		&\leq
		C_\varphi\omega^G(|u-u'|)
		+
		\omega^\varphi(|u-u'|).
	\end{align*}
	Define $\omega_Z(h) := C_\varphi\omega^G(h) + \omega^\varphi(h)$.
	Then
	\[
		\max_{t\in\cT}
		\left|
			\widehat Z_t^{(\buV,\bumu),u}
			-
			\widehat Z_t^{(\buV,\bumu),u'}
		\right|
		\leq
		\omega_Z(|u-u'|),
	\]
	uniformly in
	$(\buV,\bumu)\in\cK_{C_1}$, and
	$\omega_Z(h)\to0$ as $h\to0$.

	We next obtain uniform moduli for the value functions, greedy
	policies, and state distributions generated by these aggregates.

	Define recursively $\omega_{V,T}(h) := \omega_u(h)$,
	and, for $t=T-1,\ldots,0$,
	\begin{align*}
		\omega_{V,t}(h)
		:={}&
		\omega_u(h)
		+
		L_{r,Z}\omega_Z(h)
		+
		|\cX|(T-t)C_r
		\left(
			\omega_u(h)
			+
			L_{P,Z}\omega_Z(h)
		\right)
		+
		\omega_{V,t+1}(h).
	\end{align*}
	Using the inequality between maxima, Assumption~\ref{assp_u_cont},
	the Lipschitz continuity in the aggregate from
	Assumption~\ref{assp_P_lip}, and backward induction, we obtain
	\[
		\max_{x\in\cX}
		\left|
			V_t^{\widehat\bZ^{(\buV,\bumu),u},u}(x)
			-
			V_t^{\widehat\bZ^{(\buV,\bumu),u'},u'}(x)
		\right|
		\leq
		\omega_{V,t}(|u-u'|).
	\]
	The terminal step uses the continuity of
	$u\mapsto r_T^u(x)$ contained in
	Assumption~\ref{assp_u_cont}.

	For $t\in\cT$, set
	\begin{align*}
		\delta_t(h)
		:={}&
		\omega_u(h)
		+
		L_{r,Z}\omega_Z(h)
		+
		|\cX|(T-t)C_r
		\left(
			\omega_u(h)
			+
			L_{P,Z}\omega_Z(h)
		\right)
		+
		\omega_{V,t+1}(h).
	\end{align*}
	The corresponding Bellman objectives for labels $u$ and $u'$ differ
	uniformly in $a$ by at most $\delta_t(|u-u'|)$. Since both
	objectives are $\lambda$-strongly concave, their unique maximizers
	satisfy
	\[
		\max_{x\in\cX}
		\left|
			\pi_t^{*,\widehat\buZ^{(\buV,\bumu)},
			\buV^{\widehat\buZ^{(\buV,\bumu)}},u}(x)
			-
			\pi_t^{*,\widehat\buZ^{(\buV,\bumu)},
			\buV^{\widehat\buZ^{(\buV,\bumu)}},u'}(x)
		\right|
		\leq
		\omega_{\pi,t}(|u-u'|),
	\]
	where $\omega_{\pi,t}(h) := 2\sqrt{\frac{\delta_t(h)}{\lambda}}$.

	Indeed, if $Q^u$ and $Q^{u'}$ denote the two Bellman objectives
	and $a^u,a^{u'}$ their respective maximizers, strong concavity
	implies
	\[
		\frac{\lambda}{2}|a^u-a^{u'}|^2
		\leq
		Q^u(a^u)-Q^u(a^{u'})
		\leq
		2\sup_{a\in\cA}
		|Q^u(a)-Q^{u'}(a)|.
	\]

	Finally, define $\omega_{\mu,0}(h) := \omega_u(h)$,
	and, for $t\in\cT$,
	\begin{equation*}
		\omega_{\mu,t+1}(h)
		:=
		|\cX|\omega_{\mu,t}(h)
		+
		\omega_u(h)
		+
		L_{P,a}\omega_{\pi,t}(h)
		+
		L_{P,Z}\omega_Z(h).
	\end{equation*}
	Using the forward equation, Assumption~\ref{assp_u_cont}, and
	Assumption~\ref{assp_P_lip}, a forward induction gives
	\[
		\max_{x\in\cX}
		\left|
			\mu_t^{\widehat\buZ^{(\buV,\bumu)},u}(x)
			-
			\mu_t^{\widehat\buZ^{(\buV,\bumu)},u'}(x)
		\right|
		\leq
		\omega_{\mu,t}(|u-u'|).
	\]

	All the moduli
	$\omega_Z$, $\omega_{V,t}$, $\omega_{\pi,t}$, and
	$\omega_{\mu,t}$ tend to zero as $h\to0$, uniformly in
	$(\buV,\bumu)\in\cK_{C_1}$. Hence the elements of $\cF$, viewed as functions from $I$ into $\RR^n, n=2|\cX||\ocT|$,
	admit uniformly bounded and uniformly equicontinuous
	representatives. By the Arzelà--Ascoli theorem, $\cF$ is relatively
	compact in $C(I;\RR^n)$.
	Since the embedding $C(I;\RR^n) \hookrightarrow L^2(I;\RR^n)$
	is continuous, $\cF$ is also relatively compact in $L^2(I)^{\ocT\times\cX} \times L^2(I)^{\ocT\times\cX}$.
	This proves Condition~\ref{cond_Schauder_compact}.

	Since
	$\Psi:\cK_{C_1}\to\cK_{C_1}$ is continuous and has relatively
	compact image, Schauder's fixed point theorem implies that
	$\Psi$ admits a fixed point, denoted by
	$(\buV^*,\bumu^*)$. Let $\buZ^* := \widehat\buZ^{(\buV^*,\bumu^*)}$
	and define
	\[
		\pi_t^{*,u}(x)
		:=
		\widehat a_t^u
		\left(
			Z_t^{*,u},
			V_{t+1}^{*,u},
			x
		\right).
	\]
	The fixed-point identity for $\Psi$ gives the backward value
	equation and the forward mean-field equation, while the fixed-point
	identity for
	$\Phi^{(\buV^*,\bumu^*)}$ gives the aggregate consistency equation.
	The policy $\bupi^*$ satisfies the Bellman optimality condition at
	every state and therefore, in particular, on the support of
	$\bumu^*$. Thus $(\buV^*,\bumu^*,\bupi^*,\buZ^*)$
	satisfies~\eqref{eq_sys_V_mu}. By
	Theorem~\ref{thm_GMFE_sys},
	$(\bupi^*,\buZ^*)$ is a GMFE.
\end{proof}

\section{Uniqueness Results} \label{sec_uniqueness}

We establish uniqueness results through two different approaches: monotonicity and contraction.

\subsection{Uniqueness by monotonicity}
\label{sec:unique-mono}

For every family of state-action distributions
$\unu=(\nu^u)_{u\in I}$ such that the mapping
$I\ni u\mapsto \nu^u\in\cP(\cX\times\cA)$ is measurable, and every
$u\in I$, the aggregate $Z_t^{\unu,u}\in\RR$ is defined as
\begin{equation}\label{def_Znu}
	Z_t^{\unu,u}
	=
	\int_{v\in I}
	G(u,v)
	\sum_{x\in\cX}
	\int_\cA
	\varphi_t^u(x,a)\nu^v(x,da)\,dv.
\end{equation}
Given $\mu\in\cP(\cX)$ and $\pi\in\cA^\cX$, we denote by
$\nu^{\pi,\mu}\in\cP(\cX\times\cA)$ the joint state-action distribution
defined as
\begin{equation*}
	\nu^{\pi,\mu}(x,da)
	=
	\mu(x)\,\delta_{\pi(x)}(da).
\end{equation*}
We extend these notations to time- and index-dependent objects. For
instance, $\bunu^{\bupi,\bumu}$ satisfies $\nu_t^{\bupi,\bumu,u}(x,da) = \mu_t^u(x)\,\delta_{\pi_t^u(x)}(da),$
for all $u\in I$ and $t\in\cT$.

Given $\bupi=(\pi_t^u)_t$, we recall that $\bumu^\bupi$ denotes the
mean-field generated by $\bupi$. To simplify notation, we denote $\bunu^\bupi := \bunu^{\bupi,\bumu^\bupi}$.
Furthermore, we denote $Z_t^{\bunu^\bupi,u} := Z_t^{\bupi,u}$, for all $t\in\cT, u\in I$,
where $\left(Z_t^{\bupi,u}\right)_{u\in I,t\in\cT} \in\bucZ$
is introduced in Definition~\ref{def_GMFG_mu_Z}.

We need the following monotonicity assumption.

\begin{assumption}[Monotonicity condition]
	\label{assp_LLmonotone}
	The transitions do not involve interactions, that is, $P$ is
	constant with respect to $Z$. Moreover, the running reward $r$ is
	monotone in the following sense: for every pair of measurable
	state-action distribution profiles $\bunu=(\nu_t^u)_{t\in\cT,u\in I}$ and  $\widetilde\bunu = (\widetilde\nu_t^u)_{t\in\cT,u\in I}$,
	we have
	\[
		\sum_{t\in\cT}
		\int_{u\in I}
		\sum_{x\in\cX}
		\int_\cA
		\left(
			r_t^u(x,a,Z_t^{\bunu,u})
			-
			r_t^u(x,a,Z_t^{\widetilde\bunu,u})
		\right)
		\left(
			\nu_t^u(x,da)
			-
			\widetilde\nu_t^u(x,da)
		\right)\,du
		\leq0.
	\]
	We assume moreover that equality can occur only if
	$\bunu=\widetilde\bunu$ up to null sets for the product of
	counting measure on $\cT$ and Lebesgue measure on $I$.
\end{assumption}

\begin{theorem}\label{thm_UniqueNash_LLmonotone}
	Suppose Assumption~\ref{assp_LLmonotone} holds. Then there exists
	at most one graphon state-action flow and aggregate pair
	$(\widehat\bunu,\widehat\buZ)$, up to $du$-a.e.\
	indistinguishability.
\end{theorem}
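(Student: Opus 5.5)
We follow the Lasry--Lions argument adapted to state-action flows. Suppose $(\widehat\bupi^1,\widehat\buZ^1)$ and $(\widehat\bupi^2,\widehat\buZ^2)$ are two GMFE. Write $\bumu^i:=\bumu^{\widehat\bupi^i}$ and $\bunu^i:=\bunu^{\widehat\bupi^i}$ for $i=1,2$, so that $\widehat\buZ^i=\buZ^{\bunu^i}$ by consistency. Since $P$ does not depend on $Z$ (Assumption~\ref{assp_LLmonotone}), the state-distribution flow of player $u$ depends only on her own policy. Hence the expected reward can be written as a linear functional of the state-action flow:
\[
J^u(\bpi;\bZ)=\sum_{t\in\cT}\sum_{x\in\cX}\int_\cA r_t^u(x,a,Z_t)\,\nu_t^{\bpi,u}(x,da)+\sum_{x\in\cX}r_T^u(x)\mu_T^{\bpi,u}(x).
\]
Here $\nu^{\bpi,u}$ and $\mu^{\bpi,u}$ are the flows generated by $\bpi$ from $\mu_{\mathrm{init}}^u$, and they do not depend on $\bZ$.

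\textbf{Step 1: exchange of policies.} For a.e.\ $u$, optimality of $\widehat\bpi^{1,u}$ against $\widehat\bZ^{1,u}$ gives $J^u(\widehat\bpi^{1,u};\widehat\bZ^{1,u})\ge J^u(\widehat\bpi^{2,u};\widehat\bZ^{1,u})$. Symmetrically, $J^u(\widehat\bpi^{2,u};\widehat\bZ^{2,u})\ge J^u(\widehat\bpi^{1,u};\widehat\bZ^{2,u})$. Adding the two inequalities, the terminal terms cancel, because $\mu_T^{i,u}$ is the same whichever aggregate is used (no $Z$ in $P$). We obtain
\[
\sum_{t\in\cT}\sum_{x}\int_\cA\bigl(r_t^u(x,a,Z_t^{\bunu^1,u})-r_t^u(x,a,Z_t^{\bunu^2,u})\bigr)\bigl(\nu_t^{1,u}-\nu_t^{2,u}\bigr)(x,da)\ge0.
\]
Integrating over $u\in I$ then gives that the left-hand side of the monotonicity inequality in Assumption~\ref{assp_LLmonotone}, with $\bunu=\bunu^1$ and $\widetilde\bunu=\bunu^2$, is $\ge0$.

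\textbf{Step 2: conclusion.} Assumption~\ref{assp_LLmonotone} states that this quantity is $\le0$, so it equals zero. The strictness clause then forces $\bunu^1=\bunu^2$ for every $t$ and a.e.\ $u$. By definition~\eqref{def_Znu}, $\widehat\buZ^1=\buZ^{\bunu^1}=\buZ^{\bunu^2}=\widehat\buZ^2$ for a.e.\ $u$, which proves uniqueness of the pair $(\widehat\bunu,\widehat\buZ)$.

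\textbf{Main obstacle.} The delicate point is justifying the linear representation of $J^u$ and the cancellation of the terminal rewards. Both depend on the transition kernel being independent of $Z$. Without that, $\nu^{\bpi,u}$ under aggregate $\widehat\bZ^1$ would differ from the one under $\widehat\bZ^2$, and the exchange would not close. A secondary technical point is that optimality holds only for a.e.\ $u$. We must also check integrability in $u$ before integrating the pointwise inequality; this follows from bounded, measurable rewards, and $\varphi$ is assumed bounded so that $Z$ is well defined. Finally, policies may differ on zero-mass states, which is why the statement is phrased in terms of $\bunu$ rather than $\bupi$: the flows $\nu^{i}$ do not see such differences.
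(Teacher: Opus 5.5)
Your proposal is correct and follows essentially the same route as the paper. The paper likewise exchanges the two equilibrium policies in the optimality inequalities and uses the $Z$-independence of $P$ to make the state-action flows and terminal terms insensitive to the aggregate. The combined inequality then reduces to the monotonicity functional, and the strict equality case gives $\bunu^1=\bunu^2$ and hence equal aggregates. The only cosmetic difference is that you add the inequalities pointwise in $u$ before integrating, whereas the paper integrates first.
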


\begin{proof}
	Suppose there exist two GMFEs, denoted by
	$(\bupi,\buZ)$ and
	$(\widetilde\bupi,\widetilde\buZ)$. By definition,
	\begin{align*}
		\int_{u\in I}
		\left(
			J^u(\bpi^u;\bZ^u)
			-
			J^u(\widetilde\bpi^u;\bZ^u)
		\right)\,du
		\geq0,
		\quad
		\int_{u\in I}
		\left(
			J^u(\widetilde\bpi^u;\widetilde\bZ^u)
			-
			J^u(\bpi^u;\widetilde\bZ^u)
		\right)\,du
		\geq0.
	\end{align*}
	Moreover, by Definition~\ref{def_Nash_GMFG}, $\bZ^u=\bZ^{\bupi,u}$ and $\widetilde\bZ^u = \bZ^{\widetilde\bupi,u}$
	for a.e.\ $u\in I$. Notice that
	$ \buZ^{\bupi} = \buZ^{\bunu^\bupi}, $ where $\bunu^\bupi$ is the state-action distribution profile generated by $\bupi$. Likewise, $ \buZ^{\widetilde\bupi} = \buZ^{\bunu^{\widetilde\bupi}}$.	
	
	Since Assumption~\ref{assp_LLmonotone} makes the transition
	kernel independent of the aggregate, the state-action law generated
	by a fixed policy does not change when the same policy is evaluated
	against a different aggregate path. The terminal reward terms also
	cancel in each comparison because $r_T^u$ is independent of the
	aggregate and the terminal state law generated by a fixed policy is
	unchanged. Therefore, the following reward differences can be
	written using the equilibrium state-action distribution profiles of
	$\bupi$ and $\widetilde\bupi$. For all $u\in I$, let $\nu_t^u:=\nu_t^{\bupi,u}$ and  $\widetilde\nu_t^u := \nu_t^{\widetilde\bupi,u}$.
	Combining the two inequalities above, we obtain
	\begin{align*}
		0
		&\leq
		\int_{u\in I}
		\Big(
			J^u(\bpi^u;\bZ^{\bupi,u})
			-
			J^u(\bpi^u;\bZ^{\widetilde\bupi,u})
			+
			J^u(\widetilde\bpi^u;
			\bZ^{\widetilde\bupi,u})
			-
			J^u(\widetilde\bpi^u;
			\bZ^{\bupi,u})
		\Big)\,du
		\\
		&=
		\int_{u\in I}
		\sum_{t\in\cT}
		\sum_{x\in\cX}
		\Big[
			\left(
				r_t^u(x,\pi_t^u(x),Z_t^{\bupi,u})
				-
				r_t^u(x,\pi_t^u(x),
				Z_t^{\widetilde\bupi,u})
			\right)
			\mu_t^u(x)
		\\
		&\hspace{11em}
			+
			\left(
				r_t^u(x,\widetilde\pi_t^u(x),
				Z_t^{\widetilde\bupi,u})
				-
				r_t^u(x,\widetilde\pi_t^u(x),
				Z_t^{\bupi,u})
			\right)
			\widetilde\mu_t^u(x)
		\Big]\,du
		\\
		&=
		\int_{u\in I}
		\sum_{t\in\cT}
		\sum_{x\in\cX}
		\int_\cA
		\Big[
			\left(
				r_t^u(x,a,Z_t^{\bupi,u})
				-
				r_t^u(x,a,Z_t^{\widetilde\bupi,u})
			\right)
			\mu_t^u(x)\delta_{\pi_t^u(x)}(da)
		\\
		&\hspace{11em}
			+
			\left(
				r_t^u(x,a,Z_t^{\widetilde\bupi,u})
				-
				r_t^u(x,a,Z_t^{\bupi,u})
			\right)
			\widetilde\mu_t^u(x)
			\delta_{\widetilde\pi_t^u(x)}(da)
		\Big]\,du
		\\
		&=
		\int_{u\in I}
		\sum_{t\in\cT}
		\sum_{x\in\cX}
		\int_\cA
		\Big[
			\left(
				r_t^u(x,a,Z_t^{\bunu,u})
				-
				r_t^u(x,a,Z_t^{\widetilde\bunu,u})
			\right)
			\nu_t^u(x,da)
		\\
		&\hspace{11em}
			+
			\left(
				r_t^u(x,a,Z_t^{\widetilde\bunu,u})
				-
				r_t^u(x,a,Z_t^{\bunu,u})
			\right)
			\widetilde\nu_t^u(x,da)
		\Big]\,du
		\\
		&=
		\int_{u\in I}
		\sum_{t\in\cT}
		\sum_{x\in\cX}
		\int_\cA
		\left(
			r_t^u(x,a,Z_t^{\bunu,u})
			-
			r_t^u(x,a,Z_t^{\widetilde\bunu,u})
		\right)
		\left[
			\nu_t^u(x,da)
			-
			\widetilde\nu_t^u(x,da)
		\right]\,du.
	\end{align*}
By Assumption~\ref{assp_LLmonotone}, the right-hand side is non-positive. Since it is also non-negative, it must vanish. By the equality case in Assumption~\ref{assp_LLmonotone}, it follows that $\bunu=\widetilde\bunu$, and therefore $\buZ^{\bunu} = \buZ^{\widetilde\bunu}$ for a.e.\ $u\in I$ and every $t\in\cT$. Hence, the graphon state-action flow and the associated aggregate are unique, up to $du$-a.e.\ indistinguishability.
\end{proof}

\subsection{Uniqueness by contraction}

\begin{theorem}\label{thm_UniqueNash_Banach}
	Suppose Assumptions~\ref{aasp_graphon-assumption}--\ref{assp_u_cont}
	hold. Moreover, suppose that
	\begin{equation}
		\label{eq:thm-unique-constants}
		C_\Psi
		:=
		\max\{K_{V,Z},K_{\mu,Z}\}
		\frac{
			\|G\|_2
			\left(
				L_\varphi L_{a,V}\sqrt{|\cX|}
				+
				C_\varphi|\cX|
			\right)
		}{
			1-L_\varphi L_{a,Z}\|G\|_2
		}
		<1,
		\qquad\mbox{and}\qquad
		L_\varphi L_{a,Z}\|G\|_2<1,
	\end{equation}
	where we recall that $L_{a,Z}$ and $L_{a,V}$ are defined in
	Lemma~\ref{lemma_ZtoA_lip}, $K_{V,Z}$ is defined in
	Lemma~\ref{lemma_ztoVz_lip}, $K_{\mu,Z}$ is defined in
	Lemma~\ref{lemma_ZtomuZ_lip}, and $C_\varphi,L_\varphi$ are
	defined in Assumption~\ref{assp_varphi_lip}.

	Then there exists a unique graphon mean-field flow and aggregate
	pair. Moreover, the associated canonical Bellman GMFE policy is
	unique. Equivalently, the GMFE is unique up to modifications of
	the policy at state-time pairs carrying zero equilibrium mass.
\end{theorem}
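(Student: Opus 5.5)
Existence is already given by Theorem~\ref{thm_Vmu}. For uniqueness, I would show that the map $\Psi$ built in Step~3 of that proof is a strict contraction on $\cK_{C_1}$, and that every GMFE produces a fixed point of $\Psi$.

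\textbf{Contraction.} Combining \eqref{eq:Zhat-continuity} with \eqref{eq_Psi_lipVmuZ}, and bounding each of $\|\buV-\widetilde\buV\|_{\infty,\infty,2}$ and $\|\bumu-\widetilde\bumu\|_{\infty,\infty,2}$ by $\|(\buV,\bumu)-(\widetilde\buV,\widetilde\bumu)\|_{\cK}$, gives
\[
\|\Psi(\buV,\bumu)-\Psi(\widetilde\buV,\widetilde\bumu)\|_{\cK}\leq C_\Psi\,\|(\buV,\bumu)-(\widetilde\buV,\widetilde\bumu)\|_{\cK}.
\]
The second condition in \eqref{eq:thm-unique-constants} is exactly what makes $\widehat\buZ^{(\buV,\bumu)}$ well defined through Step~2. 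The set $\cK_{C_1}$ is a closed subset of a Banach space, hence complete. So, by Banach's theorem, $\Psi$ has a unique fixed point $(\buV^*,\bumu^*)$.

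\textbf{Every GMFE yields a fixed point of $\Psi$.} Let $(\bupi,\buZ)$ be a GMFE. By Theorem~\ref{thm_GMFE_sys} there exist $(\buV,\bumu)$ solving \eqref{eq_sys_V_mu}. The Bellman equation \eqref{eq_sys_V_mu_b} has a unique solution given $\buZ$, so $\buV=\buV^{\buZ}$, and Lemma~\ref{lemma_ztoVz_lip}\ref{lemma_sub_V_lessTCr} puts $(\buV,\bumu)$ in $\cK_{C_1}$.

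The key point is the zero-mass states, where $\bupi$ may differ from the greedy policy $\bupi^{*,\buZ,\buV}$. By Lemma~\ref{lemma_ZtoA_lip}, the maximizer in \eqref{eq_sys_V_mu_d} is unique, so $\pi_t^u(x)=\pi_t^{*,\buZ,\buV,u}(x)$ whenever $\mu_t^u(x)>0$. Two consequences follow:
\begin{itemize}
\item In \eqref{eq_sys_V_mu_c} the integrand carries the factor $\mu_t^v(x)$, so replacing $\bupi$ by the greedy policy leaves the aggregate unchanged. Hence $\buZ=\Phi^{(\buV,\bumu)}(\buZ)$, and uniqueness of the fixed point in Step~2 gives $\buZ=\widehat\buZ^{(\buV,\bumu)}$.
\item In the forward equation \eqref{eq_sys_V_mu_a} the transition from $x'$ is weighted by $\mu_t^u(x')$, so the same substitution leaves it unchanged. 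Induction in $t$ then gives $\bumu=\bumu^{\buZ}$.
\end{itemize}
Therefore $\Psi(\buV,\bumu)=(\buV,\bumu)$, and so $(\buV,\bumu)=(\buV^*,\bumu^*)$. It follows that $\buZ=\widehat\buZ^{(\buV^*,\bumu^*)}$ is unique.

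\textbf{Conclusion.} The mean-field flow and aggregate $(\bumu^*,\buZ^*)$ are unique. The canonical Bellman policy $\pi_t^{*,u}(x)=\widehat a_t^u(Z_t^{*,u},V_{t+1}^{*,u},x)$ (Remark~\ref{rem:bellman-representative}) is then uniquely determined. Any GMFE policy coincides with it on $\operatorname{supp}(\mu_t^{*,u})$, for a.e.\ $u$. This gives uniqueness up to modifications at state-time pairs carrying zero mass. The state-action flow $\bunu^{\bupi,\bumu}$ is likewise unique, since it only sees $\bupi$ on the support.

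\textbf{Main obstacle.} The delicate step is the identification $\buZ=\widehat\buZ^{(\buV,\bumu)}$ and $\bumu=\bumu^{\buZ}$ for an arbitrary GMFE, which carefully handles the support issue and the a.e.\ representatives. The contraction estimate itself is just bookkeeping of constants already established in Theorem~\ref{thm_Vmu}.
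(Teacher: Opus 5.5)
Your proposal is correct and follows the paper's proof. In both, $\Psi$ is shown to be a contraction with constant $C_\Psi$ by combining the aggregate-continuity estimate with the Lipschitz bounds for $\buZ\mapsto(\buV^{\buZ},\bumu^{\buZ})$, and every GMFE, after its policy is replaced at zero-mass states by the unique Bellman maximizer, is identified with the unique fixed point. The differences are minor: the paper gets existence directly from the Banach fixed point rather than citing the existence theorem, and your check that an arbitrary GMFE satisfies $\buZ=\widehat\buZ^{(\buV,\bumu)}$ and $\bumu=\bumu^{\buZ}$ spells out a step the paper only asserts.
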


\begin{proof}%
	By Theorem~\ref{thm_GMFE_sys}, every GMFE satisfies the
	forward-backward system~\eqref{eq_sys_V_mu}, with the optimality
	condition imposed on the support of its equilibrium state
	distribution. Under Assumption~\ref{assp_diff_concave_faz}, the
	Bellman maximizer is unique. Thus, any GMFE policy can be replaced
	at zero-mass state-time pairs by the unique Bellman maximizer
	without changing its induced mean-field flow, aggregate, or payoff.
	It is therefore sufficient to prove the existence and uniqueness of
	a canonical bounded solution $(\buV,\bumu)$ in $L^2(I)^{\ocT\times\cX}\times\bucM$
	to the system~\eqref{eq_sys_V_mu} in three steps:
	\vspace{-0.1cm}
	\begin{enumerate}
		\item We define the solution space and restrict
		      $(\buV,\bumu)$ to be in a feasible set
		      $\cK_{C_1}$ as in~\eqref{def_KC1}.

		\item We show that, for every
		      $(\buV,\bumu)\in\cK_{C_1}$, the map
		      $\Phi^{(\buV,\bumu)}$ defined as
		      in~\eqref{eq_def-Phi-for-Z} has a unique fixed point,
		      denoted by
		      $\widehat\buZ^{(\buV,\bumu)}$.

		\item We prove, by the Banach fixed point theorem, that the map
		      \[
			      \Psi:
			      \cK_{C_1}\longrightarrow\cK_{C_1},
			      \qquad
			      (\buV,\bumu)
			      \longmapsto
			      \left(
				      \buV^{\widehat\buZ^{(\buV,\bumu)}},
				      \bumu^{\widehat\buZ^{(\buV,\bumu)}}
			      \right),
		      \]
		      admits a unique fixed point
		      $(\widehat\buV,\widehat\bumu)$.
	\end{enumerate}

	The first two steps are identical to the proof of
	Theorem~\ref{thm_Vmu}. We focus on Step~3, using the extra
	assumption~\eqref{eq:thm-unique-constants} on the coefficients.

        Let $(\buV,\bumu), (\widetilde\buV,\widetilde\bumu) \in\cK_{C_1}$,
	and recall~\eqref{eq_Psi_lipVmuZ}. Moreover, for the right-hand
	side of~\eqref{eq_Psi_lipVmuZ}, we have
	\begin{align*}
		&
		\left\|
			\widehat\buZ^{(\buV,\bumu)}
			-
			\widehat\buZ^{(\widetilde\buV,\widetilde\bumu)}
		\right\|_{\infty,2}
		\\
		&\leq
		\max_{t\in\cT}
		\Bigg(
		\int_{u\in I}
		\Bigg|
			\int_{v\in I}
			G(u,v)
			\sum_{x\in\cX}
			\Big(
				\varphi_t^u
				\left(
					x,
					\pi_t^{*,
					\widehat\buZ^{(\buV,\bumu)},
					\buV,v}(x)
				\right)
		\\
		&\hspace{16em}
				-
				\varphi_t^u
				\left(
					x,
					\pi_t^{*,
					\widehat\buZ^{(\widetilde\buV,
					\widetilde\bumu)},
					\widetilde\buV,v}(x)
				\right)
			\Big)
			\mu_t^v(x)\,dv
		\Bigg|^2du
		\Bigg)^{1/2}
		\\
		&\quad+
		\max_{t\in\cT}
		\Bigg(
		\int_{u\in I}
		\Bigg|
			\int_{v\in I}
			G(u,v)
			\sum_{x\in\cX}
			\varphi_t^u
			\left(
				x,
				\pi_t^{*,
				\widehat\buZ^{(\widetilde\buV,
				\widetilde\bumu)},
				\widetilde\buV,v}(x)
			\right)
			\left(
				\mu_t^v(x)
				-
				\widetilde\mu_t^v(x)
			\right)\,dv
		\Bigg|^2du
		\Bigg)^{1/2}.
	\end{align*}

	For the first term, since $\mu_t^v$ is a probability distribution
	and $\varphi$ is $L_\varphi$-Lipschitz continuous,
	\begin{align*}
		&
		\left|
			\sum_{x\in\cX}
			\Big(
				\varphi_t^u
				(x,\pi_t^{*,\widehat\buZ^{(\buV,\bumu)},
				\buV,v}(x))
				-
				\varphi_t^u
				(x,\pi_t^{*,\widehat\buZ^{(\widetilde\buV,
				\widetilde\bumu)},\widetilde\buV,v}(x))
			\Big)
			\mu_t^v(x)
		\right|
		\\
		&\leq
		L_\varphi
		\max_{x\in\cX}
		\left|
			\pi_t^{*,\widehat\buZ^{(\buV,\bumu)},
			\buV,v}(x)
			-
			\pi_t^{*,\widehat\buZ^{(\widetilde\buV,
			\widetilde\bumu)},\widetilde\buV,v}(x)
		\right|.
	\end{align*}

	Moreover, by Lemma~\ref{lemma_ZtoA_lip}, for every
	$t\in\cT$,
	\begin{align*}
		&
		\left(
			\int_I
			\max_{x\in\cX}
			\left|
				\pi_t^{*,\widehat\buZ^{(\buV,\bumu)},
				\buV,v}(x)
				-
				\pi_t^{*,\widehat\buZ^{(\widetilde\buV,
				\widetilde\bumu)},\widetilde\buV,v}(x)
			\right|^2dv
		\right)^{1/2}
		\\
		&\qquad \leq
		L_{a,Z}
		\left\|
			\widehat Z_t^{(\buV,\bumu)}
			-
			\widehat Z_t^{(\widetilde\buV,
			\widetilde\bumu)}
		\right\|_{L^2(I)}
	+
		L_{a,V}
		\left(
			\int_I
			\max_{x\in\cX}
			|V_{t+1}^v(x)
			-
			\widetilde V_{t+1}^v(x)|^2\,dv
		\right)^{1/2}
		\\
		&\qquad \leq
		L_{a,Z}
		\left\|
			\widehat\buZ^{(\buV,\bumu)}
			-
			\widehat\buZ^{(\widetilde\buV,
			\widetilde\bumu)}
		\right\|_{\infty,2}
		+
		L_{a,V}\sqrt{|\cX|}
		\|\buV-\widetilde\buV\|_{\infty,\infty,2}.
	\end{align*}
	Using the Hilbert--Schmidt bound for the graphon operator, the
	first term in the preceding decomposition is therefore bounded by
	\[
		L_\varphi\|G\|_2
		\left(
			L_{a,Z}
			\left\|
				\widehat\buZ^{(\buV,\bumu)}
				-
				\widehat\buZ^{(\widetilde\buV,
				\widetilde\bumu)}
			\right\|_{\infty,2}
			+
			L_{a,V}\sqrt{|\cX|}
			\|\buV-\widetilde\buV\|_{\infty,\infty,2}
		\right).
	\]

	For the second term, using
	$|\varphi_t^u(x,a)|\leq C_\varphi$, we have
	\[
		\left|
			\sum_{x\in\cX}
			\varphi_t^u(x,a_x)
			\left(
				\mu_t^v(x)
				-
				\widetilde\mu_t^v(x)
			\right)
		\right|
		\leq
		C_\varphi
		\sum_{x\in\cX}
		|\mu_t^v(x)-\widetilde\mu_t^v(x)|.
	\]
Hence, the second term is bounded by $C_\varphi|\cX|\|G\|_2 \|\bumu-\widetilde\bumu\|_{\infty,\infty,2}$. 

Substituting these estimates and rearranging terms, recalling that $L_\varphi L_{a,Z}\|G\|_2<1$, yields
	\begin{align*}
		\left\|
			\widehat\buZ^{(\buV,\bumu)}
			-
			\widehat\buZ^{(\widetilde\buV,
			\widetilde\bumu)}
		\right\|_{\infty,2}
		&\leq
		\frac{\|G\|_2}{
			1-L_\varphi L_{a,Z}\|G\|_2
		}
		\Big(
			L_\varphi L_{a,V}\sqrt{|\cX|}
			\|\buV-\widetilde\buV\|_{\infty,\infty,2}
			+
			C_\varphi|\cX|
			\|\bumu-\widetilde\bumu\|_{\infty,\infty,2}
		\Big)
		\\
		&\leq
		\frac{
			\|G\|_2
			\left(
				L_\varphi L_{a,V}\sqrt{|\cX|}
				+
				C_\varphi|\cX|
			\right)
		}{
			1-L_\varphi L_{a,Z}\|G\|_2
		}
		\left\|
			(\buV,\bumu)
			-
			(\widetilde\buV,\widetilde\bumu)
		\right\|_{\cK}.
	\end{align*}

	Finally, using the Lipschitz continuity derived
	in~\eqref{eq_Psi_lipVmuZ}, we have
	\begin{align*}
		\left\|
			\Psi(\buV,\bumu)
			-
			\Psi(\widetilde\buV,\widetilde\bumu)
		\right\|_{\cK}
		\leq
		\max\{K_{V,Z},K_{\mu,Z}\}
		\frac{
			\|G\|_2
			\left(
				L_\varphi L_{a,V}\sqrt{|\cX|}
				+
				C_\varphi|\cX|
			\right)
		}{
			1-L_\varphi L_{a,Z}\|G\|_2
		}
		\left\|
			(\buV,\bumu)
			-
			(\widetilde\buV,\widetilde\bumu)
		\right\|_{\cK}.
	\end{align*}
	Thus, provided the constant $C_\Psi$ defined
	in~\eqref{eq:thm-unique-constants} is strictly less than $1$, the
	mapping $\Psi$ is a contraction. By the Banach fixed point theorem,
	$\Psi$ admits a unique fixed point $(\widehat\buV,\widehat\bumu) \in\cK_{C_1}$. 

Let $\widehat\buZ := \widehat\buZ^{(\widehat\buV,\widehat\bumu)}$ and define the canonical Bellman policy by $\widehat\pi_t^u(x) = \widehat a_t^u \left( \widehat Z_t^u, \widehat V_{t+1}^u, x \right)$, for all $t\in\cT, x\in\cX$.
Then
$(\widehat\buV,\widehat\bumu,\widehat\bupi,\widehat\buZ)$
satisfies~\eqref{eq_sys_V_mu}, and hence
$(\widehat\bupi,\widehat\buZ)$ is a GMFE.

Conversely, any GMFE can be modified at zero-mass state-time pairs
by replacing its policy with the unique Bellman maximizer. This does
not change its mean-field flow, aggregate, or payoff, and the resulting
canonical representative yields a fixed point of $\Psi$. By uniqueness
of this fixed point, every GMFE has the same mean-field flow and
aggregate and agrees with $\widehat\bupi$ on the support of the
equilibrium flow. This proves the result.
\end{proof}

\section{Approximate Equilibrium in Finite Player Games}\label{sec_epsNash}

We consider now a large finite network game with heterogeneous
interactions. Let $N\in\mathbb{N}$ be the number of players.
An $N$-player network game is specified by the tuple
$(\cT,\zeta^N,\cX,\cA,P,r,r_T,\varphi,\mu_{\mathrm{init}})$,
where the interaction structure is given by a symmetric matrix
$\zeta^N=(\zeta_{ij}^N)_{i,j\in[N]}$ and
$\zeta_{ij}^N\in[0,1]$ quantifies the influence of player $j$ on
player $i$. In this section, we assume that
$(\mu_{\mathrm{init}},P,r,r_T,\varphi)$
are the same for every player, and heterogeneity arises only
through the interaction matrix $\zeta^N$.

To study the large-$N$ limit, we associate to each interaction matrix
$\zeta^N$ a step-graphon $G_N$ defined on $I=[0,1]$:
\begin{equation}\label{eq_step_graphon}
	G_N(u,v)
	:=
	\sum_{i,j=1}^N
	\zeta^N_{ij}\,
	\mathds{1}_{u\in\left(\frac{i-1}{N},\frac{i}{N}\right]}
	\mathds{1}_{v\in\left(\frac{j-1}{N},\frac{j}{N}\right]}.
\end{equation}

\begin{assumption}\label{assp_cutnorm_convergence}
	The sequence of step-graphons $(G_N)_{N\in\N}$ converges to a continuous graphon
	$G\in\cW$ in the operator norm $\|\cdot\|_{\infty\to1}$.
\end{assumption}
Note that equivalently, up to universal constants, one may formulate this assumption in the cut norm $\|\cdot\|_\square$.

By \cite[Theorem 11.59]{lovasz2012}, a sufficient condition for
Assumption~\ref{assp_cutnorm_convergence} is that the interaction
matrices $\zeta^N$ arise from a sequence of dense graphs
$(\G_N)_{N\in\N}$ that converges to a continuous graphon $G$ in the
sense of \cite{lovasz2012}.\footnote{
	More precisely, $(\G_N)$ is said to converge to $G$ if, for every
	finite simple graph $F=(V_F,E_F)$, the homomorphism densities
	satisfy $t(F,\G_N)\to t(F,G)$ as $N\to\infty$. Here,
	$t(F,\G_N)$ denotes the probability that a uniformly random map
	$\phi:V_F\to V(\G_N)$ sends every edge of $F$ to an edge of
	$\G_N$, and
	\[
		t(F,G)
		=
		\int_{[0,1]^k}
		\prod_{\{i,j\}\in E_F}
		G(x_i,x_j)\,dx_1\cdots dx_k,
	\]
	with $k=|V_F|$, is the homomorphism density of $F$ into the
	graphon $G$, i.e., the probability that a random map from $V_F$
	to $[0,1]$ preserves all edges of $F$.
}
This is equivalent to the existence of a relabeling of the vertices of
$\G_N$ such that the associated step-graphons $G_N$ satisfy
$\|G_N-G\|_\square\to0$.

We recall that the set of policies for one player is $\bfPi=\cA^{\cT\times\cX}$. A policy $\bpi^i\in\bfPi$, $(t,x)\longmapsto \pi_t^i(x):=\pi^i(t,x)\in\cA$, provides an action for each state $x$ at each decision time $t$ for player $i\in[N]:=\{1,\dots,N\}$. Given policies $\bm{\pi}^{(N)} = (\bpi^1,\dots,\bpi^N) \in\bfPi^N$,
we define the aggregate for player $i$ as
\begin{equation}
	\label{eq:ZNpii}
	Z_t^{N,\bm{\pi}^{(N)},i}
	:=
	\frac{1}{N}
	\sum_{j=1}^N
	\zeta_{ij}^N\,
	\varphi_t
	\left(
		X_t^j,\pi_t^j(X_t^j)
	\right),
	\qquad
	i\in[N],\quad t\in\cT,
\end{equation}
where player $i$ evolves according to
\begin{equation*}
	X_0^i\sim\mu_{\mathrm{init}},
	\qquad
	a_t^i=\pi_t^i(X_t^i),
	\qquad
	X_{t+1}^i
	\sim
	P_t
	\left(
		\cdot\mid
		X_t^i,a_t^i,
		Z_t^{N,\bm{\pi}^{(N)},i}
	\right),
	\qquad
	i\in[N],\quad t\in\cT.
\end{equation*}
Throughout this section, we assume that
$X_0^1,\dots,X_0^N$ are independent with common distribution
$\mu_{\mathrm{init}}$ and that, conditionally on the current state
vector, the players' next-state transitions are independent.
Note that the evolution of the players' states is coupled through the
aggregate. The total reward of player $i$ is defined, for
$\bpi^i\in\bfPi$ and $\bpi^{-i}\in\bfPi^{N-1}$, by
\begin{equation*}
	\cJ^{N,i}(\bpi^i;\bpi^{-i})
	:=
	\E\left[
		\sum_{t\in\cT}
		r_t
		\left(
			X_t^i,a_t^i,
			Z_t^{N,\bm{\pi}^{(N)},i}
		\right)
		+
		r_T(X_T^i)
	\right].
\end{equation*}

\begin{definition}[$(\epsilon,p)$-Nash equilibrium on a large subset]
	Fix $N\in\mathbb{N}$ and $\epsilon,p>0$. A vector of policies $\bm{\widehat\pi}^{(N)} = (\widehat\bpi^1,\dots,\widehat\bpi^N) \in\bfPi^N$
	is an $(\epsilon,p)$-Nash equilibrium on a large subset if there
	exists a subset $\cI_N\subseteq[N]$ with
	$|\cI_N|\geq(1-p)N$ such that
	\[
		\sup_{\widetilde\bpi\in\bfPi}
		\cJ^{N,i}
		(\widetilde\bpi;\widehat\bpi^{-i})
		-
		\cJ^{N,i}
		(\bm{\widehat\pi}^{(N)})
		\leq\epsilon,
		\qquad
		\forall\,i\in\cI_N.
	\]
\end{definition}

We show that, under some regularity assumptions, a GMFE policy in the
GMFG, Definition~\ref{def_Nash_GMFG}, induces an
$(\epsilon,p)$-Nash equilibrium on a large subset of the
$N$-player network game, for any $\epsilon,p>0$ when $N$ is large
enough.

For an $N$-player policy profile
$
	\bm{\pi}^{(N)}
	=
	(\bpi^1,\dots,\bpi^N)
	\in\bfPi^N,
$
we define the corresponding graphon step policy as
\begin{equation*}
	\pi_t^{N,\bm{\pi}^{(N)},u}(x)
	:=
	\sum_{i\in[N]}
	\mathds{1}_{u\in
	\left(\frac{i-1}{N},\frac{i}{N}\right]}
	\pi_t^i(x),
	\qquad
	x\in\cX,\quad u\in I,\quad t\in\cT.
\end{equation*}

Similarly, the associated empirical state distribution can be extended
to all $u\in I$ by
\begin{equation}\label{eq_muN}
	\mu_t^{N,\bm{\pi}^{(N)},u}
	:=
	\sum_{i\in[N]}
	\mathds{1}_{u\in
	\left(\frac{i-1}{N},\frac{i}{N}\right]}
	\delta_{X_t^i},
	\qquad
	u\in I,\quad t\in\ocT.
\end{equation}

Let $L_\pi$ be a positive constant. We denote by
$\buPi^{L_\pi}$ the set of policy profiles admitting a chosen
representative such that, for every $x\in\cX$ and $t\in\cT$, the map
$u\longmapsto\pi_t^u(x)$
is Lipschitz continuous with Lipschitz constant $L_\pi$. Elements of
$\buPi^{L_\pi}$ are used with this representative, so pointwise
evaluation is well-defined. Conversely, from such a regular graphon
policy representative, we can define a policy for the finite-player
game. We define the sampling map
$
	\Lambda^N:
	\buPi^{L_\pi}\longrightarrow\bfPi^N
$
by
\[
	\Lambda^N(\bupi)
	=
	\left(
		\bpi^{\frac1N},
		\bpi^{\frac2N},
		\dots,
		\bpi^{\frac NN}
	\right)
	\in\bfPi^N,
	\qquad
	\bupi\in\buPi^{L_\pi}.
\]

We now state an approximation result for the convergence of the state
and aggregate processes. We recall that the notations
$\bunu^\bupi$ and $\buZ^\bunu$ are defined in
Section~\ref{sec:unique-mono}. This Lipschitz condition is imposed only
on the background graphon policy profile; unilateral finite-player
deviations below range over the full finite-player policy space
$\bfPi$.

\begin{proposition}\label{lemma_XZconvergence}
	Suppose
	Assumptions~\ref{aasp_graphon-assumption}--
	\ref{assp_varphi_lip}
	and~\ref{assp_cutnorm_convergence} hold. Let
	$\bupi\in\buPi^{L_\pi}$.
	Let $C_h,L_h>0$. Define $\cH$ as the set of functions
	$h:\cX\to\R$ bounded by $C_h$, and let
	$\widetilde\cH$ be the family of measurable functions
$\mathfrak h: \cX\times\RR\longrightarrow\R$
	bounded by $C_h$ and Lipschitz continuous with respect to the
	second argument, with Lipschitz constant $L_h$ uniformly in
	$x\in\cX$.

	For any $\epsilon,p>0$, there exists $N_0$ such that, for all
	$N\geq N_0$, there exists a subset
	$\cI_N\subseteq[N]$ with
	$|\cI_N|\geq(1-p)N$ such that, for every $i\in\cI_N$, the
	following two estimates hold simultaneously:
	\begin{equation}\label{eq_X_convergence}
		\sup_{\widetilde\bpi\in\bfPi}
		\sup_{h\in\cH}
		\left|
			\E\left[h(X_t^i)\right]
			-
			\E\left[h(X_t^{\ioN})\right]
		\right|
		<\epsilon,
		\qquad
		\forall\,t\in\ocT,
	\end{equation}
	and
	\begin{equation}\label{eq_XZ_convergence}
		\sup_{\widetilde\bpi\in\bfPi}
		\sup_{\mathfrak h\in\widetilde\cH}
		\left|
			\E\left[
				\mathfrak h
				\left(
					X_t^i,
					Z_t^{N,(\widetilde\bpi,
					\Lambda^N(\bupi)^{-i}),i}
				\right)
			\right]
			-
			\E\left[
				\mathfrak h
				\left(
					X_t^{\ioN},
					Z_t^{\bunu^\bupi,\ioN}
				\right)
			\right]
		\right|
		<\epsilon,
		\qquad
		\forall\,t\in\cT.
	\end{equation}

	Here, $X^i$ denotes the state of player $i$ in the $N$-player
	game when player $i$ uses $\widetilde\bpi\in\bfPi$ and the
	other players use $\Lambda^N(\bupi)^{-i}$, while
	$X^{\ioN}$ denotes the state of the player with label $i/N$ in
	the graphon game when this player uses $\widetilde\bpi$ and the
	population uses $\bupi$. Their respective dynamics are
	\begin{align*}
		X_0^i
		&\sim\mu_{\mathrm{init}},
		\qquad
		a_t^i=\widetilde\pi_t(X_t^i),
		\qquad
		X_{t+1}^i
		\sim
		P_t
		\left(
			\cdot\mid
			X_t^i,a_t^i,
			Z_t^{N,(\widetilde\bpi,
			\Lambda^N(\bupi)^{-i}),i}
		\right),
		\qquad t\in\cT,
		\\
		X_0^{\ioN}
		&\sim\mu_{\mathrm{init}},
		\qquad
		a_t^{\ioN}
		=
		\widetilde\pi_t(X_t^{\ioN}),
		\qquad
		X_{t+1}^{\ioN}
		\sim
		P_t
		\left(
			\cdot\mid
			X_t^{\ioN},a_t^{\ioN},
			Z_t^{\bunu^\bupi,\ioN}
		\right),
		\qquad t\in\cT.
	\end{align*}
\end{proposition}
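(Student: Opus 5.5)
The plan is to run a joint induction on $t\in\ocT$ that proves both estimates. At each time, the law of $X_t^i$ in the $N$-player game, when player $i$ deviates to $\widetilde\bpi$, is compared with the law of $X_t^{\ioN}$ in the graphon game. The quantity to control is
\[
\Delta_t^i:=\sup_{\widetilde\bpi\in\bfPi}\sum_{x\in\cX}\bigl|\mathbb P(X_t^i=x)-\mathbb P(X_t^{\ioN}=x)\bigr|,
\]
since \eqref{eq_X_convergence} follows from $\Delta_t^i\le \epsilon/C_h$. At $t=0$ both laws equal $\mu_{\mathrm{init}}$, so $\Delta_0^i=0$. For the step $t\to t+1$, I write
\[
\mathbb P(X_{t+1}^i=x)=\E\bigl[P_t(x\mid X_t^i,\widetilde\pi_t(X_t^i),Z_t^{N,i})\bigr],
\]
using the conditional independence of transitions. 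The map $(y,z)\mapsto P_t(x\mid y,\widetilde\pi_t(y),z)$ is bounded by $1$ and $L_{P,Z}$-Lipschitz in $z$ by Assumption~\ref{assp_P_lip}, uniformly in $\widetilde\pi$. It therefore belongs to $\widetilde\cH$ with $C_h=1$ and $L_h=L_{P,Z}$. So \eqref{eq_X_convergence} at time $t+1$ reduces to \eqref{eq_XZ_convergence} at time $t$, summed over $x$.

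The core is to prove \eqref{eq_XZ_convergence} at time $t$, given control of the state laws of all players up to time $t$. I split
\[
\mathfrak h(X_t^i,Z_t^{N,i})-\mathfrak h(X_t^{\ioN},Z_t^{\bunu^\bupi,\ioN})
\]
into two parts. The first is $\mathfrak h(X_t^i,Z_t^{N,i})-\mathfrak h(X_t^i,Z_t^{\bunu^\bupi,\ioN})$, which is at most $L_h\E|Z_t^{N,i}-Z_t^{\bunu^\bupi,\ioN}|$. The second is $\E[\mathfrak h(X_t^i,z_*)]-\E[\mathfrak h(X_t^{\ioN},z_*)]$ with the deterministic $z_*=Z_t^{\bunu^\bupi,\ioN}$, which is at most $C_h\Delta_t^i$. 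For the aggregate error I introduce the intermediate quantity
\[
\bar Z_t^{i}:=\frac1N\sum_{j\neq i}\zeta^N_{ij}\sum_x\varphi_t(x,\pi_t^{j/N}(x))\,\mu_t^{j/N}(x),
\]
where $\mu^{j/N}=\mu^{\bupi,j/N}$ is the graphon flow, and then bound three errors.

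\begin{enumerate}
\item \textbf{Own term and fluctuation.} The own term $j=i$ contributes $C_\varphi/N$. The remaining fluctuation $\E|Z_t^{N,i}-\E[Z_t^{N,i}\mid\ldots]|$ is a law-of-large-numbers error of order $C_\varphi/\sqrt N$. The difficulty is that the $X_t^j$ are not independent, since they are coupled through aggregates. I plan to handle this by a propagation-of-chaos style induction: at each step, compare the $N$-player system with an auxiliary system of independent chains driven by the deterministic graphon aggregates $Z^{\bunu^\bupi,j/N}$, both driven by common uniforms. A coupling argument then gives $\frac1N\sum_j\mathbb P(X_t^j\neq\bar X_t^j)\to0$, and the independent chains obey the $O(N^{-1/2})$ variance bound.
\item \textbf{Graphon approximation.} The deterministic gap is $|\bar Z_t^i-Z_t^{\bunu^\bupi,\ioN}|$. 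Since $u\mapsto\pi_t^u(x)$ is Lipschitz (as $\bupi\in\buPi^{L_\pi}$), and $\mu_t^u$, $\varphi_t^u$ and $G$ are continuous in $u$ by Assumptions~\ref{aasp_graphon-assumption}, \ref{asm:varphi-regularity}, \ref{assp_u_cont}, with moduli as in the proof of Theorem~\ref{thm_Vmu}, I can replace the step function in $v$ by the continuous integrand at a cost of $o(1)$. What remains is $|(G_N-G)g_t(\ioN)|$ with $g_t$ bounded by $C_\varphi$. Assumption~\ref{assp_cutnorm_convergence} gives only $\|(G_N-G)g_t\|_{L^1}\le C_\varphi\|G_N-G\|_{\infty\to1}\to0$. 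By Markov's inequality, the set of $i$ for which the cell average of $|(G_N-G)g_t|$ exceeds $\eta$ has size at most $N\|G_N-G\|_{\infty\to1}C_\varphi/\eta$. Taking the union over the finitely many $t$ and excluding these indices defines $\cI_N$, with $|\cI_N|\ge(1-p)N$ for large $N$.
\item \textbf{Deviation has vanishing influence.} The deviation of player $i$ changes only its own term, of order $1/N$. Its effect on the other players' states propagates only through their aggregates, each perturbed by at most $C_\varphi/N$. By the Lipschitz continuity of $P$ in $Z$, this changes the others' laws by $O(1/N)$ per step, uniformly in $\widetilde\bpi$, which makes the suprema over $\widetilde\bpi$ harmless.
\end{enumerate}

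The main obstacle is step 1 combined with the uniformity: establishing the coupling between the interacting $N$-player chains and the independent graphon chains uniformly over $i\in\cI_N$ and $\widetilde\bpi$. I would prove by induction on $t$ that $\frac1N\sum_j\mathbb P(X_t^j\neq\bar X_t^j)\le\delta_t(N)$ with $\delta_t(N)\to0$. The induction step uses a maximal coupling of the two transition kernels, whose total variation distance is at most $|\cX|L_{P,Z}|Z_t^{N,j}-Z_t^{\bunu^\bupi,j/N}|$. For the aggregate discrepancy I would bound it on average over $j$ by the $L^1$ cut-norm bound, which avoids needing the exceptional set inside the induction, and then pass to the exceptional-set estimate only at the end to obtain the pointwise statement for $i\in\cI_N$. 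Finally, $N_0$ is chosen so that all the error terms, which are finite in number since $T$ is finite, sum to less than $\epsilon$.
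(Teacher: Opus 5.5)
Your proposal is correct. Its outer skeleton matches the paper's: the paper also derives \eqref{eq_X_convergence} at $t+1$ from \eqref{eq_XZ_convergence} at $t$ via $\mathfrak h(x,z)=\sum_{y}h(y)P_t(y\mid x,\widetilde\pi_t(x),z)$. It likewise splits \eqref{eq_XZ_convergence} into $L_h\E|Z_t^{N,i}-Z_t^{\bunu^\bupi,\ioN}|$ plus a frozen-aggregate state term, and builds $\cI_N$ by Markov's inequality on the averaged cut-norm error (Lemma~\ref{lemma_ZtoZ_convergence}). The genuine difference is the law-of-large-numbers step. The paper never couples trajectories. Instead, Lemma~\ref{lemma_muNtomu} proves by induction that $\E|\umu_t^{N,i}(f)-\umu_t(f)|\to0$ uniformly over bounded label-dependent test functions $f$. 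It controls the one-step fluctuation by conditional independence given $\bm X_t$, and the drift by Lipschitz bounds, the cut norm, and the induction hypothesis applied to new test functions. Since the graphon flow enters only through integrals over $u$, no regularity of $u\mapsto\mu_t^{\bupi,u}$ is ever needed. Your synchronous coupling with independent chains $\bar X^j$ gives a stronger, trajectory-level statement with explicit rates. The price is that it turns the deterministic term into the Riemann-type sum $\frac1N\sum_k\zeta^N_{ik}g_t(k/N)$, so you do need $u\mapsto\mu_t^{\bupi,u}$ to be continuous. That holds here, but not for the reasons you give. Assumption~\ref{assp_u_cont} is not among the hypotheses, and the moduli in the proof of Theorem~\ref{thm_Vmu} concern greedy policies. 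Continuity instead follows by a direct forward induction: in this section $P$ and $\mu_{\mathrm{init}}$ are label-independent, $\bupi$ is $L_\pi$-Lipschitz, and $|Z_t^{\bupi,u}-Z_t^{\bupi,u'}|\le C_\varphi\omega^G(|u-u'|)$. When you write it up, record two further details. First, passing from the cell average to the label $\ioN$ in the first argument of $G$ costs an extra $O(\omega^G(1/N))$ (the paper's Term~1.a). Second, check that the $\bar X^j$ stay independent even though the maximal coupling uses the random $Z_t^{N,j}$. They do, because conditionally on the past the $\bar X^j_{t+1}$ are independent with kernels depending only on $\bar X^j_t$; alternatively, use a quantile coupling so that $\bar X^j_{t+1}$ is a function of $(\bar X^j_t,U^j_{t+1})$ alone.
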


\begin{proof}
	See Section~\ref{proof:lemma_XZconvergence}.
\end{proof}

It is convenient to rewrite the total reward~\eqref{eq:reward} in terms
of the state-action distribution profile. Let $\bunu = (\nu_t^u)_{u\in I,t\in\cT}$, with $\nu_t^u\in\cP(\cX\times\cA)$,
and let $\buZ^\bunu$ be the corresponding graphon-weighted aggregate
defined in~\eqref{def_Znu}. The total reward of a representative player
with label $u\in I$ under policy $\bpi^u$ is
\begin{equation*}
	\cJ^{\bunu,u}(\bpi^u)
	:=
	\E\left[
		\sum_{t\in\cT}
		r_t
		\left(
			X_t^u,a_t^u,
			Z_t^{\bunu,u}
		\right)
		+
		r_T(X_T^u)
	\right],
	\qquad
	u\in I,
\end{equation*}
subject to
\begin{equation*}
	X_0^u\sim\mu_{\mathrm{init}},
	\qquad
	a_t^u=\pi_t^u(X_t^u),
	\qquad
	X_{t+1}^u
	\sim
	P_t
	\left(
		\cdot\mid
		X_t^u,a_t^u,
		Z_t^{\bunu,u}
	\right),
	\qquad
	t\in\cT.
\end{equation*}

We have the following approximation result, which states that unilateral
deviations in the finite-player game and in the graphon game yield very
similar values.

\begin{theorem}\label{thm_JioN_Jmu}
	Suppose
	Assumptions~\ref{aasp_graphon-assumption}--
	\ref{assp_varphi_lip}
	and~\ref{assp_cutnorm_convergence} hold. Let
	$\bupi\in\buPi^{L_\pi}$. Let
	$\bunu^\bupi = (\nu_t^{\bupi,u})_{u\in I,t\in\cT}$, $\nu_t^{\bupi,u}\in\cP(\cX\times\cA)$, be the corresponding state-action distribution profile generated
	by $\bupi$.

	For any $\epsilon,p>0$, there exists $N_0$ such that, for all
	$N\geq N_0$, there exists a subset
	$\cI_N\subseteq[N]$ with
	$|\cI_N|\geq(1-p)N$ such that, for every $i\in\cI_N$,
	\[
		\sup_{\widetilde\bpi\in\bfPi}
		\left|
			\cJ^{N,i}
			(\widetilde\bpi;\Lambda^N(\bupi)^{-i})
			-
			\cJ^{\bunu^\bupi,\ioN}
			(\widetilde\bpi)
		\right|
		<\epsilon.
	\]
\end{theorem}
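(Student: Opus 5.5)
The plan is to reduce Theorem~\ref{thm_JioN_Jmu} directly to Proposition~\ref{lemma_XZconvergence}, applied with a suitable choice of the test functions $h$ and $\mathfrak h$. Both values are finite sums of expectations. Writing $X^i$ for the state of player $i$ under the deviation $\widetilde\bpi$ against $\Lambda^N(\bupi)^{-i}$, and $X^{\ioN}$ for the graphon player with label $i/N$ using $\widetilde\bpi$ against $\bunu^\bupi$, we have
\[
	\cJ^{N,i}(\widetilde\bpi;\Lambda^N(\bupi)^{-i})
	=
	\sum_{t\in\cT}\E\Bigl[r_t\bigl(X_t^i,\widetilde\pi_t(X_t^i),Z_t^{N,(\widetilde\bpi,\Lambda^N(\bupi)^{-i}),i}\bigr)\Bigr]
	+\E\bigl[r_T(X_T^i)\bigr],
\]
and analogously
\[
	\cJ^{\bunu^\bupi,\ioN}(\widetilde\bpi)
	=
	\sum_{t\in\cT}\E\Bigl[r_t\bigl(X_t^{\ioN},\widetilde\pi_t(X_t^{\ioN}),Z_t^{\bunu^\bupi,\ioN}\bigr)\Bigr]
	+\E\bigl[r_T(X_T^{\ioN})\bigr].
\]
These are exactly the two processes compared in Proposition~\ref{lemma_XZconvergence}.

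For each $t\in\cT$ and each deviation $\widetilde\bpi$, we set $\mathfrak h_t^{\widetilde\bpi}(x,z):=r_t(x,\widetilde\pi_t(x),z)$. By Assumption~\ref{assp_diff_concave_faz}, $|\mathfrak h_t^{\widetilde\bpi}|\le C_r$. By Assumption~\ref{assp_P_lip}\ref{assp_z_to_r_lip}, it is Lipschitz in $z$ with constant $L_{r,Z}$, uniformly in $x$ and, crucially, uniformly in $\widetilde\bpi$. Hence all these functions lie in a single class $\widetilde\cH$ with $C_h=C_r$ and $L_h=L_{r,Z}$. For the terminal reward we take $h=r_T\in\cH$ with the same bound.

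Fix $\epsilon,p>0$ and apply Proposition~\ref{lemma_XZconvergence} with tolerance $\epsilon' := \epsilon/(T+2)$ and the same $p$. This yields $N_0$ and, for each $N\ge N_0$, a set $\cI_N$ with $|\cI_N|\ge(1-p)N$ on which both \eqref{eq_X_convergence} and \eqref{eq_XZ_convergence} hold simultaneously. Because the proposition's estimate is uniform over $\widetilde\bpi\in\bfPi$ and over $\mathfrak h\in\widetilde\cH$, we may insert $\mathfrak h=\mathfrak h_t^{\widetilde\bpi}$, where the test function itself depends on $\widetilde\bpi$. By the triangle inequality, for $i\in\cI_N$ and every $\widetilde\bpi$,
\[
	\bigl|\cJ^{N,i}(\widetilde\bpi;\Lambda^N(\bupi)^{-i})-\cJ^{\bunu^\bupi,\ioN}(\widetilde\bpi)\bigr|
	\le T\epsilon'+\epsilon'<\epsilon.
\]
Taking the supremum over $\widetilde\bpi$ concludes the proof.

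The only point needing care is this coupling between the test function and the deviation. The class $\widetilde\cH$ requires only measurability in $x$ and Lipschitz continuity in the second argument, and $\widetilde\cH$ does not depend on $\widetilde\bpi$, so the joint supremum in \eqref{eq_XZ_convergence} covers it. One could worry whether the proposition's supremum is really taken jointly over $\widetilde\bpi$ and $\mathfrak h$. If it were only for each fixed $\mathfrak h$, I would fall back on its proof, where the bounds depend on $\mathfrak h$ only through $C_h$ and $L_h$. As stated, though, the suprema are joint, so no further work is needed. It also matters that the set $\cI_N$ is the same for all times and all test functions, which the proposition guarantees.
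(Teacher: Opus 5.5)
Your proposal is correct and essentially matches the paper's proof. The paper also applies Proposition~\ref{lemma_XZconvergence} with $C_h=C_r$ and $L_h=L_{r,Z}$ to the test functions $(x,z)\mapsto r_t(x,\widetilde\pi_t(x),z)$ and $r_T$, uses uniformity in $\widetilde\bpi$, and sums $T+1$ error terms, with tolerance $\epsilon/(T+1)$ in place of your $\epsilon/(T+2)$.
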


\begin{proof}
Define the shorthand notation $r_t^\pi(x,z) := r_t(x,\pi(x),z)$, $\pi\in\cA^\cX$. Fix $\epsilon,p>0$ and set $\delta := \frac{\epsilon}{T+1}$.
	By Proposition~\ref{lemma_XZconvergence}, applied with
	$C_h=C_r$, $L_h=L_{r,Z}$, and tolerance $\delta$, there exists
	$N_0$ such that, for all $N\geq N_0$, there exists a subset
	$\cI_N\subseteq[N]$ with
	$|\cI_N|\geq(1-p)N$ for which
	\eqref{eq_X_convergence} and~\eqref{eq_XZ_convergence} hold
	simultaneously for every $i\in\cI_N$ and all indicated times.

	For every $i\in\cI_N$ and every
	$\widetilde\bpi\in\bfPi$, the triangle inequality yields
\begin{align*}
	&
	\left|\,
	\cJ^{N,i}
	(\widetilde\bpi;\Lambda^N(\bupi)^{-i})
	-
	\cJ^{\bunu^\bupi,\ioN}(\widetilde\bpi)
	\,\right|
	\\
	&=
	\Bigg|
	\E\Bigg[
		\sum_{t\in\cT}
		r_t
		\left(
			X_t^i,
			\widetilde\pi_t(X_t^i),
			Z_t^{N,(\widetilde\bpi,\Lambda^N(\bupi)^{-i}),i}
		\right)
		+
		r_T(X_T^i)
	\Bigg]
	\\
	&\qquad\qquad-
	\E\Bigg[
		\sum_{t\in\cT}
		r_t
		\left(
			X_t^{\ioN},
			\widetilde\pi_t(X_t^{\ioN}),
			Z_t^{\bunu^\bupi,\ioN}
		\right)
		+
		r_T(X_T^{\ioN})
	\Bigg]
	\Bigg|
	\\
	&=
	\Bigg|
	\E\Bigg[
		\sum_{t\in\cT}
		r_t^{\widetilde\pi_t}
		\left(
			X_t^i,
			Z_t^{N,(\widetilde\bpi,\Lambda^N(\bupi)^{-i}),i}
		\right)
		+
		r_T(X_T^i)
	\Bigg]
	\\
	&\qquad\qquad-
	\E\Bigg[
		\sum_{t\in\cT}
		r_t^{\widetilde\pi_t}
		\left(
			X_t^{\ioN},
			Z_t^{\bunu^\bupi,\ioN}
		\right)
		+
		r_T(X_T^{\ioN})
	\Bigg]
	\Bigg|
	\\
	&\leq
	\sum_{t\in\cT}
	\left|
		\E\left[
			r_t^{\widetilde\pi_t}
			\left(
				X_t^i,
				Z_t^{N,(\widetilde\bpi,\Lambda^N(\bupi)^{-i}),i}
			\right)
		\right]
		-
		\E\left[
			r_t^{\widetilde\pi_t}
			\left(
				X_t^{\ioN},
				Z_t^{\bunu^\bupi,\ioN}
			\right)
		\right]
	\right|
	\\
	&\qquad+
	\left|
		\E\left[r_T(X_T^i)\right]
		-
		\E\left[r_T(X_T^{\ioN})\right]
	\right|
	\\
	&<
	T\delta+\delta
	=
	\epsilon.
\end{align*}
Indeed, for every $t\in\cT$, the function
$(x,z)\longmapsto r_t^{\widetilde\pi_t}(x,z)$
is bounded by $C_r$ and is Lipschitz continuous with respect to $z$
with Lipschitz constant $L_{r,Z}$, uniformly in
$\widetilde\bpi\in\bfPi$. Hence, each running-reward difference is bounded
using~\eqref{eq_XZ_convergence}. The terminal reward
$x\mapsto r_T(x)$ is bounded by $C_r$, so the terminal difference is
bounded using~\eqref{eq_X_convergence} at time $T$.

Since the subset $\cI_N$ and all the preceding estimates are uniform
with respect to $\widetilde\bpi\in\bfPi$, taking the supremum over
	$\widetilde\bpi$ proves the result.
\end{proof}

As a corollary of Theorem~\ref{thm_JioN_Jmu}, we obtain the main approximation result of this section.

\begin{theorem}\label{thm_epsNash}
	Suppose Assumptions~\ref{aasp_graphon-assumption}-\ref{assp_varphi_lip}
	and~\ref{assp_cutnorm_convergence} hold.
	Let $\widehat\bupi\in\buPi^{L_\pi}$ be a graphon mean-field equilibrium policy.
	For any $\epsilon,p>0$, there exists $N_0$ such that for all
	$N\geq N_0$, $\Lambda^N(\widehat\bupi)$ is an
	$(\epsilon,p)$-Nash equilibrium for the $N$-player game, i.e.,
	there exists a subset $\cI_N\subseteq[N]$ of indices with
	cardinality $|\cI_N|\geq(1-p)N$ such that
	\[
		\sup_{\widetilde\bpi\in\bfPi}
		\cJ^{N,i}
		(\widetilde\bpi;\Lambda^N(\widehat\bupi)^{-i})
		-
		\cJ^{N,i}(\Lambda^N(\widehat\bupi))
		\leq\epsilon,
		\qquad
		\forall\,i\in\cI_N.
	\]
\end{theorem}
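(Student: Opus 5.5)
The plan is to deduce Theorem~\ref{thm_epsNash} from Theorem~\ref{thm_JioN_Jmu} with a standard $\epsilon/3$-type argument: sandwich the finite-player payoffs between graphon payoffs at labels $u=i/N$, and use the equilibrium property of $\widehat\bupi$ at those labels. Fix $\epsilon,p>0$. Apply Theorem~\ref{thm_JioN_Jmu} to $\bupi=\widehat\bupi\in\buPi^{L_\pi}$ with tolerance $\epsilon/2$ and the same $p$. This gives $N_0$ and, for each $N\ge N_0$, a set $\cI_N$ with $|\cI_N|\ge(1-p)N$ such that, for every $i\in\cI_N$,
\[
\sup_{\widetilde\bpi\in\bfPi}\left|\cJ^{N,i}(\widetilde\bpi;\Lambda^N(\widehat\bupi)^{-i})-\cJ^{\bunu^{\widehat\bupi},\ioN}(\widetilde\bpi)\right|<\frac{\epsilon}{2}.
\]
Taking $\widetilde\bpi=\widehat\bpi^{i/N}$ gives $(\widehat\bpi^{i/N},\Lambda^N(\widehat\bupi)^{-i})=\Lambda^N(\widehat\bupi)$, so the same bound controls $|\cJ^{N,i}(\Lambda^N(\widehat\bupi))-\cJ^{\bunu^{\widehat\bupi},\ioN}(\widehat\bpi^{i/N})|$.

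Next, identify $\cJ^{\bunu^{\widehat\bupi},u}(\cdot)$ with $J^u(\cdot;\bZ^{\widehat\bupi,u})$, using the convention $Z^{\bunu^\bupi}=Z^\bupi$ of Section~\ref{sec:unique-mono}. The GMFE property (Definition~\ref{def_Nash_GMFG}) then gives $\cJ^{\bunu^{\widehat\bupi},u}(\widetilde\bpi)\le\cJ^{\bunu^{\widehat\bupi},u}(\widehat\bpi^u)$ for every $\widetilde\bpi$ and a.e.\ $u$. Chaining the estimates, for $i\in\cI_N$,
\[
\cJ^{N,i}(\widetilde\bpi;\Lambda^N(\widehat\bupi)^{-i})<\cJ^{\bunu^{\widehat\bupi},\ioN}(\widetilde\bpi)+\tfrac{\epsilon}{2}\le\cJ^{\bunu^{\widehat\bupi},\ioN}(\widehat\bpi^{i/N})+\tfrac{\epsilon}{2}<\cJ^{N,i}(\Lambda^N(\widehat\bupi))+\epsilon .
\]
Taking the supremum over $\widetilde\bpi$ then concludes.

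\textbf{Main obstacle.} Optimality in Definition~\ref{def_Nash_GMFG} holds only for a.e.\ $u$, whereas we need it at the specific labels $u=i/N$, a null set. I would first verify that the chosen Lipschitz representative of $\widehat\bupi$ is optimal at every label. The map $u\mapsto\cJ^{\bunu^{\widehat\bupi},u}(\widetilde\bpi)$ is continuous, uniformly in $\widetilde\bpi$: here the coefficients are $u$-independent, and $u\mapsto Z_t^{\widehat\bupi,u}$ has modulus $C_\varphi\omega^G$ by Assumption~\ref{aasp_graphon-assumption}, as in the proof of Theorem~\ref{thm_Vmu}, provided we use the integral representative of $Z$. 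Hence $u\mapsto\sup_{\widetilde\bpi}\cJ^{\bunu^{\widehat\bupi},u}(\widetilde\bpi)$ is continuous. Moreover, $u\mapsto\cJ^{\bunu^{\widehat\bupi},u}(\widehat\bpi^u)$ is continuous, because $\widehat\bpi^u$ is $L_\pi$-Lipschitz in $u$ and $P,r$ are Lipschitz in $a$ and $Z$ (Assumption~\ref{assp_P_lip}). Two continuous functions that are equal a.e.\ are equal everywhere, so optimality holds at every label.

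If this pointwise upgrade is regarded as part of the hypothesis that $\widehat\bupi$ is taken with a representative that is optimal at the sampled labels, as the paper's remarks on representatives suggest, then this step is immediate. Alternatively, one can discard from $\cI_N$ the labels where optimality fails, but that set may be all of the $i/N$, so the continuity argument is the safer route.
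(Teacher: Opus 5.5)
Your proposal is correct and follows the paper's proof. You upgrade a.e.\ optimality of the Lipschitz representative to every label, in particular $u=i/N$, by continuity of the regret in $u$, and then sandwich the finite-player payoffs using Theorem~\ref{thm_JioN_Jmu}. The differences are minor: one application of Theorem~\ref{thm_JioN_Jmu} already suffices, since its index set is uniform in $\widetilde\bpi$ and so covers $\widetilde\bpi=\widehat\bpi^{i/N}$ (the paper applies it twice with $p/2$); your modulus uniform in $\widetilde\bpi$ replaces the paper's use of compactness of $\bfPi$ and the maximum theorem; and for continuity of $u\mapsto\cJ^{\bunu^{\widehat\bupi},u}(\widehat\bpi^u)$, $r$ is only continuous in $a$ (via Assumption~\ref{assp_diff_concave_faz}), not Lipschitz by Assumption~\ref{assp_P_lip}, which is still enough.
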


\begin{proof}
Let
$
	\bunu
	=
	(\nu_t^{\widehat\bupi,\widehat\bumu,u})_{u\in I,t\in\cT},
$
with
$\nu_t^{\widehat\bupi,\widehat\bumu,u}\in\cP(\cX\times\cA)$,
denote the state-action distribution profile associated with the
equilibrium policy $\widehat\bupi$, where $\widehat\bumu$ is the corresponding
state-distribution sequence induced from the initial distribution
$\mu_{\mathrm{init}}$.

We first verify that the chosen Lipschitz representative $\widehat\bupi$ is
optimal at every label $u\in I$, including the sampled labels $i/N$.
Define
\[
	R(u)
	:=
	\sup_{\widetilde\bpi\in\bfPi}
	\left[
		\cJ^{\bunu,u}(\widetilde\bpi)
		-
		\cJ^{\bunu,u}(\widehat\bupi^u)
	\right],
	\qquad u\in I.
\]
By Assumption~\ref{aasp_graphon-assumption} and the boundedness of
$\varphi$,
\[
	|Z_t^{\bunu,u}-Z_t^{\bunu,v}|
	\leq
	C_\varphi
	\|G(u,\cdot)-G(v,\cdot)\|_{L^1(I)}
	\leq
	C_\varphi\omega^G(|u-v|),
\]
so the map $u\mapsto Z_t^{\bunu,u}$ is continuous for every
$t\in\cT$. Since the state space and the horizon are finite, the
transition kernel and rewards are continuous in the action and
Lipschitz continuous in the aggregate, and
$u\mapsto\widehat\bpi^u$ is Lipschitz continuous, it follows by induction
that
$
	(u,\widetilde\bpi)
	\longmapsto
	\cJ^{\bunu,u}(\widetilde\bpi)
$
is continuous on $I\times\bfPi$. Moreover,
$\bfPi=\cA^{\cT\times\cX}$ is compact. Hence, by the maximum theorem,
$R$ is continuous on $I$.
By the GMFE property, $R(u)=0$ for a.e.\ $u\in I$. Since $R$ is
continuous and nonnegative, we conclude that $R(u)=0$, for all $u\in I$.

In particular, $\widehat\bupi^{\ioN}$ is optimal against the equilibrium
aggregate at the sampled label $i/N$.

By Theorem~\ref{thm_JioN_Jmu}, applied twice with tolerance
$\epsilon/2$ and exceptional proportion $p/2$, there exist subsets
$\cI_N^{(1)},\cI_N^{(2)}\subseteq[N]$ satisfying
\[
	|\cI_N^{(1)}|
	\geq
	\left(1-\frac p2\right)N,
	\qquad
	|\cI_N^{(2)}|
	\geq
	\left(1-\frac p2\right)N,
\]
such that the first and third terms below are bounded by $\epsilon/2$
on $\cI_N^{(1)}$ and $\cI_N^{(2)}$, respectively. Define $\cI_N := \cI_N^{(1)}\cap\cI_N^{(2)}$.
Since
\[
	|\cI_N|
	\geq
	N-
	|\cI_N^{(1)c}|-
	|\cI_N^{(2)c}|
	\geq
	(1-p)N,
\]
the following estimates hold simultaneously for every
$i\in\cI_N$:
\begin{align*}
	\sup_{\widetilde\bpi\in\bfPi}&
	\cJ^{N,i}
	(\widetilde\bpi;\Lambda^N(\widehat\bupi)^{-i})
	-
	\cJ^{N,i}(\Lambda^N(\widehat\bupi))
\leq
	\sup_{\widetilde\bpi\in\bfPi}
	\left(
		\cJ^{N,i}
		(\widetilde\bpi;\Lambda^N(\widehat\bupi)^{-i})
		-
		\cJ^{\bunu,\ioN}(\widetilde\bpi)
	\right)
	\\
	&\quad+
	\sup_{\widetilde\bpi\in\bfPi}
	\left(
		\cJ^{\bunu,\ioN}(\widetilde\bpi)
		-
		\cJ^{\bunu,\ioN}(\widehat\bupi^{\ioN})
	\right)
+
	\left(
		\cJ^{\bunu,\ioN}(\widehat\bupi^{\ioN})
		-
		\cJ^{N,i}(\Lambda^N(\widehat\bupi))
	\right)
\leq
	\frac{\epsilon}{2}
	+
	0
	+
	\frac{\epsilon}{2}
	=
	\epsilon.
\end{align*}
The first inequality follows from the triangle inequality. The first
and third terms are bounded by
Theorem~\ref{thm_JioN_Jmu}; for the third term, we apply the theorem
with $\widetilde\bpi=\widehat\bupi^{\ioN}$. The middle term vanishes
because $R(\ioN)=0$.
\end{proof}

\section{Numerical Example}
\label{sec:num1}

\subsection{Model}

We extend the classical optimal execution problem~\cite{almgren2001optimal} to a GMFG. Similar models have been considered in the MFG literature, see e.g.~\cite[Vol. 1, Section 4.7.1]{carmonadelarue2018} and the references therein. In the present model, the traders are indexed by $u\in I$, with interactions governed by a graphon $G$.

\noindent{\textbf{Agent Dynamics and Markov Decision Process Approximation.}}
An infinitesimal trader with index $u \in [0,1]$ has an inventory $X_t^u$ evolving according to:
\begin{equation*}
	dX_t^u = \alpha_t^u dt + \sigma dW_t^u,
\end{equation*}
where $X_t^u$ is her inventory at time $t\in[0,T]$, $\alpha_t^u$ is her trading rate, and $(W_t^u)_{t\in[0,T]}$ is an idiosyncratic Brownian motion.

Motivated by this continuous-time model, the numerical experiments use a
finite-state, discrete-time controlled Markov chain on the grid
$\mathcal{X}=\{x_{\min},x_{\min}+h,\dots,x_{\max}\}$, with bounded action
set $\mathcal A$. For a fixed decision step $\Delta t=T/N_t$, the
implementation first sets, at interior grid points,
\[
	\widetilde p_+(\alpha)
	=
	\frac{\sigma^2\Delta t}{2h^2}
	+
	\frac{\alpha\Delta t}{2h},
	\qquad
	\widetilde p_-(\alpha)
	=
	\frac{\sigma^2\Delta t}{2h^2}
	-
	\frac{\alpha\Delta t}{2h}.
\]
We then set $p_+=\max\{\widetilde p_+,0\}$ and
$p_-=\max\{\widetilde p_-,0\}$. If $p_++p_->1$, both probabilities are
renormalized by their sum. The transition probabilities are
\[
	P(x+h\mid x,\alpha)=p_+,\qquad
	P(x-h\mid x,\alpha)=p_-,\qquad
	P(x\mid x,\alpha)=1-p_+-p_-.
\]
At the boundaries, transition mass that would leave the grid is added to
the probability of staying at the current boundary state. This Markov chain
is used for the numerical illustration of the discrete GMFG model.

\noindent{\textbf{Interactions and Reward Functions.}}
In this network-dependent model, the price impact felt by trader $u$ is a weighted average of the trading rates of the other traders $v$, with weights determined by the graphon $G(u,v)$. Here, $G(u,v)$ represents the strength of the price interaction between traders at positions $u$ and $v$. Intuitively, this network effect can model situations in which traders who are closer in the financial network, for example because they use the same prime broker or trade assets in the same sector, are more sensitive to one another's order flows.

The interactions therefore occur through the controls, and the interaction function is
\[
	\varphi_t(x,a)=a.
\]
Accordingly, for a policy profile $\boldsymbol{\alpha}$ and its induced state-distribution flow $\boldsymbol{\mu}$, the aggregate perceived by trader $u$ is $Z_t^u = \int_I G(u,v) \sum_{x\in\mathcal X} \alpha_t^v(x)\mu_t^v(x)\,dv$ for  $t\in\mathcal T$. 
Thus, $Z_t^u$ is the graphon-weighted expected trading rate of the traders interacting with trader $u$.

For each decision time $t\in\mathcal T$, the running reward of trader $u\in I$ is 
$$r_t^u(x,a,z) = -\left( \frac{c_\alpha}{2}a^2-\gamma xz \right)\Delta t,$$
where $\Delta t=T/N_t$ is the scaling applied to the running rewards over the $N_t$ decision stages. The first term represents the trading cost and captures increasing marginal costs of trading more rapidly, while the second term represents the price impact generated by the other traders. If nearby traders buy more, prices increase; otherwise, they decrease. These price changes affect the mark-to-market value of the trader's inventory.
The terminal reward is $r_T^u(x)=-\frac{c_T}{2}x^2$,
which penalizes inventory remaining at the terminal time.

\subsection{Computational Method and Parameters}

To compute the solution, we use a fixed-point method. Given an aggregate
profile, we solve the Bellman equation backward in time to find the optimal
control $\alpha_t^*$ and the value function $V$, and then solve the forward
equation to update the distribution $\mu_t$. The aggregate
\[
	Z_t^u
	=
	\int_0^1 G(u,v)
	\sum_{x\in\cX}
	\alpha_t^v(x)\mu_t^v(x)\,dv
\]
is then recomputed from the updated policy and distribution, and the
fixed-point update is damped with parameter $\tau$.

The time horizon $T=10$ is represented by $N_t=50$ decision stages. The
integral in the aggregate is approximated on a uniform grid of $n_u=50$
agent indices. The fixed-point iteration is run with damping factor
$\tau=0.2$, maximum iteration count $500$, and tolerance $10^{-5}$; it is
stopped when the mean Euclidean change in the aggregate profile $Z$ falls
below this tolerance. In the figures below, the horizontal axis labeled
``Time (Physical)'' denotes the linearly rescaled decision-stage grid.
The action space is the interval $[a_{\min},a_{\max}]=[-5,5]$ discretized
with $50$ points. The model parameters used across all simulations are
detailed in Table~\ref{tab:parameters}.

\begin{table}[htbp]
	\centering
	\begin{tabular}{lcl}
		\hline
		\textbf{Parameter}         & \textbf{Symbol} & \textbf{Value} \\ \hline
		Time Horizon               & $T$             & 10.0           \\
		Diffusion Coefficient      & $\sigma$        & 0.5            \\
		Aggregate Impact           & $\gamma$        & 8.0            \\
		Trading Cost Coefficient   & $c_\alpha$      & 1.0            \\
		Terminal Cost Coefficient  & $c_T$           & 20.0           \\
		Damping Factor (Iteration) & $\tau$          & 0.2             \\ \hline
	\end{tabular}
	\caption{Parameter configuration for the numerical solver.}
	\label{tab:parameters}
\end{table}

The numerical experiments are intended as illustrations of the model.
Some of the regularity conditions used in Section~\ref{sec_existence} are
not satisfied in these computations; for instance, the Star graphon is
discontinuous.

In the following, we consider three specific graphon structures that
represent different interaction topologies:

\begin{itemize}
	\item \textbf{Mean-Field Graphon}: Every agent interacts equally with all other agents, i.e., $G_{\mathrm{MF}}(u,v)=1$ for all $(u,v)\in[0,1]^2$.

	\item \textbf{Star Graphon}: This models a hub-and-spoke topology. A core group $[0,\alpha]$ acts as a hub, interacting with the periphery $[\alpha,1]$, while peripheral agents do not interact with each other. We use $\alpha=0.2$:
	      \[
		      G_{\mathrm{Star}}(u,v)
		      =
		      \mathbf{1}_{\{u\in[0,0.2],\,v\in(0.2,1]\}
		      \cup
		      \{v\in[0,0.2],\,u\in(0.2,1]\}}.
	      \]

		\item \textbf{Min-Max Graphon}: This continuous, piecewise smooth kernel represents an interaction gradient, with higher influence near the center of the agent space:
	      \[
		      G_{\mathrm{MM}}(u,v)
		      =
		      \min(u,v)(1-\max(u,v)).
	      \]
\end{itemize}

\subsection{Experiment 1: Common Skewed Initial Distribution}

In the first setting, we initialize all agents across the graphon with a
common \textit{center-right} distribution, uniform over the grid points
within distance $1$ of $x=6$. 
The state space is $[0,10]$, discretized with $n_x = 101$ points.
We test this across three graphon structures: Mean-Field, Star, and Min-Max.

\begin{figure}[htbp]
	\centering
	\begin{subfigure}{1.0\textwidth}
		\includegraphics[width=\textwidth]{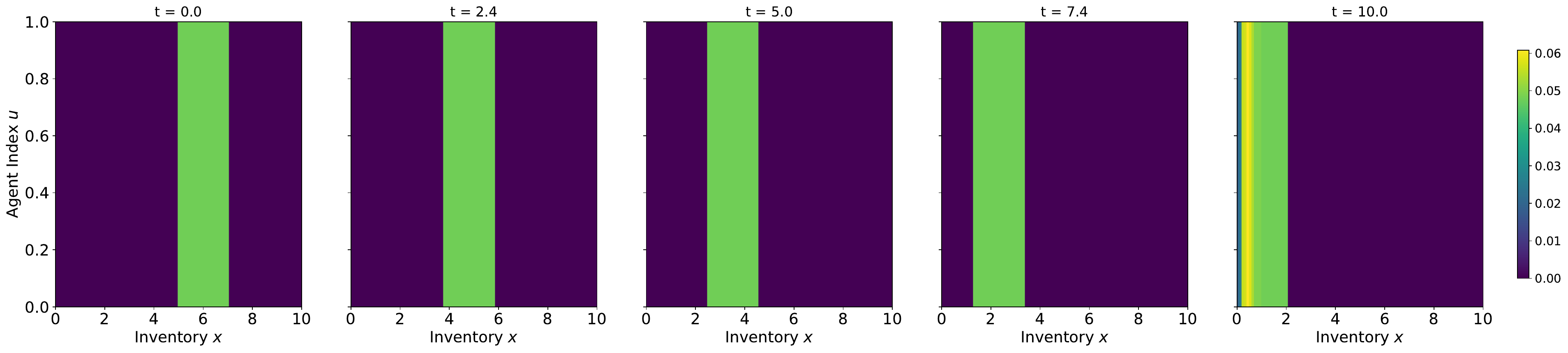}
	\end{subfigure}

	\begin{subfigure}{1.0\textwidth}
		\includegraphics[width=\textwidth]{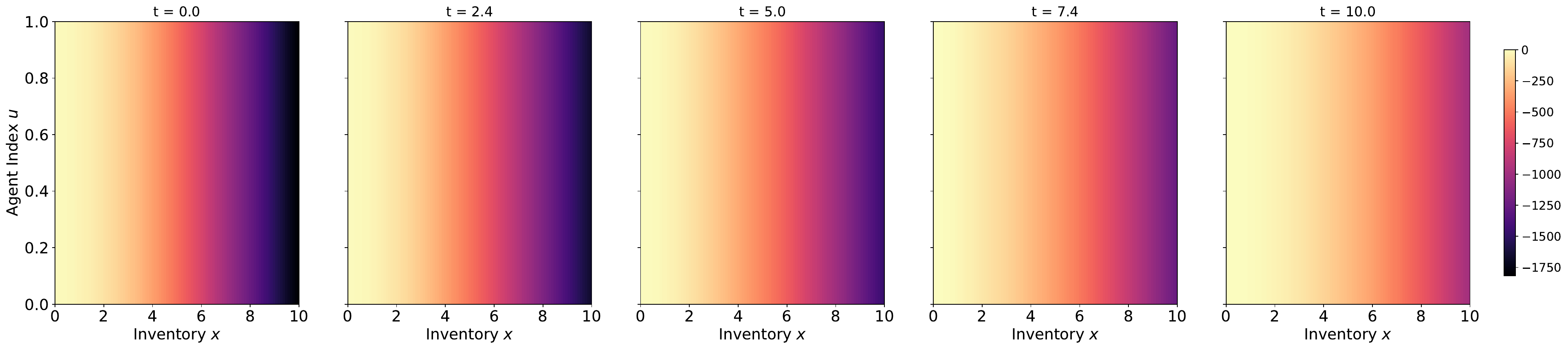}
	\end{subfigure}
	\caption{Evolution of the state distribution (top) and value function (bottom) for the Mean-Field graphon with a common initial distribution.}
	\label{fig:init_common_mf}
\end{figure}

With the Mean-Field graphon and a common initial distribution, all agents are identical. The results in Figure~\ref{fig:init_common_mf} show that agents actively trade to liquidate their portfolios, moving toward the state $x=0$ by terminal time $T$. The Mean-Field case serves as a baseline where every agent influences every other agent equally. For times close to $0$, the value function takes very negative values for large $x$, reflecting the high cost associated with starting with a large inventory.

The results for the Star and Min-Max graphons are respectively displayed in Figures~\ref{fig:init_common_star} and~\ref{fig:init_common_minmax}. Although the agents all start with the same initial distribution, we observe that, as time progresses, different indices have different distributions. With the Star graphon, at terminal time $T$, there is a clear split between high and low indices. With the Min-Max graphon, the terminal distribution exhibits a symmetric pattern about the axis $u=0.5$, which is consistent with the form of the graphon.

To better understand the pattern formation, we show the evolution of the aggregate, $Z_t^u$, (top row) and the control at time 0, $\alpha_0^u(x)$, (bottom row) for different graphons in Figure~\ref{fig:init_common_aggcont}. For the Mean-Field setting, the aggregate remains around $-2.5$ for most of the time interval, indicating a significant negative price impact. The policy takes negative values, consistent with the agents' objective to liquidate their portfolios. In the Star graphon case, we observe the formation of two sub-groups with distinct aggregates and policies. The hub group (low indices) experiences a more negative aggregate, while the effect on the peripheral group is less severe. Finally, with the Min-Max graphon, we observe that indices symmetric about $u=0.5$ (e.g., $u=0$ and $u=1$, or $u=0.24$ and $u=0.76$) share the same aggregate and policy, consistent with the graphon's symmetry. Agents at the extreme boundaries $(u=0, 1)$ have an aggregate impact near zero, as they are least connected.

\begin{figure}[htbp]
	\centering
	\begin{subfigure}{1.0\textwidth}
		\includegraphics[width=\textwidth]{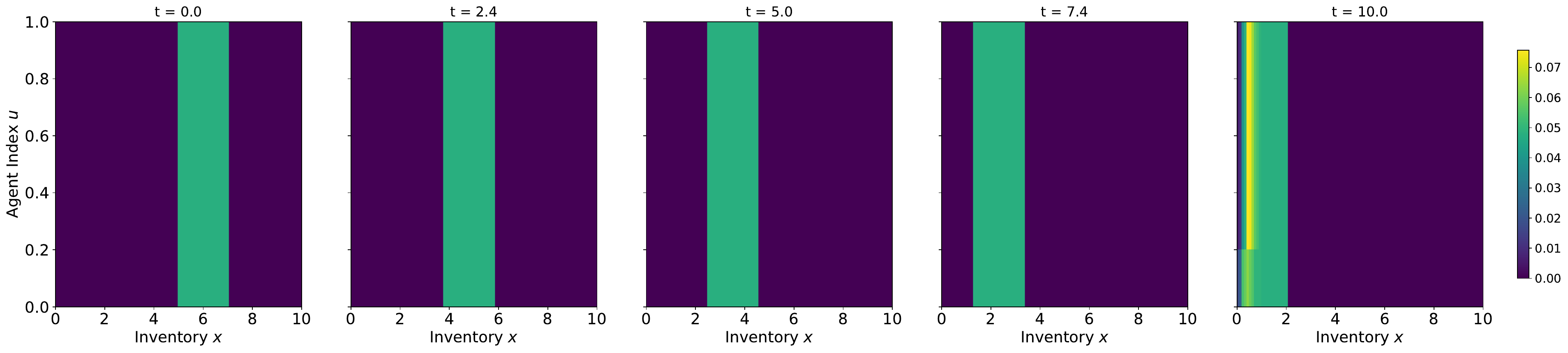}
	\end{subfigure}

	\begin{subfigure}{1.0\textwidth}
		\includegraphics[width=\textwidth]{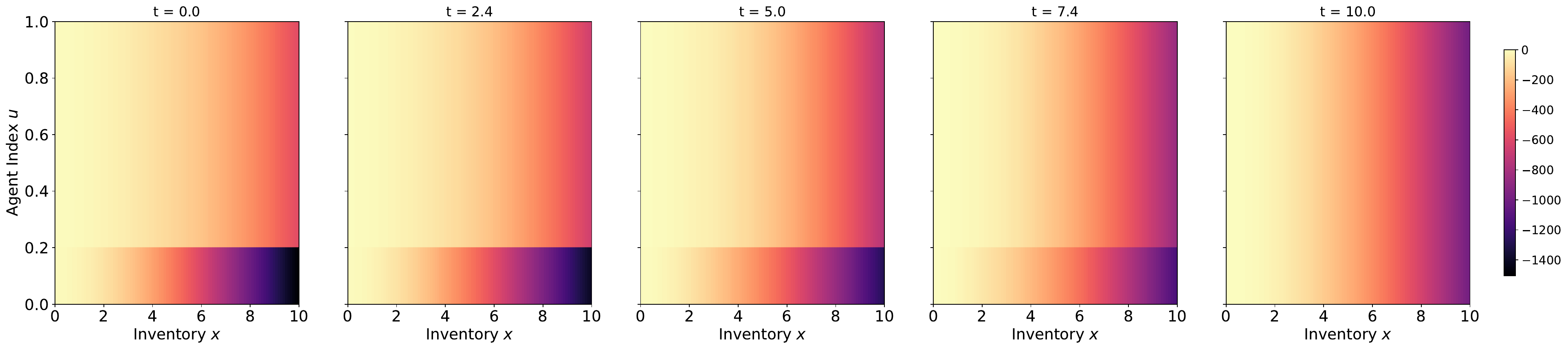}
	\end{subfigure}
	\caption{Evolution of the state distribution (top) and value function (bottom) for the Star graphon with a common initial distribution.}
	\label{fig:init_common_star}
\end{figure}

\begin{figure}[htbp]
	\centering
	\begin{subfigure}{1.0\textwidth}
		\includegraphics[width=\textwidth]{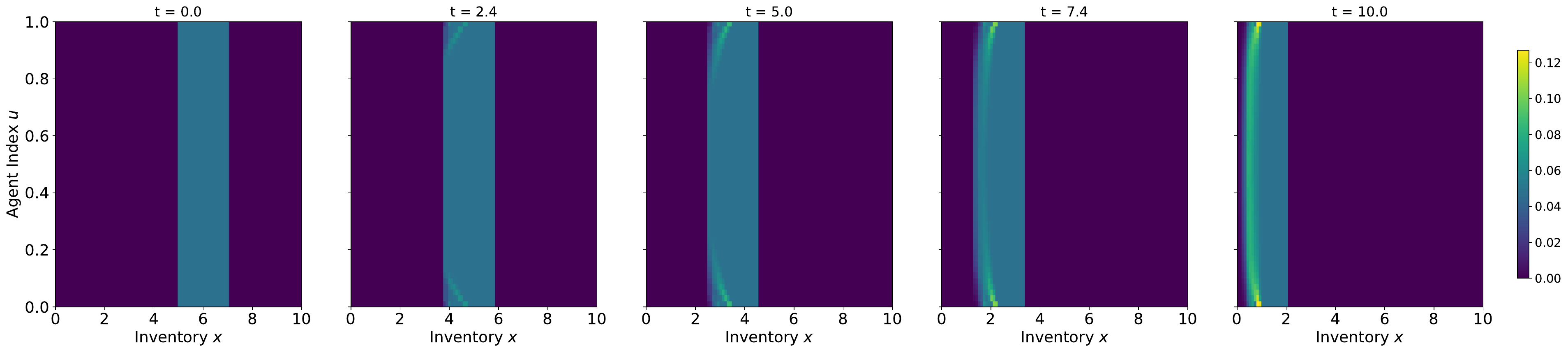}
	\end{subfigure}

	\begin{subfigure}{1.0\textwidth}
		\includegraphics[width=\textwidth]{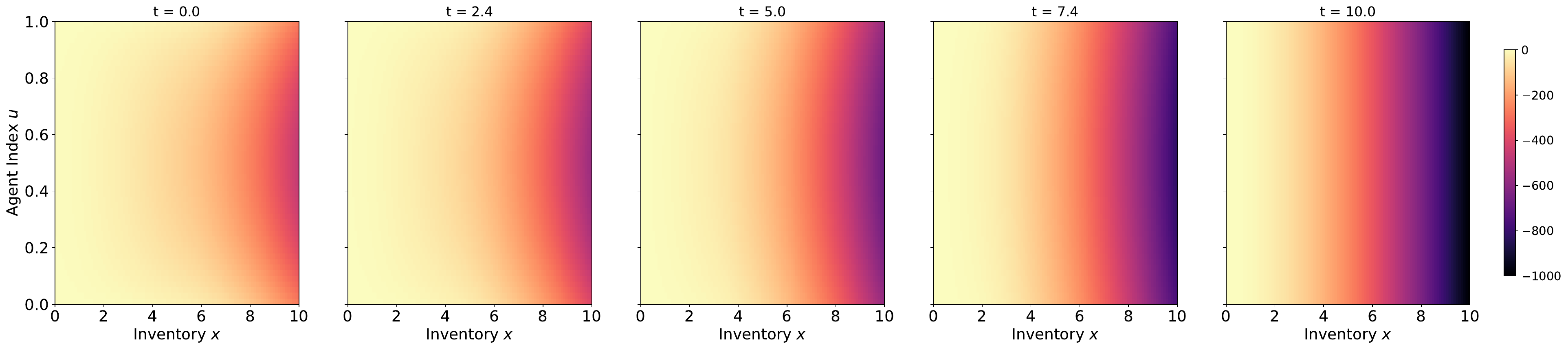}
	\end{subfigure}
	\caption{Evolution of the state distribution (top) and value function (bottom) for the Min-Max graphon with a common initial distribution.}
	\label{fig:init_common_minmax}
\end{figure}

\begin{figure}[htbp]
	\centering
	\begin{subfigure}{0.31\textwidth}
		\includegraphics[width=\textwidth]{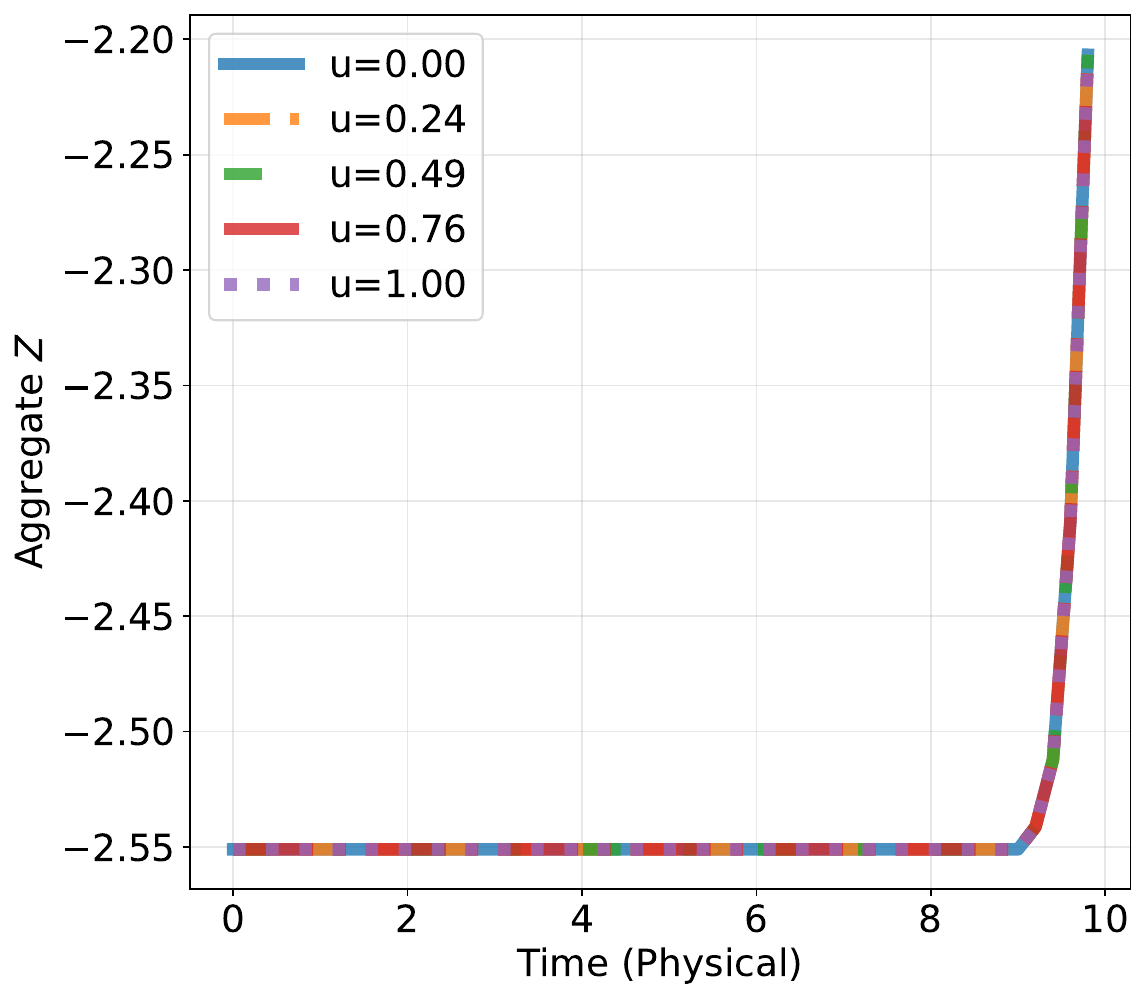}
	\end{subfigure}
	\hfill
	\begin{subfigure}{0.31\textwidth}
		\includegraphics[width=\textwidth]{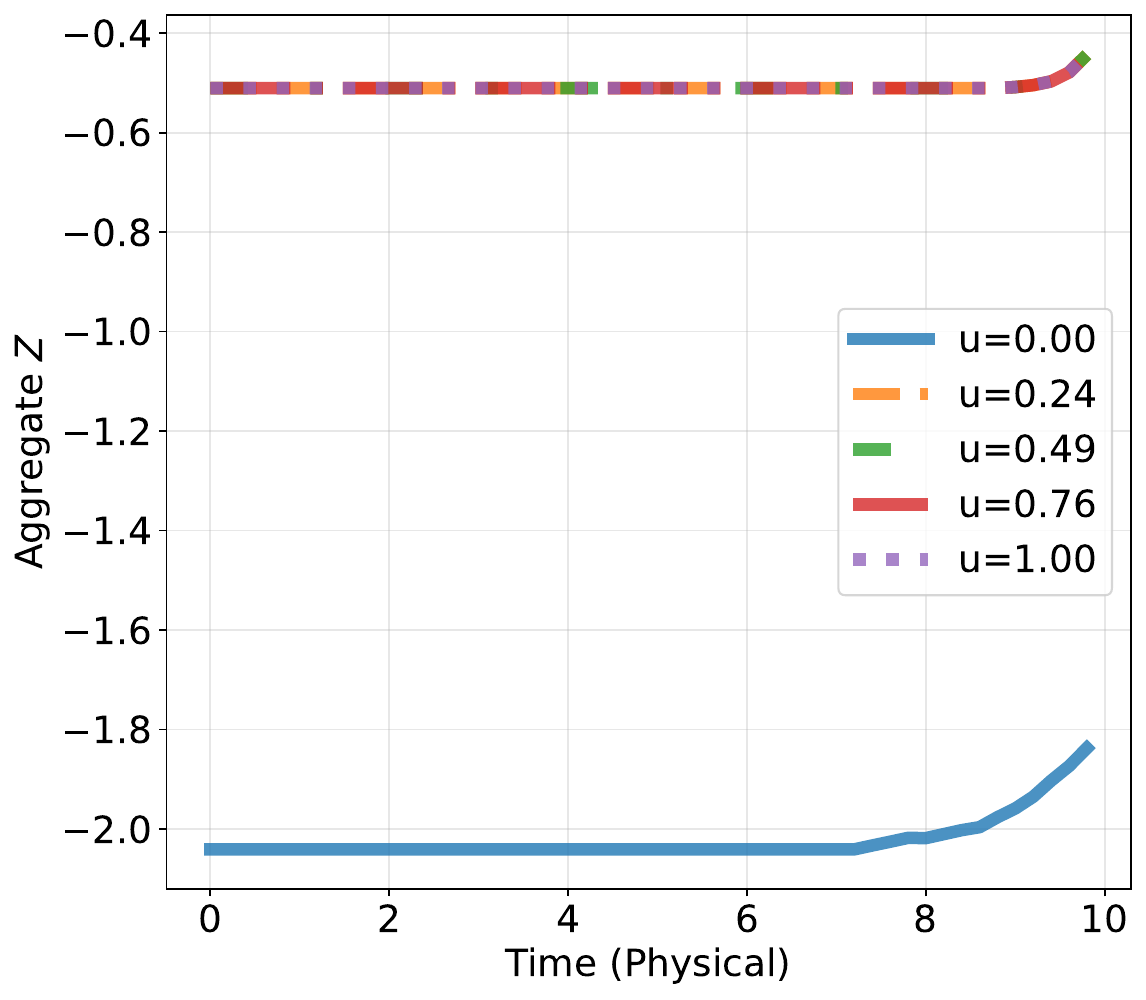}
	\end{subfigure}
	\hfill
	\begin{subfigure}{0.31\textwidth}
		\includegraphics[width=\textwidth]{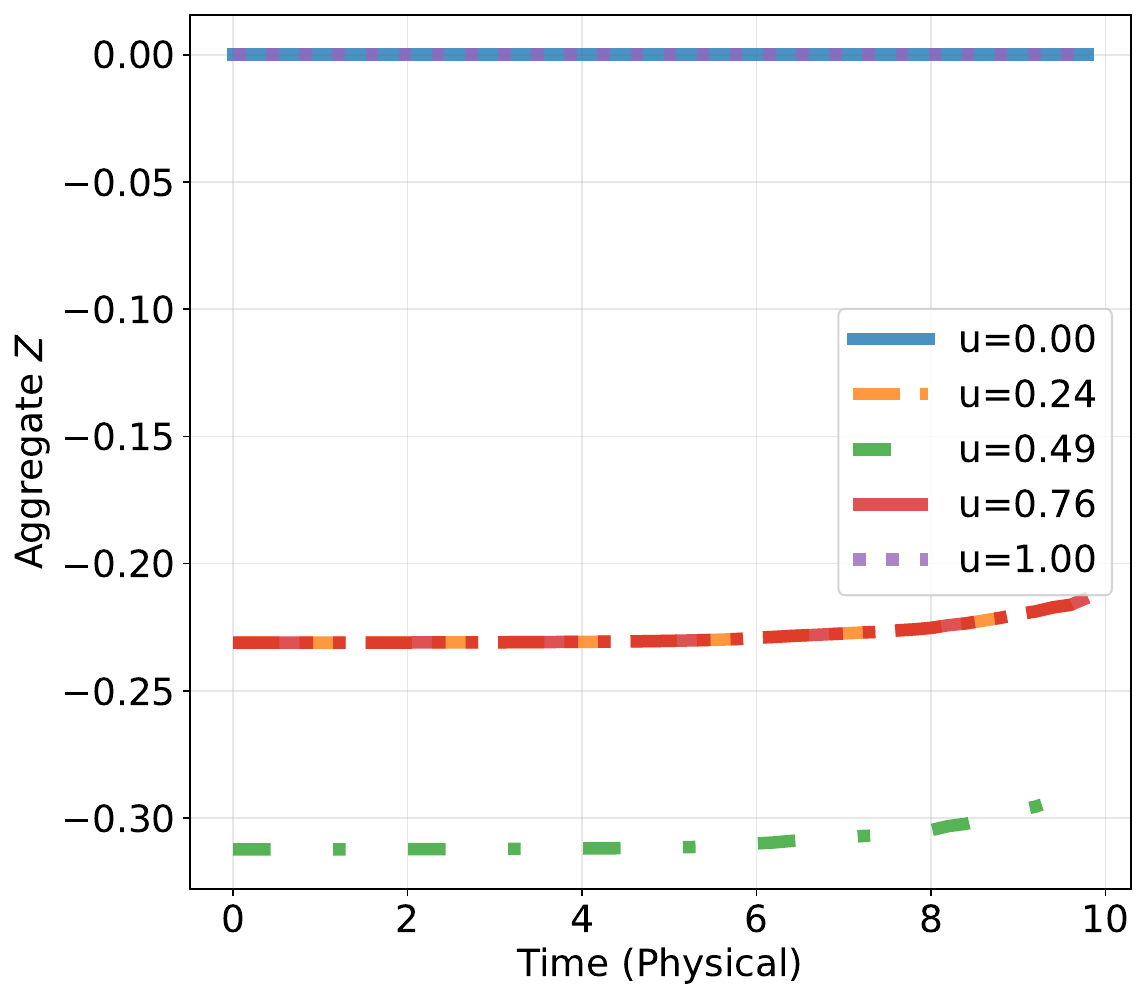}
	\end{subfigure}

	\begin{subfigure}{0.31\textwidth}
		\includegraphics[width=\textwidth]{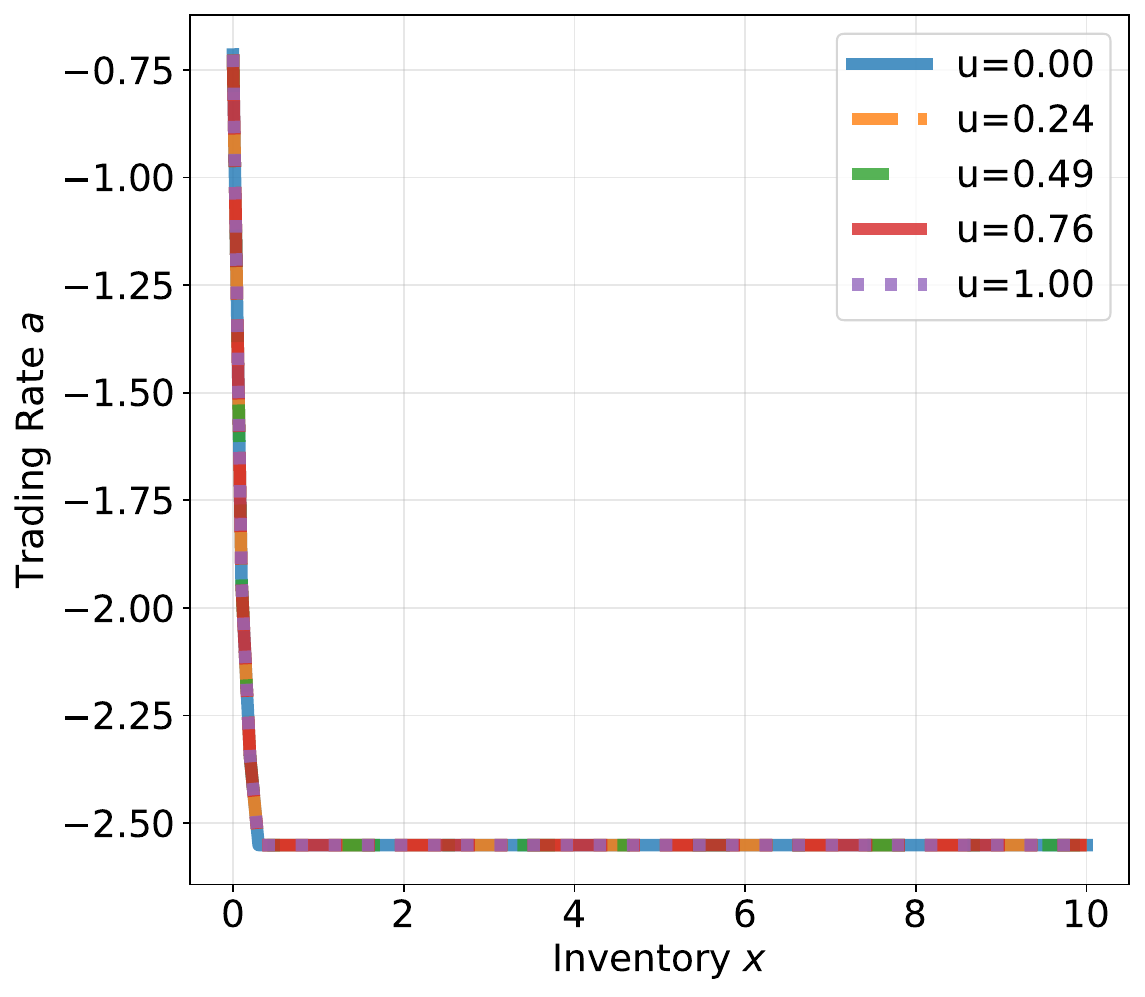}
	\end{subfigure}
	\hfill
	\begin{subfigure}{0.31\textwidth}
		\includegraphics[width=\textwidth]{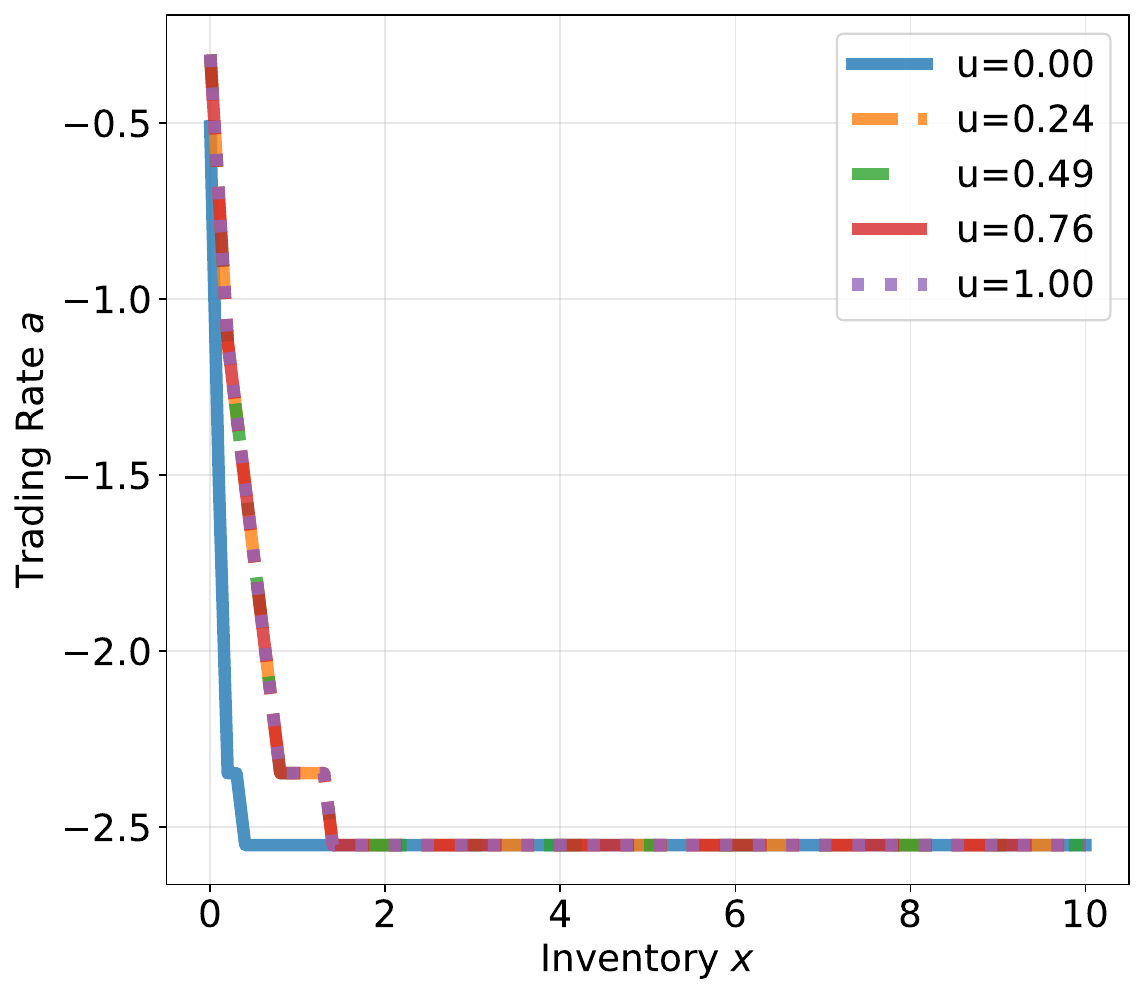}
	\end{subfigure}
	\hfill
	\begin{subfigure}{0.31\textwidth}
		\includegraphics[width=\textwidth]{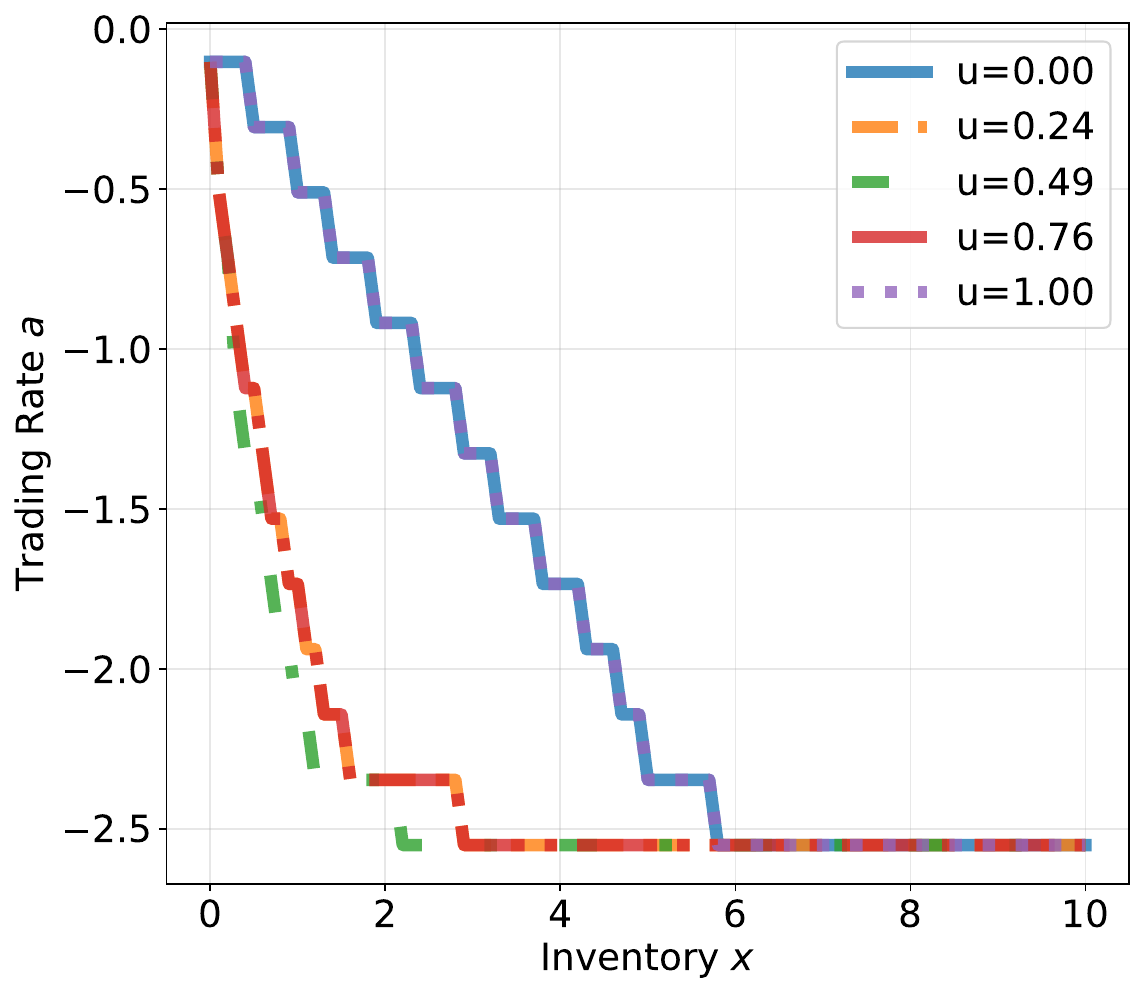}
	\end{subfigure}

	\begin{subfigure}{0.31\textwidth}
		\caption*{Mean-Field}
	\end{subfigure}
	\hfill
	\begin{subfigure}{0.31\textwidth}
		\caption*{Star}
	\end{subfigure}
	\hfill
	\begin{subfigure}{0.31\textwidth}
		\caption*{Min-Max}
	\end{subfigure}
	\caption{Aggregate's evolution (top row) and control at time $0$ (bottom row) for different graphons with a common initial distribution.}
	\label{fig:init_common_aggcont}
\end{figure}

\subsection{Experiment 2: Spatially Varying Initial Distributions}

In this setting, the initial distribution $\mu_0^u$ varies with the agent's
position $u \in [0,1]$. Specifically, for each $u$, we use a uniform
distribution over grid points within distance $1$ of a center that varies
linearly from $-9$ to $9$ as $u$ ranges from $0$ to $1$. This creates a
diagonal concentration of mass across the agent space while keeping the
initial support inside the state domain. We expand the state space to
$[-10,10]$ with $n_x = 201$ points to accommodate these initial conditions. 
We focus on the Star and Min-Max graphons, which exhibit the most significant structural heterogeneity.

The evolution of the state distribution and value function is displayed in Figures~\ref{fig:init_diago_star} and~\ref{fig:init_diago_minmax}. We again observe that the Star graphon leads to the formation of two distinct groups, while the Min-Max graphon yields a symmetric shape about $u=0.5$. Figure~\ref{fig:init_diago_aggcont} shows the corresponding evolution of the aggregate and the control at time $0$. We observe that the policies are positive for $x<0$ and negative for $x>0$, as agents trade to return their inventory toward zero.

These experiments illustrate that the graphon structure
can strongly affect the observed population dynamics and feedback policies. In
particular, in this example, both the network topology and the initial state heterogeneity
remain visible in the computed extended GMFG system.

\begin{figure}[htbp]
	\centering
	\begin{subfigure}{1.0\textwidth}
		\includegraphics[width=\textwidth]{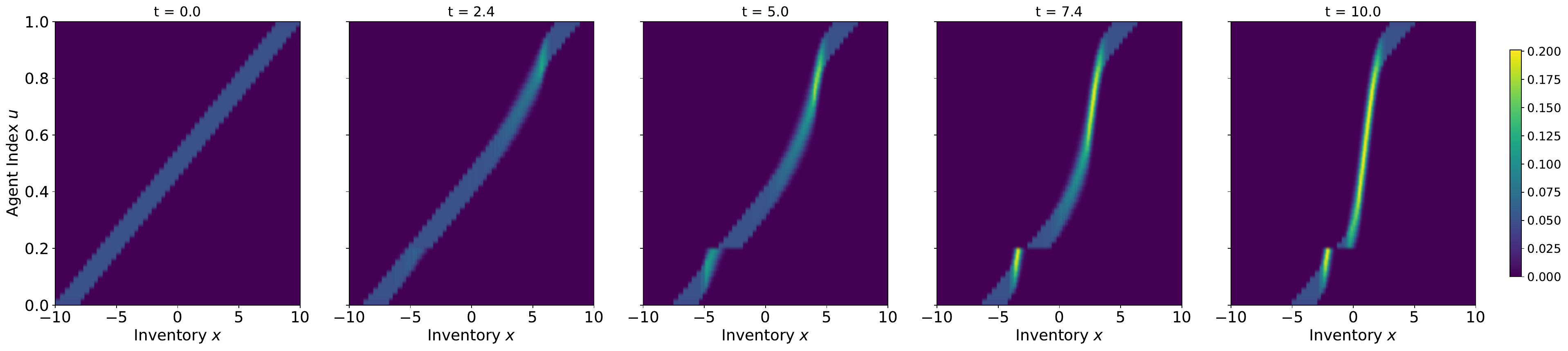}
	\end{subfigure}

	\begin{subfigure}{1.0\textwidth}
		\includegraphics[width=\textwidth]{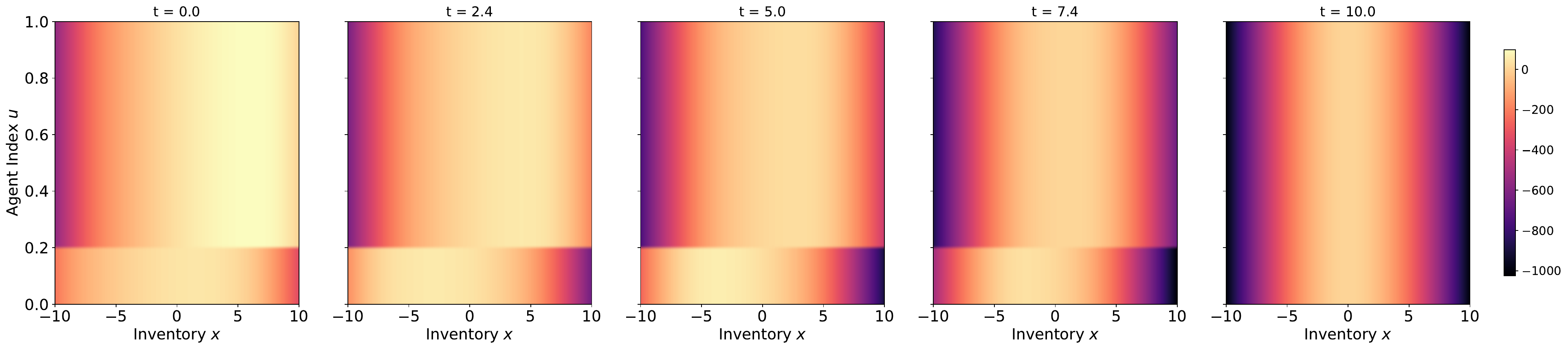}
	\end{subfigure}
	\caption{Evolution of the state distribution (top) and value function (bottom) for the Star graphon with a spatially varying initial distribution.}
	\label{fig:init_diago_star}
\end{figure}

\begin{figure}[htbp]
	\centering
	\begin{subfigure}{1.0\textwidth}
		\includegraphics[width=\textwidth]{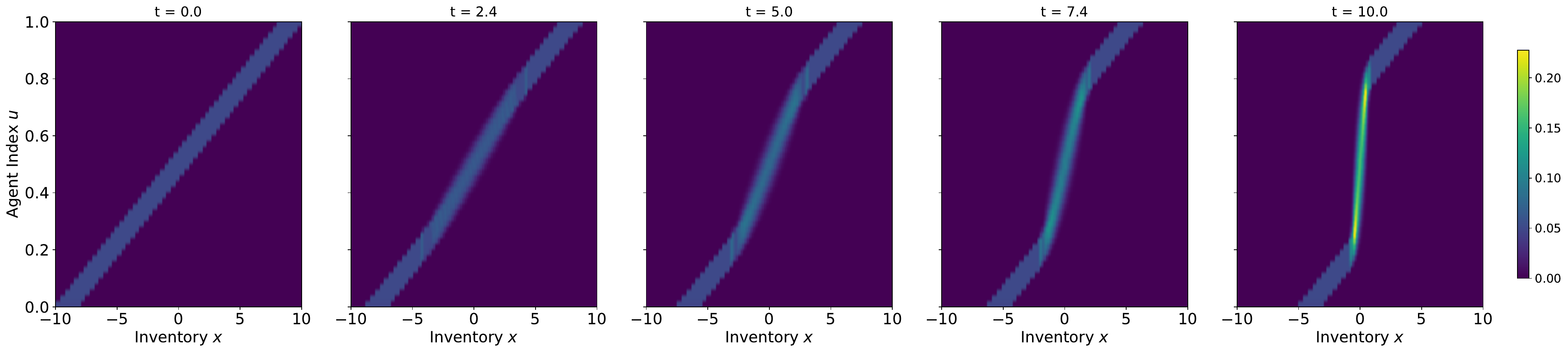}
	\end{subfigure}

	\begin{subfigure}{1.0\textwidth}
		\includegraphics[width=\textwidth]{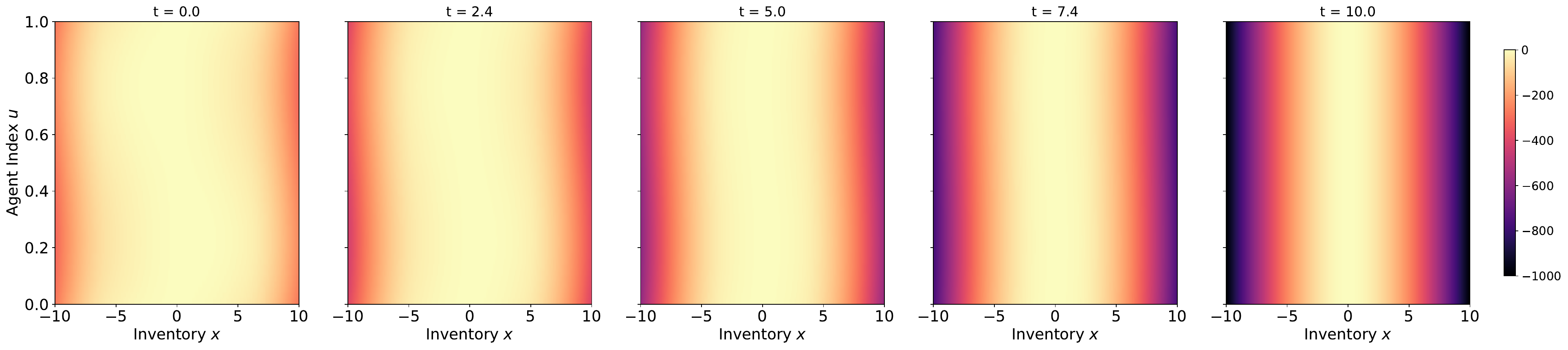}
	\end{subfigure}
	\caption{Evolution of the state distribution (top) and value function (bottom) for the Min-Max graphon with a spatially varying initial distribution.}
	\label{fig:init_diago_minmax}
\end{figure}

\begin{figure}[htbp]
	\centering
	\begin{subfigure}{0.48\textwidth}
		\includegraphics[width=0.5\textwidth]{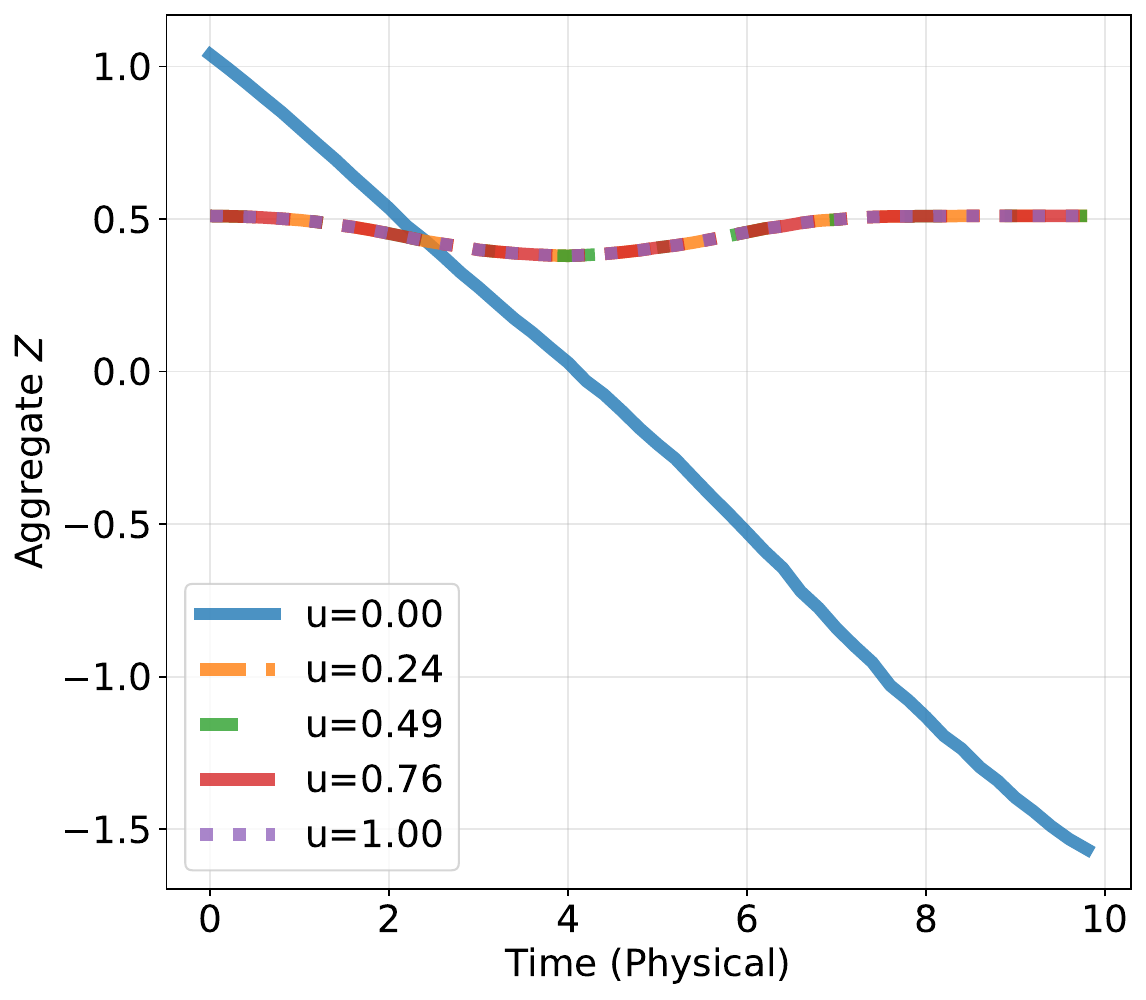}%
		\includegraphics[width=0.5\textwidth]{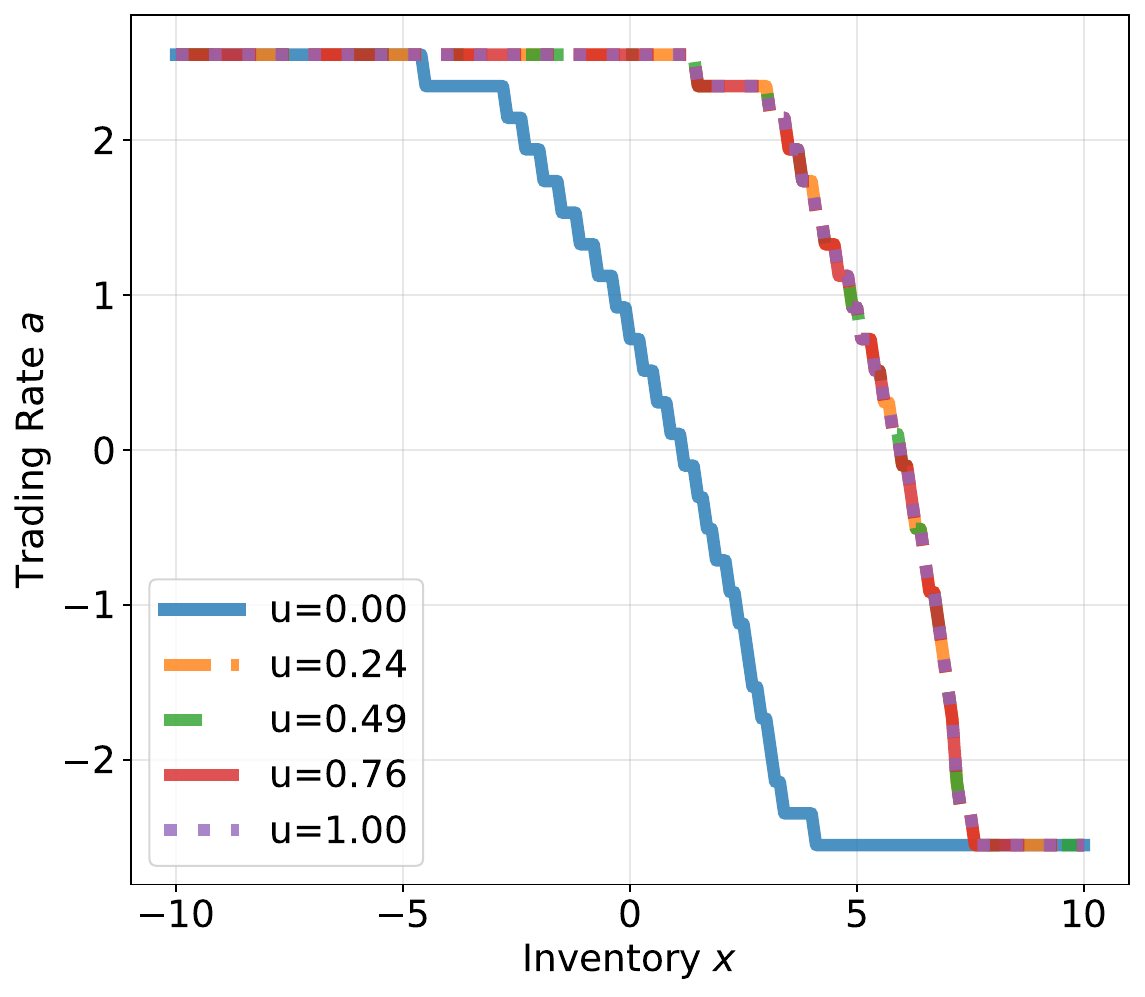}
		\caption*{Star}
	\end{subfigure}
	\hfill
	\begin{subfigure}{0.48\textwidth}
		\includegraphics[width=0.5\textwidth]{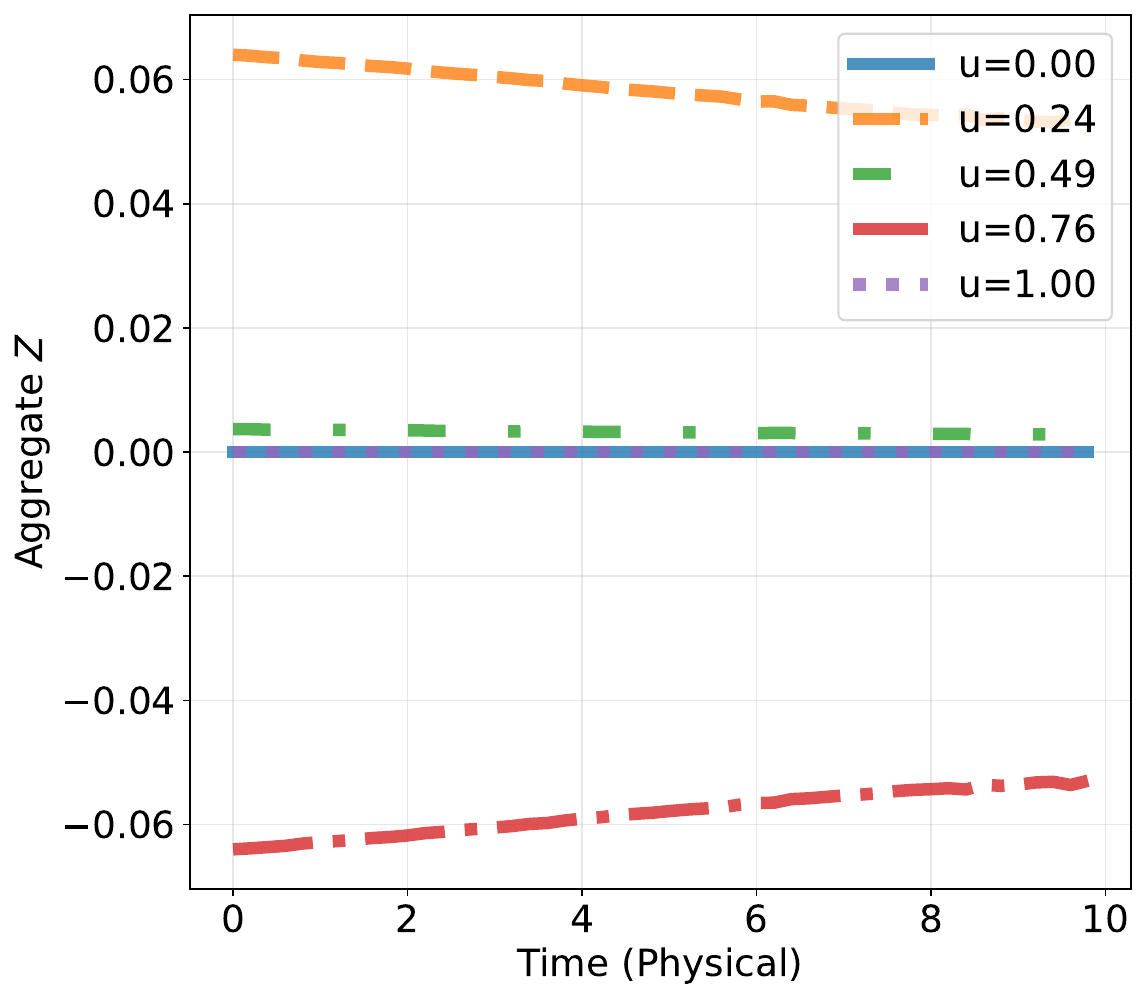}%
		\includegraphics[width=0.5\textwidth]{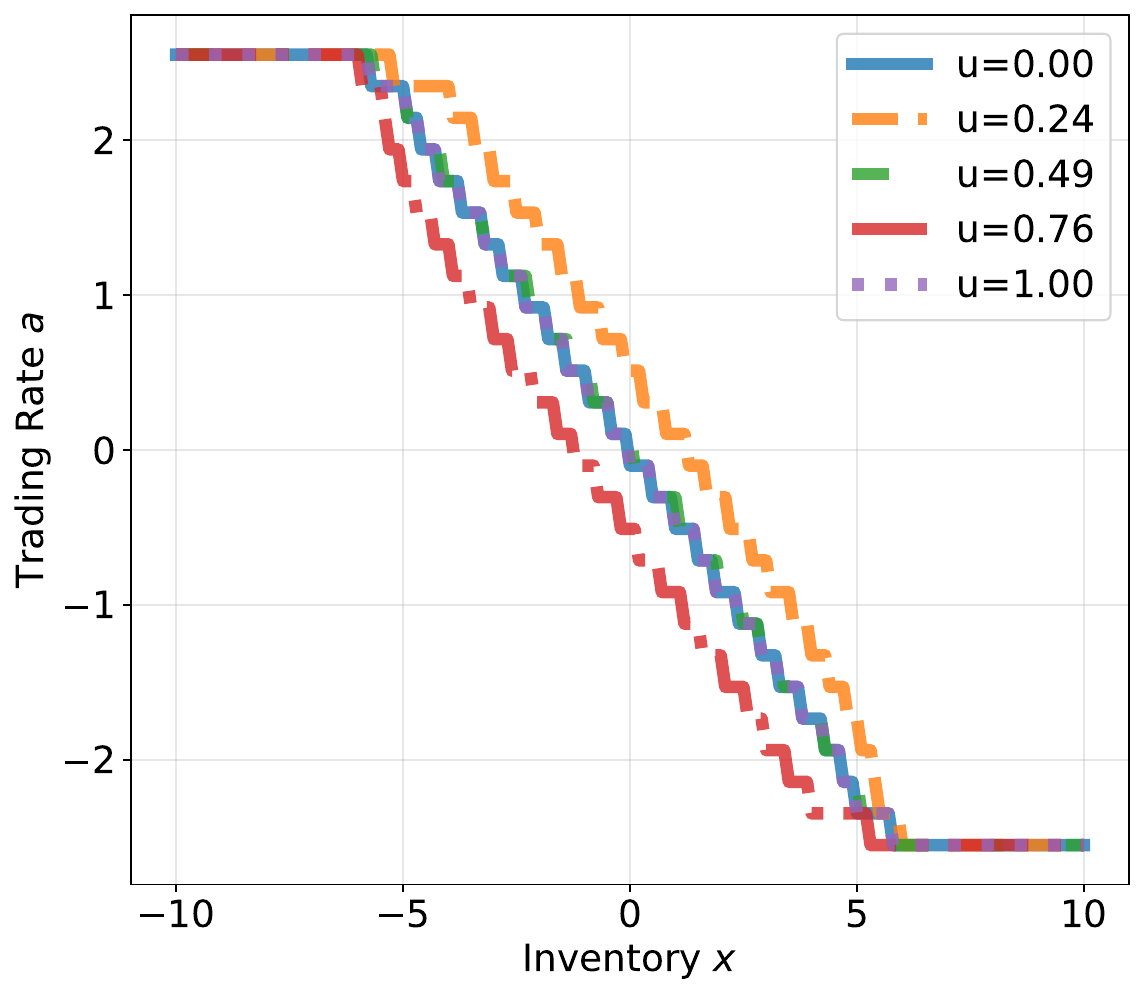}
		\caption*{Min-Max}
	\end{subfigure}
	\caption{Aggregate's evolution and control at time $0$ for the Star graphon (left) and the Min-Max graphon (right), with a spatially varying initial distribution.}
	\label{fig:init_diago_aggcont}
\end{figure}

\bibliographystyle{plain}
\bibliography{BiblioGMFG}

\appendix

\section{Proofs of Propositions and Lemmas}
\subsection{Proof of Lemma~\ref{lemma_f_stronglyConcave}}
\label{proof:lemma_f_stronglyConcave}

We show that $\partial_{aa}^2f(\theta,a) \leq -\lambda<0$
for some $\lambda>0$, uniformly in
$\theta=(t,u,Z,V,x)$.

By Assumption~\ref{assp_diff_concave_faz} and the definition of $f$,
\begin{align*}
	\partial_{aa}^2f(\theta,a)
	&=
	\partial_{aa}^2r_t^u(x,a,Z)
	+
	\partial_{aa}^2
	\left[
		\sum_{x'\in\cX}
		P_t^u(x'\mid x,a,Z)V(x')
	\right]
	\\
	&\leq
	-TC_rC_P|\cX|-\epsilon_r
	+
	\sum_{x'\in\cX}
	\left|
		\partial_{aa}^2
		P_t^u(x'\mid x,a,Z)
	\right|
	|V(x')|
	\\
	&\leq
	-TC_rC_P|\cX|-\epsilon_r
	+
	TC_rC_P|\cX|
	=
	-\epsilon_r.
\end{align*}
Therefore, the statement holds with $\lambda=\epsilon_r$.
\qed

\subsection{Proof of Lemma~\ref{lemma_ZtoA_lip}}
\label{proof:lemma_ZtoA_lip}

By Lemma~\ref{lemma_f_stronglyConcave}, the map $a\longmapsto f_t^u(Z,V,x,a)$
is uniformly strongly concave and hence admits a unique maximizer
$\widehat a_t^u(Z,V,x)$. To show Lipschitz continuity, let
$\theta:=(t,u,Z,V,x), \widetilde\theta := (t,u,\widetilde Z,\widetilde V,x)$,
and set $a=\widehat a(\theta), \widetilde a=\widehat a(\widetilde\theta)$.
The maximizer $\widehat a$ satisfies the variational inequality
\[
	\forall\,b\in\cA,
	\qquad
	(b-a)\partial_af_t^u(Z,V,x,a)
	\leq0.
\]
Choosing $b=\widetilde a$ and $b=a$ in the two corresponding
variational inequalities gives
\[
	(\widetilde a-a)\partial_af(\theta,a)\leq0,
	\qquad
	(a-\widetilde a)
	\partial_af(\widetilde\theta,\widetilde a)
	\leq0.
\]
Consequently,
\[
	(\widetilde a-a)
	\left[
		\partial_af(\theta,a)
		-
		\partial_af(\widetilde\theta,\widetilde a)
	\right]
	\leq0.
\]
Hence,
\begin{align*}
	(\widetilde a-a)
	\left[
		\partial_af(\theta,a)
		-
		\partial_af(\theta,\widetilde a)
	\right]
	\leq
	(\widetilde a-a)
	\left[
		\partial_af(\widetilde\theta,\widetilde a)
		-
		\partial_af(\theta,\widetilde a)
	\right].
\end{align*}

The strong concavity of $f$ implies, see, e.g.,
\cite[Lemma A.1]{montrucchio1987lipschitz}, that
\[
	\lambda|\widetilde a-a|^2
	\leq
	(\widetilde a-a)
	\left[
		\partial_af(\theta,a)
		-
		\partial_af(\theta,\widetilde a)
	\right].
\]
Combining the two inequalities, we deduce
\begin{align*}
	\lambda|\widetilde a-a|^2
	\leq
	(\widetilde a-a)
	\left[
		\partial_af(\theta,a)
		-
		\partial_af(\theta,\widetilde a)
	\right]
	\leq
	(\widetilde a-a)
	\left[
		\partial_af(\widetilde\theta,\widetilde a)
		-
		\partial_af(\theta,\widetilde a)
	\right].
\end{align*}
By
Assumption~\ref{assp_diff_concave_faz}%
\ref{assp_ZVtodf_lip},
\[
	|\widehat a_t^u(Z,V,x)
	-
	\widehat a_t^u(\widetilde Z,\widetilde V,x)|
	\leq
	\frac{L_{f,Z}}{\lambda}
	|Z-\widetilde Z|
	+
	\frac{L_{f,V}}{\lambda}
	\max_{y\in\cX}
	|V(y)-\widetilde V(y)|.
\]
Taking the maximum over $x\in\cX$ concludes the proof with
$L_{a,Z}=\frac{L_{f,Z}}{\lambda}$ and $L_{a,V}=\frac{L_{f,V}}{\lambda}$.
\qed

\subsection{Proof of Lemma~\ref{lemma_ztoVz_lip}}
\label{proof:lemma_ztoVz_lip}

The function $\bV^{\bZ,u}$ defined in~\eqref{eq_VZ} is well defined.
Indeed, the terminal value is prescribed by $r_T^u$, and at every
decision time the value function is the maximum of a continuous
function on the compact action space $\cA$. Joint measurability follows
by backward induction and the measurable maximum theorem.

We first note that
\begin{equation}\label{eq:preliminary-value-bound}
	|V_t^{\bZ,u}(x)|
	\leq
	(T-t+1)C_r,
	\qquad
	t\in\ocT,\quad x\in\cX.
\end{equation}
Indeed, at time $T$, $|V_T^{\bZ,u}(x)| = |r_T^u(x)| \leq C_r$.
If the bound holds at time $t+1$, then
\begin{align*}
	|V_t^{\bZ,u}(x)|
	&=
	\left|
		\max_{a\in\cA}
		\left\{
			r_t^u(x,a,Z_t)
			+
			\sum_{x'\in\cX}
			P_t^u(x'\mid x,a,Z_t)
			V_{t+1}^{\bZ,u}(x')
		\right\}
	\right|
	\\
	&\leq
	\max_{a\in\cA}
	\left\{
		|r_t^u(x,a,Z_t)|
		+
		\sum_{x'\in\cX}
		P_t^u(x'\mid x,a,Z_t)
		|V_{t+1}^{\bZ,u}(x')|
		\right\}
	\\
	&\leq
	C_r+(T-t)C_r
	=
	(T-t+1)C_r.
\end{align*}
This proves item~\ref{lemma_sub_V_lessTCr}.

We now prove item~\ref{lemma_sub_ZtoV_lip_infinf} by backward induction.
At time $T$, the value function is $r_T^u$ and does not depend on
$\bZ$, so we set $L_{V,Z,T}=0$.
Let $t\in\cT$ and assume that
$\bZ\mapsto V_{t+1}^{\bZ,u}$ is Lipschitz continuous with constant
$L_{V,Z,t+1}$. Let
$\bZ,\widetilde\bZ\in\RR^\cT$. Using the inequality between two
maxima, we have
\begin{align*}
	&
	\max_{x\in\cX}
	\left|
		V_t^{\bZ,u}(x)
		-
		V_t^{\widetilde\bZ,u}(x)
	\right|
	\\
	&\leq
	\max_{\substack{x\in\cX\\a\in\cA}}
	\Bigg|
		r_t^u(x,a,Z_t)
		-
		r_t^u(x,a,\widetilde Z_t)
	+
	\sum_{x'\in\cX}
	P_t^u(x'\mid x,a,Z_t)
	V_{t+1}^{\bZ,u}(x')
	-
	\sum_{x'\in\cX}
	P_t^u(x'\mid x,a,\widetilde Z_t)
	V_{t+1}^{\widetilde\bZ,u}(x')
	\Bigg|
	\\
	&\leq
	\max_{\substack{x\in\cX\\a\in\cA}}
	\Bigg(
		|r_t^u(x,a,Z_t)
		-
		r_t^u(x,a,\widetilde Z_t)|
	+
	\left|
		\sum_{x'\in\cX}
		\left[
			P_t^u(x'\mid x,a,Z_t)
			-
			P_t^u(x'\mid x,a,\widetilde Z_t)
		\right]
		V_{t+1}^{\bZ,u}(x')
	\right|
	\\
	&\qquad
	+
	\left|
		\sum_{x'\in\cX}
		P_t^u(x'\mid x,a,\widetilde Z_t)
		\left[
			V_{t+1}^{\bZ,u}(x')
			-
			V_{t+1}^{\widetilde\bZ,u}(x')
		\right]
	\right|
	\Bigg).
\end{align*}

For the first term, by
Assumption~\ref{assp_P_lip}%
\ref{assp_z_to_r_lip}, $|r_t^u(x,a,Z_t) - r_t^u(x,a,\widetilde Z_t)| \leq L_{r,Z} \|\bZ-\widetilde\bZ\|_\infty$.
For the second term, by
Assumption~\ref{assp_P_lip}%
\ref{assp_z_to_P_lip}
and~\eqref{eq:preliminary-value-bound},
\begin{align*}
	&
	\left|
		\sum_{x'\in\cX}
		\left[
			P_t^u(x'\mid x,a,Z_t)
			-
			P_t^u(x'\mid x,a,\widetilde Z_t)
		\right]
		V_{t+1}^{\bZ,u}(x')
	\right|
	\\
	& \qquad \leq
	L_{P,Z}|\cX|
	\max_{x'\in\cX}
	|V_{t+1}^{\bZ,u}(x')|
	\|\bZ-\widetilde\bZ\|_\infty
	\leq
	L_{P,Z}|\cX|(T-t)C_r
	\|\bZ-\widetilde\bZ\|_\infty.
\end{align*}
For the third term, since $P_t^u(\cdot\mid x,a,\widetilde Z_t)$
is a probability distribution,
\begin{align*}
	&
	\left|
		\sum_{x'\in\cX}
		P_t^u(x'\mid x,a,\widetilde Z_t)
		\left[
			V_{t+1}^{\bZ,u}(x')
			-
			V_{t+1}^{\widetilde\bZ,u}(x')
		\right]
	\right|
	\\
	&\qquad \leq
	\max_{x'\in\cX}
	\left|
		V_{t+1}^{\bZ,u}(x')
		-
		V_{t+1}^{\widetilde\bZ,u}(x')
	\right|
	\leq
	L_{V,Z,t+1}
\|\bZ-\widetilde\bZ\|_\infty.
\end{align*}
Collecting the terms and setting $L_{V,Z,t} := L_{r,Z} + L_{P,Z}|\cX|(T-t)C_r + L_{V,Z,t+1}$,
we obtain
\[
	\max_{x\in\cX}
	|V_t^{\bZ,u}(x)
	-
	V_t^{\widetilde\bZ,u}(x)|
	\leq
	L_{V,Z,t}
	\|\bZ-\widetilde\bZ\|_\infty.
\]
Thus, with $\overline K_{V,Z} := \max_{t\in\ocT}L_{V,Z,t}$,
item~\ref{lemma_sub_ZtoV_lip_infinf} holds with
$\overline K_{V,Z}$.

We next prove item~\ref{lemma_sub_ZtoV_lip_infinf2}. From the preceding
pointwise estimate,
\begin{align*}
	&
	\|\buV^{\buZ}
	-
	\buV^{\widetilde\buZ}\|_{\infty,\infty,2}
	\leq
	\overline K_{V,Z}
	\left(
		\int_I
		\max_{s\in\cT}
		|Z_s^u-\widetilde Z_s^u|^2\,du
	\right)^{1/2}
	\\
	&\qquad \leq
	\overline K_{V,Z}
	\left(
		\sum_{s\in\cT}
		\int_I
		|Z_s^u-\widetilde Z_s^u|^2\,du
	\right)^{1/2}
	\leq
	\sqrt{T}\,
	\overline K_{V,Z}
	\|\buZ-\widetilde\buZ\|_{\infty,2}.
\end{align*}
After enlarging the constant and setting $K_{V,Z} := \max\{1,\sqrt T\}\, \overline K_{V,Z}$,
both items~\ref{lemma_sub_ZtoV_lip_infinf}
and~\ref{lemma_sub_ZtoV_lip_infinf2} hold with the same constant.
\qed

\subsection{Proof of Lemma~\ref{lemma_ZtomuZ_lip}}
\label{proof:lemma_ZtomuZ_lip}

The argmax in~\eqref{eq_pi*ZVu} is uniquely defined by
Lemma~\ref{lemma_ZtoA_lip}, since the objective is uniformly strongly
concave. Moreover, Assumption~\ref{assp_P_lip} and the measurable
maximum theorem imply that the unique optimizer is measurable. For $(\bpi^u,\bZ^u) \in \bfPi\times\bcZ$,
let us introduce $\bmu^{u,\bpi^u,\bZ^u}$ defined by
\begin{equation*}
	\begin{dcases}
		\mu_{t+1}^{u,\bpi^u,\bZ^u}(x)
		=
		\displaystyle
		\sum_{x'\in\cX}
		\mu_t^{u,\bpi^u,\bZ^u}(x')
		P_t^u\left(
			x\mid x',
			\pi_t^u(x'),
			Z_t^u
		\right), \quad
		x\in\cX,\quad t\in\cT,
		\\[0.4em]
		\mu_0^{u,\bpi^u,\bZ^u}(x)
		=
		\mu_{\mathrm{init}}^u(x),
		\qquad x\in\cX.
	\end{dcases}
\end{equation*}

We first prove by induction that
$\bmu^{u,\bpi^u,\bZ^u}$ is Lipschitz continuous with respect to
$(\bpi^u,\bZ^u)$ for the norms
\begin{equation*}
	\|\bmu\|_{\infty,\infty}
	:=
	\max_{\substack{t\in\ocT\\x\in\cX}}
	|\mu_t(x)|,
	\qquad
	\|(\bpi,\bZ)\|_\infty
	:=
	\max\left\{
		\|\bpi\|_{\infty,\infty},
		\|\bZ\|_\infty
	\right\}.
\end{equation*}

Since $\mu_{\mathrm{init}}^u$ does not depend on
$(\bpi^u,\bZ^u)$, the induction starts with
$L_{m,0}=0$. Assume that $(\bpi^u,\bZ^u) \longmapsto \mu_t^{u,\bpi^u,\bZ^u}$
is Lipschitz continuous with constant $L_{m,t}$. For any
$(\bpi^u,\bZ^u)$ and
$(\widetilde\bpi^u,\widetilde\bZ^u)$, and any $x\in\cX$,
\begin{align}
	&
	\left|
		\mu_{t+1}^{u,\bpi^u,\bZ^u}(x)
		-
		\mu_{t+1}^{u,\widetilde\bpi^u,\widetilde\bZ^u}(x)
	\right|
	\notag
	\\
	&\quad \leq
	\sum_{x'\in\cX}
	\mu_t^{u,\bpi^u,\bZ^u}(x')
	\left|
		P_t^u\left(
			x\mid x',
			\pi_t^u(x'),
			Z_t^u
		\right)
		-
		P_t^u\left(
			x\mid x',
			\pi_t^u(x'),
			\widetilde Z_t^u
		\right)
	\right|
	\notag
	\\
	&\qquad
	+
	\sum_{x'\in\cX}
	\mu_t^{u,\bpi^u,\bZ^u}(x')
	\left|
		P_t^u\left(
			x\mid x',
			\pi_t^u(x'),
			\widetilde Z_t^u
		\right)
		-
		P_t^u\left(
			x\mid x',
			\widetilde\pi_t^u(x'),
			\widetilde Z_t^u
		\right)
	\right|
	\notag
	\\
	&\qquad
	+
	\left|
		\sum_{x'\in\cX}
		\left[
			\mu_t^{u,\bpi^u,\bZ^u}(x')
			-
			\mu_t^{u,\widetilde\bpi^u,\widetilde\bZ^u}(x')
		\right]
		P_t^u\left(
			x\mid x',
			\widetilde\pi_t^u(x'),
			\widetilde Z_t^u
		\right)
	\right|.
	\label{eq_unif-cont-mupiZ-diff}
\end{align}

For the first term in~\eqref{eq_unif-cont-mupiZ-diff}, by
Assumption~\ref{assp_P_lip}%
\ref{assp_z_to_P_lip},
\[
	\sum_{x'\in\cX}
	\mu_t^{u,\bpi^u,\bZ^u}(x')
	\left|
		P_t^u(x\mid x',\pi_t^u(x'),Z_t^u)
		-
		P_t^u(x\mid x',\pi_t^u(x'),\widetilde Z_t^u)
	\right|
	\leq
	L_{P,Z}
	\|\bZ^u-\widetilde\bZ^u\|_\infty.
\]
For the second term, by
Assumption~\ref{assp_P_lip}%
\ref{assp_a_to_P_lip},
\[
	\sum_{x'\in\cX}
	\mu_t^{u,\bpi^u,\bZ^u}(x')
	\left|
		P_t^u(x\mid x',\pi_t^u(x'),\widetilde Z_t^u)
		-
		P_t^u(
			x\mid x',
			\widetilde\pi_t^u(x'),
			\widetilde Z_t^u
		)
	\right|
	\leq
	L_{P,a}
	\|\bpi^u-\widetilde\bpi^u\|_{\infty,\infty}.
\]
For the third term, the induction hypothesis gives
\[
	\max_{x\in\cX}
	\left|
		\mu_t^{u,\bpi^u,\bZ^u}(x)
		-
		\mu_t^{u,\widetilde\bpi^u,\widetilde\bZ^u}(x)
	\right|
	\leq
	L_{m,t}
	\max\left\{
		\|\bpi^u-\widetilde\bpi^u\|_{\infty,\infty},
		\|\bZ^u-\widetilde\bZ^u\|_\infty
	\right\}.
\]
Consequently,
\begin{align*}
	&
	\left|
		\sum_{x'\in\cX}
		\left[
			\mu_t^{u,\bpi^u,\bZ^u}(x')
			-
			\mu_t^{u,\widetilde\bpi^u,\widetilde\bZ^u}(x')
		\right]
		P_t^u\left(
			x\mid x',
			\widetilde\pi_t^u(x'),
			\widetilde Z_t^u
		\right)
	\right|
	\\
	&\leq
	L_{m,t}|\cX|
	\max\left\{
		\|\bpi^u-\widetilde\bpi^u\|_{\infty,\infty},
		\|\bZ^u-\widetilde\bZ^u\|_\infty
	\right\}.
\end{align*}
Combining the bounds, we obtain
\begin{align*}
	\left|
		\mu_{t+1}^{u,\bpi^u,\bZ^u}(x)
		-
		\mu_{t+1}^{u,\widetilde\bpi^u,\widetilde\bZ^u}(x)
	\right|
	\leq
	\left(
		L_{m,t}|\cX|
		+
		L_{P,Z}
		+
		L_{P,a}
	\right)
	\max\left\{
		\|\bpi^u-\widetilde\bpi^u\|_{\infty,\infty},
		\|\bZ^u-\widetilde\bZ^u\|_\infty
	\right\}.
\end{align*}
Thus, setting $L_{m,t+1} := L_{m,t}|\cX| + L_{P,Z} + L_{P,a}$,
we deduce that
$(\bpi^u,\bZ^u)\mapsto
\mu_{t+1}^{u,\bpi^u,\bZ^u}$ is Lipschitz continuous. By induction,
\begin{equation}\label{eq_mudiff-piZ}
	\left\|
		\bmu^{u,\bpi^u,\bZ^u}
		-
		\bmu^{u,\widetilde\bpi^u,\widetilde\bZ^u}
	\right\|_{\infty,\infty}
	\leq
	K_m
	\left\|
		(\bpi^u,\bZ^u)
		-
		(\widetilde\bpi^u,\widetilde\bZ^u)
	\right\|_\infty,
\end{equation}
where $K_m := \max_{t\in\ocT}L_{m,t}.$

We now prove the Lipschitz continuity of
$\buZ\mapsto\bupi^{*,\buZ,\buV^{\buZ}}$ in
item~\ref{lemma_sub_ZtoPi_lip}. For each $u\in I$, Lemmas
\ref{lemma_ZtoA_lip} and~\ref{lemma_ztoVz_lip} give
\begin{align*}
	\left\|
		\bupi^{*,\buZ,\buV^{\buZ},u}
		-
		\bupi^{*,\widetilde\buZ,
		\buV^{\widetilde\buZ},u}
	\right\|_{\infty,\infty}
	\leq &
	L_{a,Z}
	\|\bZ^u-\widetilde\bZ^u\|_\infty
	+
	L_{a,V}
	\left\|
		\bV^{\bZ^u,u}
		-
		\bV^{\widetilde\bZ^u,u}
	\right\|_{\infty,\infty}
	\\
	\leq &
	\left(
		L_{a,Z}
		+
		L_{a,V}K_{V,Z}
	\right)
	\|\bZ^u-\widetilde\bZ^u\|_\infty.
\end{align*}
Therefore,
\begin{align*}
	\left\|
		\bupi^{*,\buZ,\buV^{\buZ}}
		-
		\bupi^{*,\widetilde\buZ,
		\buV^{\widetilde\buZ}}
	\right\|_{\infty,\infty,2}
	\leq&
	\left(
		\int_I
		\left\|
			\bupi^{*,\buZ,\buV^{\buZ},u}
			-
			\bupi^{*,\widetilde\buZ,
			\buV^{\widetilde\buZ},u}
		\right\|_{\infty,\infty}^2du
	\right)^{1/2}
	\\
	\leq&
	\left(
		L_{a,Z}
		+
		L_{a,V}K_{V,Z}
	\right)
	\left(
		\int_I
		\|\bZ^u-\widetilde\bZ^u\|_\infty^2du
	\right)^{1/2}
	\\
	\leq&
	\sqrt T
	\left(
		L_{a,Z}
		+
		L_{a,V}K_{V,Z}
	\right)
	\|\buZ-\widetilde\buZ\|_{\infty,2}.
\end{align*}
Thus, item~\ref{lemma_sub_ZtoPi_lip} holds with $K_{\pi,Z} = \sqrt T \left( L_{a,Z} + L_{a,V}K_{V,Z} \right).$

Notice that the preceding calculation also gives the stronger estimate
\begin{equation}\label{eq:strong-policy-estimate}
	\left(
		\int_I
		\left\|
			\bupi^{*,\buZ,\buV^{\buZ},u}
			-
			\bupi^{*,\widetilde\buZ,
			\buV^{\widetilde\buZ},u}
		\right\|_{\infty,\infty}^2du
	\right)^{1/2}
	\leq
	K_{\pi,Z}
	\|\buZ-\widetilde\buZ\|_{\infty,2}.
\end{equation}

Finally, we prove the Lipschitz continuity of
$\buZ\mapsto\bumu^{\buZ}$ in
item~\ref{lemma_sub_ZtoMu_lip}. By~\eqref{eq_mudiff-piZ},
\begin{align*}
	\|\bumu^{\buZ}
	-
	\bumu^{\widetilde\buZ}\|_{\infty,\infty,2}
	&\leq
	\left(
		\int_I
		\|\bmu^{\buZ,u}
		-
		\bmu^{\widetilde\buZ,u}\|_{\infty,\infty}^2du
	\right)^{1/2}
	\\
	&\leq
	K_m
	\left(
		\int_I
		\max\left\{
			\|\bZ^u-\widetilde\bZ^u\|_\infty,
			\left\|
				\bupi^{*,\buZ,\buV^{\buZ},u}
				-
				\bupi^{*,\widetilde\buZ,
				\buV^{\widetilde\buZ},u}
			\right\|_{\infty,\infty}
		\right\}^2du
	\right)^{1/2}
	\\
	&\leq
	K_m
	\left(
		\int_I
		\|\bZ^u-\widetilde\bZ^u\|_\infty^2du
	\right)^{1/2}
	+
	K_m
	\left(
		\int_I
		\left\|
			\bupi^{*,\buZ,\buV^{\buZ},u}
			-
			\bupi^{*,\widetilde\buZ,
			\buV^{\widetilde\buZ},u}
		\right\|_{\infty,\infty}^2du
	\right)^{1/2}
	\\
	&\leq
	K_m
	\left(
		\sqrt T+K_{\pi,Z}
	\right)
	\|\buZ-\widetilde\buZ\|_{\infty,2},
\end{align*}
where the last inequality follows from
\eqref{eq:strong-policy-estimate}. Therefore,
\[
	\|\bumu^{\buZ}
	-
	\bumu^{\widetilde\buZ}\|_{\infty,\infty,2}
	\leq
	K_{\mu,Z}
	\|\buZ-\widetilde\buZ\|_{\infty,2},
\]
with $K_{\mu,Z} = K_m \left( \sqrt T+K_{\pi,Z} \right)$.

\qed

\subsection{Proof of Proposition~\ref{lemma_XZconvergence}}\label{proof:lemma_XZconvergence}
The proof relies on several auxiliary results presented below.
Throughout this subsection, suppose that
Assumptions~\ref{aasp_graphon-assumption},
\ref{asm:varphi-regularity},
\ref{assp_diff_concave_faz},
\ref{assp_P_lip},
\ref{assp_varphi_lip}, and~\ref{assp_cutnorm_convergence} hold.

For each $i\in[N]$, let
$
	\upi_t^{N,i}
	=
	\upi_t^{N,(\widetilde\bpi,\Lambda^N(\bupi)^{-i})}
$
be defined, for all $x\in\cX$, $u\in I$, and $t\in\cT$, by
\begin{equation}\label{eq_piNlambda}
	\pi_t^{N,i,u}(x)
	:=
	\sum_{j\in[N]\setminus\{i\}}
	\mathds{1}_{u\in
	\left(\frac{j-1}{N},\joN\right]}
	\Lambda^N(\bupi)_t^j(x)
	+
	\mathds{1}_{u\in
	\left(\frac{i-1}{N},\ioN\right]}
	\widetilde\pi_t(x).
\end{equation}
When the deviating index $i$ is fixed, we write $\upi_t^N$ in place
of $\upi_t^{N,i}$.

Moreover, for $u\in I$, $G\in\cW$,
$\umu\in L^2(I;\cP(\cX))$,
$\upi\in L^2(I;\cA)^\cX$, and $t\in\cT$, we define
\begin{equation}\label{eq_Z_Gmupi_func}
	\mathfrak Z_t^u(G,\umu,\upi)
	:=
	\int_{v\in I}
	G(u,v)
	\sum_{x\in\cX}
	\varphi_t(x,\pi^v(x))
	\mu^v(x)\,dv.
\end{equation}
For a bounded measurable function
$f:\cX\times I\to\RR$, we also write
$
	\umu(f)
	:=
	\int_I
	\sum_{x\in\cX}
	f^u(x)\mu^u(x)\,du.
$
In particular, for a temporal sequence $\bumu$, the notation
$\umu_t(f)$ refers to the preceding expression evaluated at time $t$.

\begin{lemma}\label{lemma_ZtoZ_convergence}
	Let $\bupi\in\buPi^{L_\pi}$,
	$\widetilde\bpi\in\bfPi$, and $\bumu\in\bucM$. For each
	$i\in[N]$, consider the step policy $\upi_t^{N,i} = \upi_t^{N,(\widetilde\bpi,\Lambda^N(\bupi)^{-i})}$
	defined in~\eqref{eq_piNlambda}. If $(G_N)_N$ is a sequence of
	step-graphons converging to a graphon $G$ under
	Assumption~\ref{assp_cutnorm_convergence}, then, for every
	$\epsilon,p>0$, there exists $N_0$ such that, for all
	$N\geq N_0$, there exists a subset
	$\cI_N\subseteq[N]$ with
	$|\cI_N|\geq(1-p)N$ such that, for every $i\in\cI_N$,
	\[
		\left|
			\mathfrak Z_t^\ioN
			\left(
				G,\umu_t,\upi_t
			\right)
			-
			\mathfrak Z_t^\ioN
			\left(
				G_N,\umu_t,\upi_t^{N,i}
			\right)
		\right|
		\leq\epsilon,
		\qquad
		\forall\,t\in\cT.
	\]
	The integer $N_0$ can be chosen uniformly with respect to
	$\bupi\in\buPi^{L_\pi}$,
	$\widetilde\bpi\in\bfPi$, and $\bumu\in\bucM$.
\end{lemma}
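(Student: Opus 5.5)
We split the difference by the triangle inequality into a \emph{graphon term} and a \emph{policy term}:
\begin{align*}
\bigl|\mathfrak Z_t^{\ioN}(G,\umu_t,\upi_t)-\mathfrak Z_t^{\ioN}(G_N,\umu_t,\upi_t^{N,i})\bigr|
&\leq
\bigl|\mathfrak Z_t^{\ioN}(G,\umu_t,\upi_t)-\mathfrak Z_t^{\ioN}(G_N,\umu_t,\upi_t)\bigr|
\\
&\quad+
\bigl|\mathfrak Z_t^{\ioN}(G_N,\umu_t,\upi_t)-\mathfrak Z_t^{\ioN}(G_N,\umu_t,\upi_t^{N,i})\bigr|.
\end{align*}

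For the policy term, $\upi_t^{N,i}$ differs from $\upi_t$ in two ways. On each block $(\frac{j-1}{N},\frac jN]$ with $j\neq i$ the value $\pi_t^{j/N}(x)$ replaces $\pi_t^v(x)$, so by the Lipschitz representative $|\pi_t^v(x)-\pi_t^{j/N}(x)|\le L_\pi/N$. On the block of $i$ the deviation $\widetilde\pi_t$ appears, but it has Lebesgue measure $1/N$. Using $0\le G_N\le1$, $\sum_x\mu_t^v(x)=1$, and Assumption~\ref{assp_varphi_lip}, the policy term is bounded by $L_\varphi L_\pi/N+2C_\varphi/N$ for every $i$ and $t$. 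This bound is uniform in $\bupi$, $\widetilde\bpi$ and $\bumu$.

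For the graphon term, set $g_t(v):=\sum_x\varphi_t(x,\pi_t^v(x))\mu_t^v(x)$. Then $|g_t|\le C_\varphi$, and the term equals $|((G-G_N)g_t)(i/N)|$. Operator-norm convergence only controls $\|(G-G_N)g_t\|_{L^1}\le C_\varphi\|G-G_N\|_{\infty\to1}\to0$, uniformly in $g_t$. We pass from this $L^1$ control to most sampled labels in two steps.

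First, $G_N g_t$ is constant on each block $(\frac{i-1}{N},\frac iN]$. Also $Gg_t$ is uniformly continuous in $u$ with modulus $C_\varphi\omega^G$, by Assumption~\ref{aasp_graphon-assumption} and the Cauchy--Schwarz inequality, uniformly in $g_t$. Hence
\[
\frac1N\sum_{i=1}^N\bigl|((G-G_N)g_t)(i/N)\bigr|\le \|(G-G_N)g_t\|_{L^1}+C_\varphi\,\omega^G(1/N)=:\eta_N ,
\]
with $\eta_N\to0$ uniformly in $(\bupi,\bumu)$.

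Second, Markov's inequality shows that the number of $i$ with $|((G-G_N)g_t)(i/N)|>\epsilon/2$ is at most $2N\eta_N/\epsilon$. Taking the union over the $T$ times, the exceptional set has size at most $2TN\eta_N/\epsilon\le pN$ once $N$ is large. We choose $N_0$ so that this holds and also $(L_\varphi L_\pi+2C_\varphi)/N_0\le\epsilon/2$. Let $\cI_N$ be the complement of the exceptional set.

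One subtlety is that $g_t$ does not depend on $i$, because the deviation was moved into the policy term. This is exactly why the split is taken in this order, and it is what allows one exceptional set to serve all $i$ at once.

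The main obstacle is this transfer from $L^1$/cut-norm control to pointwise control at the sampled labels $i/N$. It is handled by the block-constancy of $G_Ng_t$ together with equicontinuity of $Gg_t$. It is the reason the conclusion holds only on a large subset $\cI_N$ rather than for all $i$.
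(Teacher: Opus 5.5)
Your proof is correct and follows the paper's argument essentially step for step. You use the same split into a graphon term, taken with the undeviated policy $\upi_t$ so the test function is independent of $i$ and $\widetilde\bpi$, and a policy term of order $1/N$; the graphon term is handled by comparing the value at $i/N$ with the block average via $\omega^G(1/N)$ and then applying Markov's inequality to the $\|\cdot\|_{\infty\to1}$ bound. The only differences are cosmetic: the paper bundles the times into $U_i^N=\max_t(\cdot)$ before a single Markov step, and it bounds the deviating block by $2L_\varphi C_\cA/N$ rather than your $2C_\varphi/N$.
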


\begin{proof}
	See Section~\ref{proof:lemma_ZtoZ_convergence}.
\end{proof}

\begin{lemma}\label{lemma_muNtomu}
	Let $\bupi\in\buPi^{L_\pi}$ and
	$\widetilde\bpi\in\bfPi$. For each $i\in[N]$, consider the state
	distribution sequences
	\[
		\bumu^{N,i}
		=
		\left(
			\mu_t^{N,(\widetilde\bpi,
			\Lambda^N(\bupi)^{-i}),u}
		\right)_{u\in I,t\in\ocT}
		\qquad\mbox{and}\qquad
		\bumu
		=
		\left(
			\mu_t^{\bupi,u}
		\right)_{u\in I,t\in\ocT},
	\]
	defined respectively in~\eqref{eq_muN} and
	Definition~\ref{def_GMFG_mu_Z}. For $M>0$, let $\cF_M$ be the set of measurable functions $\cX\times I\ni(x,u) \longmapsto f^u(x):=f(x,u)\in\RR$ satisfying $\sup_{x\in\cX,\,u\in I}|f^u(x)|\leq M$. 
	Then
	\[
		\sup_{f\in\cF_M}
		\E\left[
			\left|
				\umu_t^{N,i}(f)-\umu_t(f)
			\right|
		\right]
		\longrightarrow0
		\qquad\mbox{as }N\to+\infty,
		\qquad
		\forall\,t\in\ocT.
	\]
	The convergence is uniform with respect to
	$i\in[N]$, $\bupi\in\buPi^{L_\pi}$, and
	$\widetilde\bpi\in\bfPi$.
\end{lemma}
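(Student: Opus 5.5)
We will prove Lemma~\ref{lemma_muNtomu} by forward induction on $t\in\ocT$, writing $\Delta_t:=\sup_{f\in\cF_M}\E[|\umu_t^{N,i}(f)-\umu_t(f)|]$. Since $\umu(f)$ is linear in $f$ and bounded by $M$, it suffices to treat $M=1$. At $t=0$ we have
\[
\umu_0^{N,i}(f)=\frac1N\sum_{j}\sum_x f^{j,N}(x)\delta_{X_0^j}(x),
\qquad f^{j,N}(x):=N\int_{(\frac{j-1}{N},\frac jN]}f^u(x)\,du,
\]
and $|f^{j,N}|\le 1$. Because the initial states are independent with law $\mu_{\mathrm{init}}$, the mean is exactly $\umu_0(f)$ and the variance is at most $1/N$. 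The same bound holds with the supremum taken inside the expectation, up to a factor $|\cX|$: for each fixed $x$, the map $f\mapsto\umu_0^{N,i}(f)$ only sees the block averages $f^{j,N}(x)$, so we can bound $\sup_f$ by $\sum_x\E\big|\frac1N\sum_j c_j(\mathds 1_{X_0^j=x}-\mu_{\mathrm{init}}(x))\big|$, where the coefficients $c_j$ are deterministic and $|c_j|\le1$.

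\textbf{Inductive step.} Assume the estimate at time $t$ and fix $f$. Define $g^u(x'):=\sum_x f^u(x)P_t(x\mid x',\pi_t^{u}(x'),Z_t^{\bupi,u})$, which is the one-step expectation in the graphon game, so that $\umu_{t+1}(f)=\umu_t(g)$. We then split
\[
\umu_{t+1}^{N,i}(f)-\umu_{t+1}(f)=A_1+A_2+A_3 .
\]

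The term $A_1$ is the martingale difference $\umu_{t+1}^{N,i}(f)-\E[\umu_{t+1}^{N,i}(f)\mid X_t]$. By the conditional independence of the transitions, its second moment is at most $1/N$.

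The term $A_2$ is $\frac1N\sum_j\sum_x\big(f^{j,N}P_t(\cdot\mid X_t^j,\pi_t^{N,i,j}(X_t^j),Z_t^{N,j})-\text{same with }(\pi_t^{j/N},Z_t^{\bupi,j/N})\big)$. We control it as follows:
\begin{itemize}
\item For $j\ne i$ the policy used is exactly $\bpi^{j/N}$. Its difference from $\pi_t^u$ for $u$ in the block is at most $L_\pi/N$, and through $L_{P,a}$ this contributes $O(1/N)$.
\item The deviating player $j=i$ contributes $O(1/N)$.
\item The aggregate mismatch contributes $L_{P,Z}|\cX|\frac1N\sum_j\E|Z_t^{N,j}-Z_t^{\bupi,j/N}|$ (together with the $u$-continuity of $Z^{\bupi}$ within blocks, from Assumption~\ref{aasp_graphon-assumption}).
\end{itemize}

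The term $A_3=\umu_t^{N,i}(g)-\umu_t(g)$ is bounded by the induction hypothesis, since $|g|\le1$.

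\textbf{Main obstacle: the aggregate term.} We write
\[
Z_t^{N,j}=\mathfrak Z_t^{j/N}(G_N,\umu_t^{N,i},\upi_t^{N,i}),
\qquad
Z_t^{\bupi,j/N}=\mathfrak Z_t^{j/N}(G,\umu_t,\upi_t),
\]
and insert the intermediate quantity $\mathfrak Z_t^{j/N}(G_N,\umu_t,\upi_t^{N,i})$.
\begin{itemize}
\item The second difference is handled by Lemma~\ref{lemma_ZtoZ_convergence}, except on at most $pN$ labels. This costs at most $2C_\varphi p$ in the average over $j$, which suffices because $p$ is arbitrary.
\item The first difference equals $\umu_t^{N,i}(h_j)-\umu_t(h_j)$ with $h_j^v(x)=G_N(j/N,v)\varphi_t(x,\pi^{N,i,v}_t(x))$, where $|h_j|\le C_\varphi$. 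The induction hypothesis applies here precisely because $\cF_M$ allows arbitrary dependence on $u$, and $h_j$ is deterministic given $i$. We then average the resulting bound over $j$.
\end{itemize}
Combining the three terms gives $\Delta_{t+1}\le C(\Delta_t+N^{-1/2}+1/N)+2C_\varphi L_{P,Z}|\cX|\,p+o(1)$. Letting $N\to\infty$ and then $p\to0$ yields $\Delta_{t+1}\to0$. All constants depend only on Lipschitz and boundedness data, and Lemma~\ref{lemma_ZtoZ_convergence} is uniform, so the convergence is uniform in $i$, $\bupi$ and $\widetilde\bpi$.
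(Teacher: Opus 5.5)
Your proof is correct and has the same skeleton as the paper's: induction on $t$, a conditional-variance term ($A_1$, the paper's $I_1$), and a propagation term with the pushed-forward test function $g$ ($A_3$, the paper's $I_5$). The real difference is the kernel-mismatch term $A_2$. The paper splits it as $I_2+I_3+I_4$ and handles it entirely in $L^1(du)$. It keeps the aggregate as the function $u\mapsto\mathfrak Z_t^u(G_N,\umu_t^{N},\upi_t^{N})$ on all of $I$; this is constant on blocks, hence equal to the finite-player aggregate. It then replaces the step policy by $\upi_t$ at cost $O(1/N)$. Next it replaces $G_N$ by $G$ realization by realization, using $\int_I|\int_I(G_N-G)(u,v)\psi(v)\,dv|\,du\le C_\varphi\|G_N-G\|_{\infty\to1}$ for the random but bounded $\psi$. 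Finally it replaces $\umu_t^N$ by $\umu_t$ inside the aggregate, via the induction hypothesis with test function $G(u,v)\varphi_t(y,\pi_t^v(y))$ and Fubini.

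You instead evaluate aggregates at the labels $j/N$. You first replace $\umu_t^{N,i}$ by $\umu_t$, keeping $G_N$ and the step policy (via $h_j$). You then handle $G_N\to G$ and the policy swap, with the deterministic $\umu_t$, through Lemma~\ref{lemma_ZtoZ_convergence} plus within-block continuity of $u\mapsto Z_t^{\bupi,u}$. This works, but it costs more in three ways:
\begin{itemize}
\item It needs Assumption~\ref{aasp_graphon-assumption}, through $\omega^G(1/N)$, both within blocks and in Term~1.a of Lemma~\ref{lemma_ZtoZ_convergence}. The paper's proof of this lemma never uses that assumption.
\item It gives only a limiting argument ($N\to\infty$, then $\epsilon,p\to0$), not an explicit recursive bound in $\|G_N-G\|_{\infty\to1}$, $N^{-1/2}$ and $N^{-1}$.
\item Lemma~\ref{lemma_ZtoZ_convergence} as stated ties the evaluation label to the deviating block ($\mathfrak Z^{i/N}$ with $\upi^{N,i}$). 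You need $\mathfrak Z^{j/N}$ with $\upi^{N,i}$ for fixed $i$ and most $j$. This does follow from its proof, since the exceptional set depends only on $(\bupi,\bumu)$ and its Term~2 is $O(1/N)$ for any label/deviator pair, but it is not the cited statement.
\end{itemize}
Since you average over $j$ anyway, you do not need the exceptional-set device at all. The averaged bound $\frac1N\sum_jU_j^N\le|\cT|C_\varphi\|G-G_N\|_{\infty\to1}$ from that proof controls the average directly; it is the same operator-norm estimate as the paper's $I_3$.

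One correction: drop the remark at $t=0$ about moving the supremum inside the expectation. Realization by realization, you may choose $f^u(x)=\mathrm{sign}\bigl(\mathds 1_{x=X_0^j}-\mu_{\mathrm{init}}(x)\bigr)$ on block $j$. This gives $\E\sup_f|\umu_0^{N,i}(f)-\umu_0(f)|\ge2\bigl(1-\sum_x\mu_{\mathrm{init}}(x)^2\bigr)$, which does not vanish unless $\mu_{\mathrm{init}}$ is a Dirac mass. Your deterministic-coefficient argument only bounds $\sup_f\E$. That is all the lemma asserts and all your induction uses, so nothing else is affected.
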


\begin{proof}
	See Section~\ref{proof:lemma_muNtomu}.
\end{proof}

\begin{lemma}\label{coro_ZNtoZ_with_mu}
Let $\bupi\in\buPi^{L_\pi}$ and $\widetilde\bpi\in\bfPi$. For each $i\in[N]$, let $\upi_t^{N,i} = \upi_t^{N,(\widetilde\bpi,\Lambda^N(\bupi)^{-i})} \in L^2(I;\cA)^\cX$ be defined as in~\eqref{eq_piNlambda}, and let $\bumu^{N,i} = \left( \mu_t^{N,(\widetilde\bpi, \Lambda^N(\bupi)^{-i}),u} \right)_{u\in I,t\in\ocT}$ be defined as in~\eqref{eq_muN}. Let $\bumu = \bumu^\bupi = \left( \mu_t^{\bupi,u} \right)_{u\in I,t\in\ocT}$ be the graphon mean-field generated by the background policy $\bupi$. Recall the notation $\mathfrak Z$ defined in~\eqref{eq_Z_Gmupi_func}.
	Then, for every $\epsilon,p>0$, there exists $N_0$ such that, for
	all $N\geq N_0$, there exists a subset
	$\cI_N\subseteq[N]$ with
	$|\cI_N|\geq(1-p)N$ such that, for every $i\in\cI_N$,
	\[
		\E\left[
			\left|
				\mathfrak Z_t^\ioN
				\left(
					G_N,\umu_t^{N,i},\upi_t^{N,i}
				\right)
				-
				\mathfrak Z_t^\ioN
				\left(
					G,\umu_t,\upi_t
				\right)
			\right|
		\right]
		\leq\epsilon,
		\qquad
		\forall\,t\in\cT.
	\]
	The integer $N_0$ can be chosen uniformly with respect to
	$\bupi\in\buPi^{L_\pi}$ and
	$\widetilde\bpi\in\bfPi$.
\end{lemma}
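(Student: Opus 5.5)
We split the quantity to be controlled by the triangle inequality, inserting the intermediate term $\mathfrak Z_t^\ioN(G_N,\umu_t,\upi_t^{N,i})$:
\begin{align*}
	\left|
		\mathfrak Z_t^\ioN(G_N,\umu_t^{N,i},\upi_t^{N,i})
		-
		\mathfrak Z_t^\ioN(G,\umu_t,\upi_t)
	\right|
	&\leq
	\left|
		\mathfrak Z_t^\ioN(G_N,\umu_t^{N,i},\upi_t^{N,i})
		-
		\mathfrak Z_t^\ioN(G_N,\umu_t,\upi_t^{N,i})
	\right|
	\\
	&\quad+
	\left|
		\mathfrak Z_t^\ioN(G_N,\umu_t,\upi_t^{N,i})
		-
		\mathfrak Z_t^\ioN(G,\umu_t,\upi_t)
	\right|.
\end{align*}
The second term is deterministic. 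By Lemma~\ref{lemma_ZtoZ_convergence}, applied with tolerance $\epsilon/2$, proportion $p$, and the fixed flow $\bumu=\bumu^\bupi$, it is at most $\epsilon/2$ for all $t\in\cT$ and all $i$ in a set $\cI_N$ with $|\cI_N|\geq(1-p)N$, once $N\geq N_0^{(1)}$. This $N_0^{(1)}$ is uniform in $\bupi$, $\widetilde\bpi$ and $\bumu$.

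The first term carries the randomness, and we rewrite it in the form required by Lemma~\ref{lemma_muNtomu}. Since $\mathfrak Z$ is linear in $\umu$, the difference equals $\umu_t^{N,i}(f)-\umu_t(f)$ with the test function
\[
	f^v(x):=G_N(\ioN,v)\,\varphi_t(x,\pi_t^{N,i,v}(x)).
\]
Here $G_N(\ioN,\cdot)$ is the $i$-th row of the step graphon and $\upi_t^{N,i}$ is the deterministic step policy~\eqref{eq_piNlambda}. Hence $f$ is deterministic and measurable, and since $0\leq G_N\leq 1$ and $|\varphi|\leq C_\varphi$ (Assumption~\ref{assp_varphi_lip}), it satisfies $|f|\leq C_\varphi$, so $f\in\cF_{C_\varphi}$.

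The point that requires care is that $f$ depends on $N$, on $i$ and on $\widetilde\bpi$. Lemma~\ref{lemma_muNtomu}, however, controls $\sup_{f\in\cF_M}\E|\umu_t^{N,i}(f)-\umu_t(f)|$, with the supremum taken outside the expectation. The convergence there is also uniform in $i$, $\bupi$ and $\widetilde\bpi$. Because our $f$ is non-random, we may bound $\E|\umu_t^{N,i}(f)-\umu_t(f)|$ by this supremum with $M=C_\varphi$. This gives $N_0^{(2)}$ such that the first term has expectation at most $\epsilon/2$ for all $i\in[N]$, all $t\in\ocT$ (a finite set) and all $N\geq N_0^{(2)}$.

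Taking expectations in the triangle inequality and setting $N_0:=\max\{N_0^{(1)},N_0^{(2)}\}$, we obtain the claimed bound $\epsilon$ for every $i\in\cI_N$ and every $t\in\cT$. Uniformity of $N_0$ in $\bupi\in\buPi^{L_\pi}$ and $\widetilde\bpi\in\bfPi$ follows from the uniformity statements of the two lemmas. The only step that is more than bookkeeping is matching the first term to the uniform-in-$f$ statement of Lemma~\ref{lemma_muNtomu}. Had that lemma been stated for fixed $f$ only, this step would be the main obstacle.
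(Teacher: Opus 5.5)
Your proposal is correct and is essentially the paper's proof. You insert the same intermediate term $\mathfrak Z_t^{\ioN}(G_N,\umu_t,\upi_t^{N,i})$, control the random part through Lemma~\ref{lemma_muNtomu} using the deterministic test function $G_N(\ioN,v)\,\varphi_t(x,\pi_t^{N,i,v}(x))\in\cF_{C_\varphi}$ (exploiting, as the paper does, that the supremum over $f$ sits outside the expectation and is uniform in $i$ and the policies), and handle the deterministic part with Lemma~\ref{lemma_ZtoZ_convergence}. The only harmless slip is writing $t\in\ocT$ for the first term, where only $t\in\cT$ makes sense since $\varphi_t$ is defined on $\cT$.
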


\begin{proof}
	See Section~\ref{proof:coro_ZNtoZ_with_mu}.
\end{proof}

We are now ready to prove Proposition~\ref{lemma_XZconvergence}.

\begin{proof}[Proof of Proposition~\ref{lemma_XZconvergence}]
	We split the proof into two steps.

	\noindent
	\textbf{Step 1:
	\eqref{eq_X_convergence} implies
	\eqref{eq_XZ_convergence}.}
	First, we prove that, for each $t\in\cT$,
	\eqref{eq_X_convergence} at time $t$ implies
	\eqref{eq_XZ_convergence} at time $t$.
	For any $t\in\cT$,
	\begin{align}
		 & \bigg |\,  \E \Big[\, \mathfrak{h} \big(X_t^i, Z^{N,(\tilde\bpi,\Lambda^N(\bupi)^{-i}),i}_t \,\big) \Big]
		- \E \Big[\, \mathfrak{h}(X_t^{\ioN}, Z^{\nu^\bupi,{\ioN}}_t\,  \big)\Big]  \bigg |
		\notag
		\\
		 & \leq  \bigg |  \E \Big[\, \mathfrak{h} \big(X_t^i, Z^{N,(\tilde\bpi,\Lambda^N(\bupi)^{-i}),i}_t \,\big) \Big] -  \E \Big[\, \mathfrak{h} \big( X_t^i, Z^{\nu^\bupi,{\ioN}}_t \,\big) \Big]\bigg |
		+ \bigg |\E \Big[\, \mathfrak{h} \big( X_t^i, Z^{\nu^\bupi,{\ioN}}_t \,\big) \Big] - \E \Big[\, \mathfrak{h}(X_t^{\ioN}, Z^{\nu^\bupi,{\ioN}}_t\,  \big)\Big] \bigg |
		\notag
		\\
		 & \leq  L_{\mathfrak{h}} \,
		\underbracket{\E \Big[ \Big|\,Z^{N,(\tilde\bpi,\Lambda^N(\bupi)^{-i}), i}_t - Z^{\nu^{\bupi},\ioN}_t \Big| \Big]}_{\text{Term 1}} + \underbracket{\bigg | \E \Big[\, \mathfrak{h} \big(X_t^i, Z^{\nu^\bupi,{\ioN}}_t\big) \Big]- \E \Big[\, \mathfrak{h} ( X_t^{\ioN},Z^{\nu^\bupi,{\ioN}}_t) \Big]   \bigg |}_{\text{Term 2}}.
		\label{eq:frakh-term1-term2}
	\end{align}

	Fix $\epsilon,p>0$. Term~2 in
	\eqref{eq:frakh-term1-term2} is bounded by
	$\epsilon/2$ using~\eqref{eq_X_convergence}, applied with
	tolerance $\epsilon/2$ and exceptional proportion $p/2$, and with
	the test function $h(x) = \mathfrak{h} \bigl( x,Z_t^{\bunu^\bupi,\ioN} \bigr)$.
	This function belongs to $\cH$ because
	$\mathfrak{h}$ is bounded by $C_h$. The estimate is uniform with
	respect to
	$\bupi\in\buPi^{L_\pi}$,
	$\widetilde\bpi\in\bfPi$, and
	$\mathfrak{h}\in\widetilde\cH$.
	We next consider Term~1 and apply
	Lemma~\ref{coro_ZNtoZ_with_mu}. We first match the notation in
	Term~1 with the notation $\mathfrak Z$ used in that lemma.
	Let $I_j^N := \bigl( \frac{j-1}{N},\frac{j}{N} \bigr]$, $j\in[N]$,
	and let $\bm{\pi}^{(N)} = \left( \widetilde\bpi, \Lambda^N(\bupi)^{-i} \right)$.
	Recalling the definition of
	$Z_t^{N,\bm{\pi}^{(N)},i}$ in~\eqref{eq:ZNpii} and the definition
	of $G_N$ in~\eqref{eq_step_graphon}, we have, for
	$u\in I_i^N$,
	\begin{align}
		Z_t^{N,(\widetilde\bpi,
		\Lambda^N(\bupi)^{-i}),i}
		&=
		\frac1N
		\sum_{j=1}^N
		\zeta_{ij}^N
		\varphi_t
		\left(
			X_t^j,
			\pi_t^{(N),j}(X_t^j)
		\right)
		\notag
		\\
		&=
		\sum_{j=1}^N
		\int_{v\in I_j^N}
		G_N(u,v)
		\sum_{x\in\cX}
		\varphi_t
		\left(
			x,\pi_t^{(N),j}(x)
		\right)
		\mathds{1}_{X_t^j=x}\,dv.
		\label{eq:tmp-ZN}
	\end{align}
	On the other hand, by
	\eqref{eq_Z_Gmupi_func},
	\eqref{eq_piNlambda}, and~\eqref{eq_muN},
	\begin{align}
		\mathfrak Z_t^u
		\left(
			G_N,\umu_t^{N,i},\upi_t^{N,i}
		\right)
		&=
		\int_{v\in I}
		G_N(u,v)
		\sum_{x\in\cX}
		\varphi_t
		\left(
			x,\pi_t^{N,i,v}(x)
		\right)
		\mu_t^{N,i,v}(x)\,dv
		\notag
		\\
		&=
		\sum_{j=1}^N
		\int_{v\in I_j^N}
		G_N(u,v)
		\sum_{x\in\cX}
		\varphi_t
		\left(
			x,\pi_t^{N,i,v}(x)
		\right)
		\mathds{1}_{X_t^j=x}\,dv.
		\label{eq:tmp-frakZN}
	\end{align}
	Hence, identifying
	\eqref{eq:tmp-ZN} and~\eqref{eq:tmp-frakZN} at
	$u=i/N$, we obtain
	\[
		\mathfrak Z_t^\ioN
		\left(
			G_N,\umu_t^{N,i},\upi_t^{N,i}
		\right)
		=
		Z_t^{N,(\widetilde\bpi,
		\Lambda^N(\bupi)^{-i}),i}.
	\]

	We now turn to
	$Z_t^{\bunu^\bupi,\ioN}$. By~\eqref{def_Znu}, recalling that
	$\varphi$ is independent of the player label in this section,
	\begin{align*}
		Z_t^{\bunu^\bupi,\ioN}
		&=
		\int_{v\in I}
		G(\ioN,v)
		\sum_{x\in\cX}
		\int_\cA
		\varphi_t(x,a)
		\nu_t^{\bupi,v}(x,da)\,dv
		\\
		&=
		\int_{v\in I}
		G(\ioN,v)
		\sum_{x\in\cX}
		\varphi_t
		\left(
			x,\pi_t^v(x)
		\right)
		\mu_t^{\bupi,v}(x)\,dv
		\\
		&=
		\mathfrak Z_t^\ioN
		\left(
			G,\umu_t^\bupi,\upi_t
		\right).
	\end{align*}

	We can therefore apply
	Lemma~\ref{coro_ZNtoZ_with_mu} to Term~1 with tolerance
	$\epsilon/(2L_h)$ and exceptional proportion $p/2$. By the
	uniformity in that lemma, the same subset can be chosen
	independently of
	$\widetilde\bpi\in\bfPi$. Intersecting the subsets used for
	Terms~1 and~2 gives a subset
	$\cI_N\subseteq[N]$ satisfying $|\cI_N| \geq (1-p)N$
	such that, for every $i\in\cI_N$,
	\[
		\sup_{\widetilde\bpi\in\bfPi}
		\sup_{\mathfrak{h}\in\widetilde\cH}
		\left|
			\E\left[
				\mathfrak{h}
				\left(
					X_t^i,
					Z_t^{N,(\widetilde\bpi,
					\Lambda^N(\bupi)^{-i}),i}
				\right)
			\right]
			-
			\E\left[
				\mathfrak{h}
				\left(
					X_t^{\ioN},
					Z_t^{\bunu^\bupi,\ioN}
				\right)
			\right]
		\right|
		<\epsilon.
	\]
	This proves~\eqref{eq_XZ_convergence} at time $t$.

	\medskip

	\noindent
	\textbf{Step 2: Proof of~\eqref{eq_X_convergence}.}
	We prove~\eqref{eq_X_convergence} by induction.

	At $t=0$, since $X_0^i$ and $X_0^{\ioN}$ have the same
	distribution,
	\[
		\sup_{\widetilde\bpi\in\bfPi}
		\sup_{h\in\cH}
		\left|
			\E[h(X_0^i)]
			-
			\E[h(X_0^{\ioN})]
		\right|
		=
		0,
		\qquad
		\forall\,i\in[N].
	\]

	Let $t\in\cT$ and assume that
	\eqref{eq_X_convergence} holds at time $t$, in the sense that,
	for every $\epsilon,p>0$, there exists $N_0$ such that, for all
	$N\geq N_0$, there exists a subset
	$\cI_N\subseteq[N]$ with
	$|\cI_N|\geq(1-p)N$ on which the estimate holds uniformly over
	$\widetilde\bpi\in\bfPi$ and $h\in\cH$.

	For $h\in\cH$ and
	$\widetilde\bpi\in\bfPi$, define
	\[
		\mathfrak{h}(x,z)
		:=
		\sum_{y\in\cX}
		h(y)
		P_t
		\left(
			y\mid
			x,\widetilde\pi_t(x),z
		\right).
	\]
	Since $P_t(\cdot\mid x,a,z)$ is a probability distribution,
	$\mathfrak{h}$ is bounded by $C_h$. Moreover, by
	Assumption~\ref{assp_P_lip}%
	\ref{assp_z_to_P_lip},
	\begin{align*}
		|\mathfrak{h}(x,z)-\mathfrak{h}(x,\widetilde z)|
		&\leq
		C_h
		\sum_{y\in\cX}
		\left|
			P_t
			\left(
				y\mid
				x,\widetilde\pi_t(x),z
			\right)
			-
			P_t
			\left(
				y\mid
				x,\widetilde\pi_t(x),\widetilde z
			\right)
		\right|
		\\
		&\leq
		C_h|\cX|L_{P,Z}
		|z-\widetilde z|.
	\end{align*}
	Thus, $\mathfrak{h}$ belongs to the class
	$\widetilde\cH$ with Lipschitz constant $L_h = C_h|\cX|L_{P,Z}$,
	uniformly in
	$h\in\cH$ and
	$\widetilde\bpi\in\bfPi$. By the Markov transition dynamics,
	\[
		\E[h(X_{t+1}^i)]
		=
		\E\left[
			\mathfrak{h}
			\left(
				X_t^i,
				Z_t^{N,(\widetilde\bpi,
				\Lambda^N(\bupi)^{-i}),i}
			\right)
		\right],
	\]
	and similarly,
	\[
		\E[h(X_{t+1}^{\ioN})]
		=
		\E\left[
			\mathfrak{h}
			\left(
				X_t^{\ioN},
				Z_t^{\bunu^\bupi,\ioN}
			\right)
		\right].
	\]
	By Step~1, the induction hypothesis at time $t$ implies
	\eqref{eq_XZ_convergence} at time $t$. Therefore, for every
	$\epsilon,p>0$, there exists $N_0$ such that, for all
	$N\geq N_0$, there exists a subset
	$\cI_N\subseteq[N]$ with
	$|\cI_N|\geq(1-p)N$ such that, for every $i\in\cI_N$,
	\begin{align*}
		&
		\sup_{\widetilde\bpi\in\bfPi}
		\sup_{h\in\cH}
		\left|
			\E[h(X_{t+1}^i)]
			-
			\E[h(X_{t+1}^{\ioN})]
		\right|
		\\
		&=
		\sup_{\widetilde\bpi\in\bfPi}
		\sup_{h\in\cH}
		\left|
			\E\left[
				\mathfrak{h}
				\left(
					X_t^i,
					Z_t^{N,(\widetilde\bpi,
					\Lambda^N(\bupi)^{-i}),i}
				\right)
			\right]
			-
			\E\left[
				\mathfrak{h}
				\left(
					X_t^{\ioN},
					Z_t^{\bunu^\bupi,\ioN}
				\right)
			\right]
		\right|
		<\epsilon.
	\end{align*}
	This proves~\eqref{eq_X_convergence} at time $t+1$ and completes
	the induction.

	We have thus proved~\eqref{eq_X_convergence} for every
	$t\in\ocT$, and Step~1 gives
	\eqref{eq_XZ_convergence} for every $t\in\cT$. To obtain a single
	subset on which all these estimates hold simultaneously, apply each
	fixed-time estimate with exceptional proportion $\frac{p}{2T+1}$
	and intersect the resulting $T+1$ subsets corresponding to
	\eqref{eq_X_convergence} and the $T$ subsets corresponding to
	\eqref{eq_XZ_convergence}. The resulting subset
	$\cI_N\subseteq[N]$ satisfies $|\cI_N| \geq (1-p)N$.
	Taking the maximum of the finitely many corresponding values of
	$N_0$ proves the proposition.
\end{proof}

\subsubsection{Proof of Lemma~\ref{lemma_ZtoZ_convergence}}
\label{proof:lemma_ZtoZ_convergence}

For all $t\in\cT$ and $i\in[N]$, by the triangle inequality, we have
\begin{align*}
	&
	\left|
		\mathfrak{Z}^{\frac{i}{N}}_t
		\left(G,\umu_t,\upi_t\right)
		-
		\mathfrak{Z}^{\frac{i}{N}}_t
		\left(G_N,\umu_t,\upi^N_t\right)
	\right|
	\\
	&\qquad
	\leq
	\underbracket{
	\left|
		\mathfrak{Z}^{\frac{i}{N}}_t
		\left(G,\umu_t,\upi_t\right)
		-
		\mathfrak{Z}^{\frac{i}{N}}_t
		\left(G_N,\umu_t,\upi_t\right)
	\right|
	}_{\text{Term 1}}
+
	\underbracket{
	\left|
		\mathfrak{Z}^{\frac{i}{N}}_t
		\left(G_N,\umu_t,\upi_t\right)
		-
		\mathfrak{Z}^{\frac{i}{N}}_t
		\left(G_N,\umu_t,\upi^N_t\right)
	\right|
	}_{\text{Term 2}}.
\end{align*}

We start by bounding Term~1:
\begin{align*}
	&
	\left|
		\mathfrak{Z}^{\frac{i}{N}}_t
		\left(G,\umu_t,\upi_t\right)
		-
		\mathfrak{Z}^{\frac{i}{N}}_t
		\left(G_N,\umu_t,\upi_t\right)
	\right|
	\\
	&=
	\left|
		\int_{v\in I}
		\left(
			G\left(\tfrac{i}{N},v\right)
			-
			G_N\left(\tfrac{i}{N},v\right)
		\right)
		\sum_{x\in\cX}
		\varphi_t(x,\pi_t^v(x))
		\mu_t^v(x)\,dv
	\right|
	\\
	&\leq
	\underbracket{
	\left|
		\int_{v\in I}
		\left(
			G\left(\tfrac{i}{N},v\right)
			-
			N\int_{w\in
			(\frac{i-1}{N},\frac{i}{N}]}
			G(w,v)\,dw
		\right)
		\sum_{x\in\cX}
		\varphi_t(x,\pi_t^v(x))
		\mu_t^v(x)\,dv
	\right|
	}_{\text{Term 1.a}}
	\\
	&\quad+
	\underbracket{
	\left|
		\int_{v\in I}
		\left(
			N\int_{w\in
			(\frac{i-1}{N},\frac{i}{N}]}
			G(w,v)\,dw
			-
			G_N\left(\tfrac{i}{N},v\right)
		\right)
		\sum_{x\in\cX}
		\varphi_t(x,\pi_t^v(x))
		\mu_t^v(x)\,dv
	\right|
	}_{\text{Term 1.b}}.
\end{align*}

First, we bound Term~1.a:
\begin{align*}
	&
	\left|
		\int_{v\in I}
		\left(
			G\left(\tfrac{i}{N},v\right)
			-
			N\int_{w\in
			(\frac{i-1}{N},\frac{i}{N}]}
			G(w,v)\,dw
		\right)
		\sum_{x\in\cX}
		\varphi_t(x,\pi_t^v(x))
		\mu_t^v(x)\,dv
	\right|
	\\
	&\leq
	\left|
		\int_{v\in I}
		N\int_{w\in
		(\frac{i-1}{N},\frac{i}{N}]}
		\left(
			G\left(\tfrac{\lceil Nw\rceil}{N},v\right)
			-
			G(w,v)
		\right)
		\sum_{x\in\cX}
		\varphi_t(x,\pi_t^v(x))
		\mu_t^v(x)\,dw\,dv
	\right|
	\\
	&\leq
	|\cX|\,\omega^G(1/N)\,C_\varphi,
\end{align*}
where we used the Cauchy--Schwarz inequality in the last step. Due to
Assumption~\ref{aasp_graphon-assumption}, this term converges to zero
as $N\to\infty$, uniformly with respect to
$\bupi\in\buPi^{L_\pi}$, $\widetilde\bpi\in\bfPi$, and
$\bumu\in\bucM$.

We next consider Term~1.b. Since $G_N$ is constant in its first
variable on each interval
$\left(\frac{i-1}{N},\frac{i}{N}\right]$, we have, for a.e.\ $v\in I$,
$
	G_N\left(\tfrac{i}{N},v\right)
	=
	N\int_{w\in
	(\frac{i-1}{N},\frac{i}{N}]}
	G_N(w,v)\,dw.
$
Therefore,
\begin{align*}
	\text{Term 1.b}
	&\leq
	N\int_{w\in
	(\frac{i-1}{N},\frac{i}{N}]}
	\left|
		\int_{v\in I}
		\left(
			G(w,v)-G_N(w,v)
		\right)
		\sum_{x\in\cX}
		\varphi_t(x,\pi_t^v(x))
		\mu_t^v(x)\,dv
	\right|dw.
\end{align*}

We define
\[
	U_i^N
	:=
	\max_{t\in\cT}
	N\int_{w\in
	(\frac{i-1}{N},\frac{i}{N}]}
	\left|
		\int_{v\in I}
		\left(
			G(w,v)-G_N(w,v)
		\right)
		\sum_{x\in\cX}
		\varphi_t(x,\pi_t^v(x))
		\mu_t^v(x)\,dv
	\right|dw.
\]
Then Term~1.b is bounded by $U_i^N$ for every $t\in\cT$. Moreover,
since
\[
	\left|
		\sum_{x\in\cX}
		\varphi_t(x,\pi_t^v(x))
		\mu_t^v(x)
	\right|
	\leq C_\varphi,
\]
we have
\begin{align*}
	\frac1N\sum_{i=1}^NU_i^N
	&\leq
	\sum_{t\in\cT}
	\int_{w\in I}
	\left|
		\int_{v\in I}
		\left(
			G(w,v)-G_N(w,v)
		\right)
		\sum_{x\in\cX}
		\varphi_t(x,\pi_t^v(x))
		\mu_t^v(x)\,dv
	\right|dw
	\\
	&\leq
	|\cT|\,C_\varphi
	\|G-G_N\|_{\infty\to1}
	\longrightarrow0
\end{align*}
by Assumption~\ref{assp_cutnorm_convergence}. This convergence is
uniform with respect to
$\bupi\in\buPi^{L_\pi}$ and $\bumu\in\bucM$.

Fix $\epsilon_1,p>0$, and define
\[
	B_N
	:=
	\left\{
		i\in[N]:
		U_i^N\geq\epsilon_1
	\right\}.
\]
By Markov's inequality,
\[
	\frac{|B_N|}{N}
	\leq
	\frac{1}{\epsilon_1}
	\frac1N\sum_{i=1}^NU_i^N
	\longrightarrow0.
\]
Hence, there exists $N_0\in\mathbb N$ such that, for all $N\geq N_0$, $|B_N|\leq pN$. Setting $\cI_N:=[N]\setminus B_N$, we conclude that $|\cI_N|\geq(1-p)N$ and $U_i^N<\epsilon_1$, for all $i\in\cI_N$.

We continue by bounding Term~2:
\begin{align*}
	&
	\left|
		\mathfrak{Z}^{\frac{i}{N}}_t
		\left(G_N,\umu_t,\upi_t\right)
		-
		\mathfrak{Z}^{\frac{i}{N}}_t
		\left(G_N,\umu_t,\upi_t^N\right)
	\right|
	\\
	&=
	\left|
		\int_{v\in I}
		G_N\left(\tfrac{i}{N},v\right)
		\sum_{x\in\cX}
		\left(
			\varphi_t(x,\pi_t^{N,v}(x))
			-
			\varphi_t(x,\pi_t^v(x))
		\right)
		\mu_t^v(x)\,dv
	\right|
	\\
	&\leq
	\int_{v\in I}
		\left|
			G_N\left(\tfrac{i}{N},v\right)
		\right|
		\sum_{x\in\cX}
		\left|
			\varphi_t(x,\pi_t^{N,v}(x))
			-
			\varphi_t(x,\pi_t^v(x))
		\right|
		\mu_t^v(x)\,dv
	\\
	&\leq
	L_\varphi
	\int_{v\in I}
	\sum_{x\in\cX}
	\left|
		\pi_t^{N,v}(x)-\pi_t^v(x)
	\right|
	\mu_t^v(x)\,dv.
\end{align*}

By the definition of the step policy $\upi^N$, the interval
$\left(\frac{i-1}{N},\frac{i}{N}\right]$ corresponds to the unique
block where the deviating policy $\widetilde\bpi$ is used, whereas on
every other block
$\left(\frac{j-1}{N},\frac{j}{N}\right]$, $j\neq i$, the policy
coincides with the sampled graphon policy. Therefore,
\begin{align*}
	&
	\int_{v\in I}
	\sum_{x\in\cX}
	\left|
		\pi_t^{N,v}(x)-\pi_t^v(x)
	\right|
	\mu_t^v(x)\,dv
	\\
	&=
	\sum_{j\in[N]\setminus\{i\}}
	\int_{v\in
	(\frac{j-1}{N},\frac{j}{N}]}
	\sum_{x\in\cX}
	\left|
		\pi_t^{\frac{\lceil Nv\rceil}{N}}(x)
		-
		\pi_t^v(x)
	\right|
	\mu_t^v(x)\,dv+
	\int_{v\in
	(\frac{i-1}{N},\frac{i}{N}]}
	\sum_{x\in\cX}
	\left|
		\widetilde\pi_t(x)-\pi_t^v(x)
	\right|
	\mu_t^v(x)\,dv
	\\
	&\leq
	\sum_{j\in[N]\setminus\{i\}}
	\int_{v\in
	(\frac{j-1}{N},\frac{j}{N}]}
	L_\pi
	\left|
		\frac{\lceil Nv\rceil}{N}-v
	\right|
	\sum_{x\in\cX}\mu_t^v(x)\,dv+
	2C_\cA
	\int_{v\in
	(\frac{i-1}{N},\frac{i}{N}]}
	\sum_{x\in\cX}\mu_t^v(x)\,dv.
\end{align*}
Since $|\lceil Nv\rceil-Nv|\leq1$, it follows that $\left| \frac{\lceil Nv\rceil}{N}-v \right| \leq\frac1N$, and thus
\[
	\int_{v\in I}
	\sum_{x\in\cX}
	\left|
		\pi_t^{N,v}(x)-\pi_t^v(x)
	\right|
	\mu_t^v(x)\,dv
	\leq
	\frac{L_\pi+2C_\cA}{N}.
\]
Combining the bounds yields
\[
	\left|
		\mathfrak{Z}^{\frac{i}{N}}_t
		\left(G_N,\umu_t,\upi_t\right)
		-
		\mathfrak{Z}^{\frac{i}{N}}_t
		\left(G_N,\umu_t,\upi_t^N\right)
	\right|
	\leq
	\frac{L_\varphi(L_\pi+2C_\cA)}{N},
\]
which converges to zero as $N\to\infty$, uniformly with respect to
$\bupi\in\buPi^{L_\pi}$,
$\widetilde\bpi\in\bfPi$, and $\bumu\in\bucM$.

Finally, set $\epsilon_1=\epsilon/3$. Increasing $N_0$ if necessary, we may ensure that $|\cX|C_\varphi\omega^G(1/N) \leq\frac{\epsilon}{3}$ and $L_\varphi(L_\pi+2C_\cA)/N \leq \epsilon/3$. Therefore, for every $N\geq N_0$, there exists a subset $\cI_N\subseteq[N]$ of indices with $|\cI_N|\geq(1-p)N$ such that, for every $i\in\cI_N$,
\[
	\left|
		\mathfrak{Z}^{\frac{i}{N}}_t
		\left(G,\umu_t,\upi_t\right)
		-
		\mathfrak{Z}^{\frac{i}{N}}_t
		\left(G_N,\umu_t,\upi_t^N\right)
	\right|
	\leq\epsilon,
	\qquad
	\forall\,t\in\cT.
\]
\qed

\subsubsection{Proof of Lemma~\ref{lemma_muNtomu}}
\label{proof:lemma_muNtomu}

Let $C_{\varphi,M}:=\max\{C_\varphi,M\}$.
Since $\cF_M\subseteq\cF_{C_{\varphi,M}}$, it is sufficient to prove
the result for the larger class $\cF_{C_{\varphi,M}}$. For each
deviating index $i\in[N]$, we write $\bumu^N$ and $\upi_t^N$ in place
of $\bumu^{N,i}$ and $\upi_t^{N,i}$, respectively. We prove by
induction on $t$ the stronger assertion
\[
	\sup_{i\in[N]}
	\sup_{\bupi\in\buPi^{L_\pi}}
	\sup_{\widetilde\bpi\in\bfPi}
	\sup_{f\in\cF_{C_{\varphi,M}}}
	\E\left[
		\left|
			\umu_t^N(f)-\umu_t(f)
		\right|
	\right]
	\longrightarrow0
	\qquad\text{as }N\to\infty.
\]

For the initial step $t=0$, note that the random variables
$(X_0^j)_{j\in[N]}$ are independent and have common distribution
$\mu_{\mathrm{init}}$. By the Cauchy--Schwarz inequality and
$|f|\leq C_{\varphi,M}$,
\begin{align*}
	&
	\E\left[
		\left|
			\umu_0^N(f)-\umu_0(f)
		\right|
	\right]
	\\
	&=
	\E\Bigg[
	\Bigg|
		\int_{u\in I}
		\left(
			\sum_{x\in\cX}
			f^u(x)\mu_0^{N,u}(x)
			-
			\sum_{x\in\cX}
			f^u(x)\mu_{\mathrm{init}}(x)
		\right)du
	\Bigg|
	\Bigg]
	\\
	&=
	\E\Bigg[
	\Bigg|
		\sum_{j\in[N]}
		\int_{u\in
		(\frac{j-1}{N},\frac{j}{N}]}
		\left(
			f^u(X_0^j)
			-
			\E[f^u(X_0^j)]
		\right)du
	\Bigg|
	\Bigg]
	\\
	&\leq
	\Bigg(
	\E\Bigg[
	\Bigg|
		\sum_{j\in[N]}
		\left(
			\int_{u\in
			(\frac{j-1}{N},\frac{j}{N}]}
			f^u(X_0^j)\,du
			-
			\E\left[
				\int_{u\in
				(\frac{j-1}{N},\frac{j}{N}]}
				f^u(X_0^j)\,du
			\right]
		\right)
	\Bigg|^2
	\Bigg]
	\Bigg)^{1/2}
	\\
	&=
	\Bigg(
	\sum_{j\in[N]}
	\E\Bigg[
	\Bigg|
		\int_{u\in
		(\frac{j-1}{N},\frac{j}{N}]}
		f^u(X_0^j)\,du
		-
		\E\left[
			\int_{u\in
			(\frac{j-1}{N},\frac{j}{N}]}
			f^u(X_0^j)\,du
		\right]
	\Bigg|^2
	\Bigg]
	\Bigg)^{1/2}
	\\
	&\leq
	\sqrt{
		N
		\left(
			\frac{2C_{\varphi,M}}{N}
		\right)^2
	}
	=
	\frac{2C_{\varphi,M}}{\sqrt N}.
\end{align*}
The bound is independent of $i$, $\bupi$, $\widetilde\bpi$, and $f$.
Hence, the claim holds at $t=0$.

For the induction step, fix $t\in\cT$ and assume that
\[
	\sup_{i\in[N]}
	\sup_{\bupi\in\buPi^{L_\pi}}
	\sup_{\widetilde\bpi\in\bfPi}
	\sup_{f\in\cF_{C_{\varphi,M}}}
	\E\left[
		\left|
			\umu_t^N(f)-\umu_t(f)
		\right|
	\right]
	\longrightarrow0.
\]

Given
$\ueta,\widetilde\ueta\in L^2(I;\cP(\cX))$ and
$\upi,\widetilde\upi\in L^2(I;\cA)^\cX$, define the operator
$
	\bfT^\upi_{G,\widetilde\ueta,\widetilde\upi}
	:
	L^2(I;\cP(\cX))
	\longrightarrow
	L^2(I;\cP(\cX))
$
by
\[
	\left(
		\ueta\,
		\bfT^\upi_{G,\widetilde\ueta,\widetilde\upi}
	\right)^u
	=
	\sum_{x\in\cX}
	\eta^u(x)
	P_t
	\left(
		\cdot\mid
		x,\pi^u(x),
		\mathfrak Z_t^u
		(G,\widetilde\ueta,\widetilde\upi)
	\right),
	\qquad
	u\in I,
\]
so that
$
	\umu_{t+1}
	=
	\umu_t\,
	\bfT^{\upi_t}_{G,\umu_t,\upi_t}.
$

Let
$
	\upi_t^N
	=
	\upi_t^{N,(\widetilde\bpi,\Lambda^N(\bupi)^{-i})}
$
be defined as in~\eqref{eq_piNlambda}. By the triangle inequality,
\begin{align*}
	\E\left[
		\left|
			\umu_{t+1}^N(f)-\umu_{t+1}(f)
		\right|
	\right]
	&\leq
	\E\left[
		\left|
			\umu_{t+1}^N(f)
			-
			\umu_t^N
			\bfT^{\upi_t^N}_{G_N,\umu_t^N,\upi_t^N}(f)
		\right|
	\right]
	\\
	&\quad+
	\E\left[
		\left|
			\umu_t^N
			\bfT^{\upi_t^N}_{G_N,\umu_t^N,\upi_t^N}(f)
			-
			\umu_t^N
			\bfT^{\upi_t}_{G_N,\umu_t^N,\upi_t}(f)
		\right|
	\right]
	\\
	&\quad+
	\E\left[
		\left|
			\umu_t^N
			\bfT^{\upi_t}_{G_N,\umu_t^N,\upi_t}(f)
			-
			\umu_t^N
			\bfT^{\upi_t}_{G,\umu_t^N,\upi_t}(f)
		\right|
	\right]
	\\
	&\quad+
	\E\left[
		\left|
			\umu_t^N
			\bfT^{\upi_t}_{G,\umu_t^N,\upi_t}(f)
			-
			\umu_t^N
			\bfT^{\upi_t}_{G,\umu_t,\upi_t}(f)
		\right|
	\right]
	\\
	&\quad+
	\E\left[
		\left|
			\umu_t^N
			\bfT^{\upi_t}_{G,\umu_t,\upi_t}(f)
			-
			\umu_{t+1}(f)
		\right|
	\right]
	\\
	&=:I_1+I_2+I_3+I_4+I_5.
\end{align*}

For $I_1$, conditionally on
$\bm X_t=(X_t^j)_{j\in[N]}$, the variables
$(X_{t+1}^j)_{j\in[N]}$ are independent. Repeating the argument used
for $t=0$ gives
\begin{align*}
	I_1:= & \E \Bigl[\Bigl| \umu^N_{t+1}(f) - \umu^N_{t}\,\bfT^{\pi^N_t}_{\,  G_N, \,\umu^N_t, \,\pi^N_t } (f) \Bigr|\Bigr]                                                                                                                                              \\
	      & =
	\E \Bigl[\Bigl|\int_{u\in I}\sum_{x\in\cX} f^u(x) \,\mu_{t+1}^{N, u}(x) \, du
	\Bigr.\Bigr.                                                                                                                                                                                                                                                         \\
	      & \qquad\quad - \Bigl.\Bigl. \int_{u\in I}\sum_{x\in\cX} f^u(x) \sum_{x^\prime\in\cX} \mu^{N,u}_t(x^\prime) \,  P_t \Bigl(x \mid x^\prime,   \pi^{N,u}_t(x^\prime), \mathfrak{Z}^u_t\Bigl( G_N, \,\umu^N_t, \, \pi^N_t \Bigr) \Bigr)du
	\,\Bigr|\Bigr]                                                                                                                                                                                                                                                       \\
	      & = \E \Bigl[\Bigl|  \sum_{i\in [N]} \Bigl( \int_{u\in \big(\frac{i-1}{N},\ioN\big]} f^u(X^i_{t+1}) \, du - \E \bigg[\int_{u\in \big(\frac{i-1}{N},\ioN\big]} f^u(X^i_{t+1}) \,du \,\bigg|\, \bm{X}_t \bigg]  \Bigr)\Bigr|\Bigr]                               \\
	      & \leq \Bigl(\E \Bigl[\Bigl|  \sum_{i\in [N]} \Bigl( \int_{u\in \big(\frac{i-1}{N},\ioN\big]} f^u(X^i_{t+1}) \, du - \E \bigg[\int_{u\in \big(\frac{i-1}{N},\ioN\big]} f^u(X^i_{t+1}) \,du \,\bigg|\, \bm{X}_t \bigg]  \Bigr)\Bigr|^2\,\Bigr]\Bigr)^{\frac12}  \\
	      & = \Bigl(  \sum_{i\in [N]}  \E \Bigl[\Bigl|   \Bigl( \int_{u\in \big(\frac{i-1}{N},\ioN\big]} f^u(X^i_{t+1}) \, du - \E \bigg[\int_{u\in \big(\frac{i-1}{N},\ioN\big]} f^u(X^i_{t+1}) \,du \,\bigg|\, \bm{X}_t \bigg]  \Bigr)\Bigr|^2\,\Bigr]\Bigr)^{\frac12} \\
	      & \leq \sqrt{ N\cdot\Bigl(\frac{2 C_{\varphi,M}}{N}\Bigr)^2} = \frac{2C_{\varphi,M}}{\sqrt{N}},
\end{align*}
which converges to zero uniformly with respect to
$i$, $\bupi$, $\widetilde\bpi$, and $f$.

For $I_2$, by
Assumption~\ref{assp_P_lip}~\ref{assp_z_to_P_lip}%
~\ref{assp_a_to_P_lip}, the triangle inequality, and
$|f|\leq C_{\varphi,M}$,
\begin{align*}
	I_2
	&:=
	\E\left[
		\left|
			\umu_t^N
			\bfT^{\pi_t^N}_{G_N,\umu_t^N,\pi_t^N}(f)
			-
			\umu_t^N
			\bfT^{\upi_t}_{G_N,\umu_t^N,\upi_t}(f)
		\right|
	\right]
	\\
	&\leq
	\E\left[
		\left|
			\umu_t^N
			\bfT^{\pi_t^N}_{G_N,\umu_t^N,\pi_t^N}(f)
			-
			\umu_t^N
			\bfT^{\pi_t^N}_{G_N,\umu_t^N,\upi_t}(f)
		\right|
	\right]+
	\E\left[
		\left|
			\umu_t^N
			\bfT^{\pi_t^N}_{G_N,\umu_t^N,\upi_t}(f)
			-
			\umu_t^N
			\bfT^{\upi_t}_{G_N,\umu_t^N,\upi_t}(f)
		\right|
	\right]
	\\
	&\leq
	|\cX|C_{\varphi,M}L_{P,Z}
	\E\left[
		\int_{u\in I}
		\left|
			\mathfrak Z_t^u
			\left(G_N,\umu_t^N,\upi_t^N\right)
			-
			\mathfrak Z_t^u
			\left(G_N,\umu_t^N,\upi_t\right)
		\right|du
	\right]
	\\
	&\qquad+
	|\cX|C_{\varphi,M}L_{P,a}
	\E\left[
		\int_{u\in I}
		\sum_{x'\in\cX}
		\mu_t^{N,u}(x')
		\left|
			\pi_t^{N,u}(x')-\pi_t^u(x')
		\right|du
	\right].
\end{align*}

For the first integral, since $0\leq G_N\leq1$ and $\varphi$ is
$L_\varphi$-Lipschitz continuous,
\begin{align*}
	&
	\E\left[
		\int_{u\in I}
		\left|
			\mathfrak Z_t^u
			\left(G_N,\umu_t^N,\upi_t^N\right)
			-
			\mathfrak Z_t^u
			\left(G_N,\umu_t^N,\upi_t\right)
		\right|du
	\right]
	\\
	&\leq
	L_\varphi
	\E\left[
		\int_{u\in I}
		\int_{v\in I}
		G_N(u,v)
		\sum_{x\in\cX}
		\mu_t^{N,v}(x)
		\left|
			\pi_t^{N,v}(x)-\pi_t^v(x)
		\right|dv\,du
	\right]
	\\
	&\leq
	L_\varphi
	\E\left[
		\int_{v\in I}
		\sum_{x\in\cX}
		\mu_t^{N,v}(x)
		\left|
			\pi_t^{N,v}(x)-\pi_t^v(x)
		\right|dv
	\right].
\end{align*}

For the policy discrepancy, using that
$u\mapsto\pi_t^u(x)$ is $L_\pi$-Lipschitz and that actions are bounded
by $C_\cA$, we have
\begin{align*}
	&
	\E\left[
		\int_{u\in I}
		\sum_{x'\in\cX}
		\mu_t^{N,u}(x')
		\left|
			\pi_t^{N,u}(x')-\pi_t^u(x')
		\right|du
	\right]
	\\
	&=
	\E\left[
		\sum_{j\in[N]\setminus\{i\}}
		\int_{u\in
		(\frac{j-1}{N},\frac{j}{N}]}
		\left|
			\pi_t^{j/N}(X_t^j)
			-
			\pi_t^u(X_t^j)
		\right|du
	\right]+
	\E\left[
		\int_{u\in
		(\frac{i-1}{N},\frac{i}{N}]}
		\left|
			\widetilde\pi_t(X_t^i)
			-
			\pi_t^u(X_t^i)
		\right|du
	\right]
	\\
	&\leq
	\sum_{j\in[N]\setminus\{i\}}
	\int_{u\in
	(\frac{j-1}{N},\frac{j}{N}]}
	L_\pi
	\left|
		\frac{j}{N}-u
	\right|du
	+
	\frac{2C_\cA}{N}
	\leq
	\frac{(N-1)L_\pi}{N^2}
	+
	\frac{2C_\cA}{N}
	\leq
	\frac{L_\pi+2C_\cA}{N}.
\end{align*}

Combining the above estimates yields
\[
	I_2
	\leq
	|\cX|C_{\varphi,M}
	\left(
		L_{P,Z}L_\varphi+L_{P,a}
	\right)
	\frac{L_\pi+2C_\cA}{N},
\]
which converges to zero as $N\to\infty$, uniformly with respect to
$i\in[N]$, $\bupi\in\buPi^{L_\pi}$, and
$\widetilde\bpi\in\bfPi$.

For $I_3$, by Assumption~\ref{assp_P_lip}~\ref{assp_z_to_P_lip} and since $|f|\le C_{\varphi,M}$,
\begin{align*}
	 & I_3:=\E \Bigl[\Bigl| \umu^N_{t}\,\bfT^{\upi_t}_{\, G_N, \,\umu^N_t, \,\upi_t} (f) - \umu^N_{t}\,\bfT^{\upi_t}_{\, G, \,\umu^N_t, \,\upi_t}  (f) \Bigr|\Bigr]                                                                  \\
	 & =
	\E \Bigl[\Bigl| \int_{u\in I}\sum_{x\in\cX} f^u(x) \sum_{x^\prime\in\cX} \mu^{N,u}_t(x^\prime) \,  P_t \Bigl(x \mid x^\prime,\pi^{u}_t(x^\prime), \mathfrak{Z}^{u}_t\Bigl( G_N, \,\umu^N_t, \,\upi_t \Bigr) \Bigr) du
	\Bigr.\Bigr.                                                                                                                                                                                                                     \\
	 & \qquad\quad - \Bigl.\Bigl. \int_{u\in I}\sum_{x\in\cX} f^u(x) \sum_{x^\prime\in\cX} \mu^{N,u}_t(x^\prime) \,  P_t \Bigl(x \mid x^\prime,\pi^{u}_t(x^\prime), \mathfrak{Z}^{u}_t\Bigl( G, \,\umu^N_t, \,\upi_t \Bigr) \Bigr)du
	\,\Bigr|\Bigr]                                                                                                                                                                                                                   \\
	 & \leq
	C_{\varphi,M}L_{P,Z}\, |\cX| \, \E \Bigl[ \int_{u\in I} \Big| \mathfrak{Z}^{u}_t\Bigl( G_N, \,\umu^N_t, \,\upi_t\Bigr)  - \mathfrak{Z}^{u}_t\Bigl( G, \,\umu^N_t, \,\upi_t\Bigr)  \Big| \, du \,\Bigr].
\end{align*}

For every realization, since $\mu_t^{N,v}$ is a probability
distribution and $|\varphi_t(y,a)|\leq C_\varphi$, we have
\begin{align*}
	&
	\int_{u\in I}
	\left|
		\mathfrak{Z}^{u}_t
		\left(G_N,\umu^N_t,\upi_t\right)
		-
		\mathfrak{Z}^{u}_t
		\left(G,\umu^N_t,\upi_t\right)
	\right|du
	\\
	&=
	\int_{u\in I}
	\left|
		\int_{v\in I}
		\left(G_N(u,v)-G(u,v)\right)
		\sum_{y\in\cX}
		\varphi_t(y,\pi_t^v(y))
		\mu_t^{N,v}(y)\,dv
	\right|du
	\\
	&\leq
	C_\varphi
	\|G_N-G\|_{\infty\to1}.
\end{align*}
Therefore,
\[
	I_3
	\leq
	C_{\varphi,M}L_{P,Z}|\cX|C_\varphi
	\|G_N-G\|_{\infty\to1}
	\longrightarrow0
\]
as $N\to\infty$, uniformly with respect to
$i\in[N]$, $\bupi\in\buPi^{L_\pi}$,
$\widetilde\bpi\in\bfPi$, and $f\in\cF_{C_{\varphi,M}}$.

For $I_4$, again by
Assumption~\ref{assp_P_lip}~\ref{assp_z_to_P_lip} and
$|f|\leq C_{\varphi,M}$,
\begin{align*}
	I_4
	:=&\
	\E \Bigl[\Bigl|
	\umu^N_t\,
	\bfT^{\upi_t}_{G,\umu^N_t,\upi_t}(f)
	-
	\umu^N_t\,
	\bfT^{\upi_t}_{G,\umu_t,\upi_t}(f)
	\Bigr|\Bigr]
	\\
	=&\
	\E \Bigl[\Bigl|
	\int_{u\in I}
	\sum_{x\in\cX}
	f^u(x)
	\sum_{x'\in\cX}
	\mu_t^{N,u}(x')
	P_t\Bigl(
		x\mid x',
		\pi_t^u(x'),
		\mathfrak Z_t^u(G,\umu_t^N,\upi_t)
	\Bigr)du
	\\
	&\qquad -
	\int_{u\in I}
	\sum_{x\in\cX}
	f^u(x)
	\sum_{x'\in\cX}
	\mu_t^{N,u}(x')
	P_t\Bigl(
		x\mid x',
		\pi_t^u(x'),
		\mathfrak Z_t^u(G,\umu_t,\upi_t)
	\Bigr)du
	\Bigr|\Bigr]
	\\
	\leq&\
	|\cX|C_{\varphi,M}L_{P,Z}
	\E\Bigl[
	\int_{u\in I}
	\Bigl|
	\mathfrak Z_t^u(G,\umu_t^N,\upi_t)
	-
	\mathfrak Z_t^u(G,\umu_t,\upi_t)
	\Bigr|du
	\Bigr]
	\\
	=&\
	|\cX|C_{\varphi,M}L_{P,Z}
	\int_{u\in I}
	\E\Bigl[
	\Bigl|
	\int_{v\in I}
	G(u,v)
	\sum_{y\in\cX}
	\varphi_t(y,\pi_t^v(y))
	\left(
	\mu_t^{N,v}(y)-\mu_t^v(y)
	\right)dv
	\Bigr|
	\Bigr]du,
\end{align*}
where the last equality follows from Fubini's theorem.

For every $u\in I$, define $\widehat f^u(v,y) := G(u,v)\varphi_t(y,\pi_t^v(y))$,  $(v,y)\in I\times\cX$. By Assumption~\ref{assp_varphi_lip}, $|\widehat f^u(v,y)| \leq C_\varphi \leq C_{\varphi,M}$, so $\widehat f^u\in\cF_{C_{\varphi,M}}$. Hence,

\begin{align*}
	I_4
	&\leq
	|\cX|C_{\varphi,M}L_{P,Z}
	\int_{u\in I}
	\E\left[
	\left|
	\umu_t^N(\widehat f^u)
	-
	\umu_t(\widehat f^u)
	\right|
	\right]du
	\\
	&\leq
	|\cX|C_{\varphi,M}L_{P,Z}
	\sup_{g\in\cF_{C_{\varphi,M}}}
	\E\left[
	\left|
	\umu_t^N(g)-\umu_t(g)
	\right|
	\right].
\end{align*}
By the induction hypothesis, $I_4\to0$ as $N\to\infty$, uniformly
with respect to $i\in[N]$, $\bupi\in\buPi^{L_\pi}$,
$\widetilde\bpi\in\bfPi$, and
$f\in\cF_{C_{\varphi,M}}$.

Finally, for $I_5$,
\begin{align*}
	I_5
	:=\,
	&\E \Bigl[\Bigl|
	\umu^N_t\,
	\bfT^{\upi_t}_{G,\umu_t,\upi_t}(f)
	-
	\umu_{t+1}(f)
	\Bigr|\Bigr]
	\\
	=\,
	&\E \Bigl[\Bigl|
	\int_{u\in I}
	\sum_{x\in\cX}
	f^u(x)
	\sum_{x^\prime\in\cX}
	\mu_t^{N,u}(x^\prime)
	P_t\Bigl(
		x\mid x^\prime,
		\pi_t^u(x^\prime),
		\mathfrak{Z}_t^u(G,\umu_t,\upi_t)
	\Bigr)du
	\Bigr.\Bigr.
	\\
	&\qquad\quad
	-
	\Bigl.\Bigl.
	\int_{u\in I}
	\sum_{x\in\cX}
	f^u(x)
	\sum_{x^\prime\in\cX}
	\mu_t^u(x^\prime)
	P_t\Bigl(
		x\mid x^\prime,
		\pi_t^u(x^\prime),
		\mathfrak{Z}_t^u(G,\umu_t,\upi_t)
	\Bigr)du
	\Bigr|\Bigr]
	\\
	=\,
	&\E \Bigl[\Bigl|
	\int_{u\in I}
	\sum_{x\in\cX}
	f^u(x)
	\sum_{x^\prime\in\cX}
	\Bigl(
		\mu_t^{N,u}(x^\prime)
		-
		\mu_t^u(x^\prime)
		\Bigr)
	P_t\Bigl(
		x\mid x^\prime,
		\pi_t^u(x^\prime),
		\mathfrak{Z}_t^u(G,\umu_t,\upi_t)
	\Bigr)du
	\Bigr|\Bigr]
	\\
	=\,
	&\E \Bigl[\Bigl|
	\int_{u\in I}
	\sum_{x^\prime\in\cX}
	\widetilde f^u(x^\prime)
	\Bigl(
		\mu_t^{N,u}(x^\prime)
		-
		\mu_t^u(x^\prime)
		\Bigr)du
	\Bigr|\Bigr]
	\\
	=\,
	&\E\Bigl[
	\Bigl|
	\umu_t^N(\widetilde f)
	-
	\umu_t(\widetilde f)
	\Bigr|
	\Bigr],
\end{align*}
where
$
	\widetilde f^u(x^\prime)
	=
	\sum_{x\in\cX}
	f^u(x)\,
	P_t\Bigl(
		x\mid x^\prime,
		\pi_t^u(x^\prime),
		\mathfrak{Z}_t^u(G,\umu_t,\upi_t)
	\Bigr).
$
Since
$P_t(\cdot\mid x^\prime,\pi_t^u(x^\prime),
\mathfrak{Z}_t^u(G,\umu_t,\upi_t))$
is a probability distribution and
$|f^u(x)|\leq C_{\varphi,M}$, we have
\[
	\left|
	\widetilde f^u(x^\prime)
	\right|
	\leq
	\sum_{x\in\cX}
	|f^u(x)|
	P_t\Bigl(
		x\mid x^\prime,
		\pi_t^u(x^\prime),
		\mathfrak{Z}_t^u(G,\umu_t,\upi_t)
	\Bigr)
	\leq
	C_{\varphi,M}.
\]
Thus, $\widetilde f\in\cF_{C_{\varphi,M}}$. By the induction
hypothesis, $I_5\to0$ as $N\to\infty$, uniformly with respect to
$i\in[N]$, $\bupi\in\buPi^{L_\pi}$,
$\widetilde\bpi\in\bfPi$, and
$f\in\cF_{C_{\varphi,M}}$.

All five bounds are uniform with respect to
$i\in[N]$, $\bupi\in\buPi^{L_\pi}$,
$\widetilde\bpi\in\bfPi$, and
$f\in\cF_{C_{\varphi,M}}$. Combining them yields
\[
	\sup_{i\in[N]}
	\sup_{\bupi\in\buPi^{L_\pi}}
	\sup_{\widetilde\bpi\in\bfPi}
	\sup_{f\in\cF_{C_{\varphi,M}}}
	\E\left[
		\left|
			\umu_{t+1}^N(f)-\umu_{t+1}(f)
		\right|
	\right]
	\longrightarrow0,
\]
which completes the induction. Since
$\cF_M\subseteq\cF_{C_{\varphi,M}}$, the result follows.
\qed

\subsubsection{Proof of Lemma~\ref{coro_ZNtoZ_with_mu}}
\label{proof:coro_ZNtoZ_with_mu}

By the triangle inequality,
\begin{align}
	&
	\mathbb{E}\left[
		\left|
			\mathfrak{Z}^\ioN_t
			\left(G_N,\umu^N_t,\upi^N_t\right)
			-
			\mathfrak{Z}^\ioN_t
			\left(G,\umu_t,\upi_t\right)
		\right|
	\right]
	\notag
	\\
	&\quad\leq
	\mathbb{E}\left[
		\left|
			\mathfrak{Z}^\ioN_t
			\left(G_N,\umu^N_t,\upi^N_t\right)
			-
			\mathfrak{Z}^\ioN_t
			\left(G_N,\umu_t,\upi^N_t\right)
		\right|
	\right]+
	\left|
		\mathfrak{Z}^\ioN_t
		\left(G_N,\umu_t,\upi^N_t\right)
		-
		\mathfrak{Z}^\ioN_t
		\left(G,\umu_t,\upi_t\right)
	\right|.
	\label{eq_coroZNtoZ_with_mu}
\end{align}

\noindent
\textbf{First term.}
We have
\begin{align*}
	&
	\mathbb{E}\left[
		\left|
			\mathfrak{Z}^\ioN_t
			\left(G_N,\umu^N_t,\upi^N_t\right)
			-
			\mathfrak{Z}^\ioN_t
			\left(G_N,\umu_t,\upi^N_t\right)
		\right|
	\right]
	\\
	&=
	\E\left[
		\left|
			\int_{v\in I}
			G_N(\ioN,v)
			\sum_{y\in\cX}
			\varphi_t
			\left(y,\pi_t^{N,v}(y)\right)
			\left(
				\mu_t^{N,v}(y)-\mu_t^v(y)
			\right)dv
		\right|
	\right]
	\\
	&=
	\E\left[
		\left|
			\umu_t^N
			\left(\widehat f^{N,\ioN}\right)
			-
			\umu_t
			\left(\widehat f^{N,\ioN}\right)
		\right|
	\right],
\end{align*}
where
\[
	\widehat f^{N,u}(v,y)
	:=
	G_N(u,v)\,
	\varphi_t
	\left(y,\pi_t^{N,v}(y)\right),
	\qquad
	(v,y)\in I\times\cX.
\]
Since $0\leq G_N\leq1$ and
$|\varphi_t(y,a)|\leq C_\varphi$, we have
$
	\left|
		\widehat f^{N,u}(v,y)
	\right|
	\leq C_\varphi.
$
Although $\widehat f^{N,\ioN}$ depends on $N$, $i$, and the policies,
it belongs to $\cF_{C_\varphi}$. Hence, the uniform convergence in
Lemma~\ref{lemma_muNtomu} applies. Therefore, for every
$\epsilon>0$, there exists $N_1$ such that, for all $N\geq N_1$,
\[
	\mathbb{E}\left[
		\left|
			\mathfrak{Z}^\ioN_t
			\left(G_N,\umu^N_t,\upi^N_t\right)
			-
			\mathfrak{Z}^\ioN_t
			\left(G_N,\umu_t,\upi^N_t\right)
		\right|
	\right]
	\leq\frac{\epsilon}{2},
\]
for every $i\in[N]$ and every $t\in\cT$, uniformly with respect to
$\bupi\in\buPi^{L_\pi}$ and
$\widetilde\bpi\in\bfPi$.

\noindent
\textbf{Second term.}
By Lemma~\ref{lemma_ZtoZ_convergence}, for every
$\epsilon,p>0$, there exists $N_2$ such that, for all
$N\geq N_2$, there exists a subset
$\cI_N\subseteq[N]$ with $|\cI_N|\geq(1-p)N$
such that, for every $i\in\cI_N$,
\[
	\left|
		\mathfrak{Z}^\ioN_t
		\left(G_N,\umu_t,\upi^N_t\right)
		-
		\mathfrak{Z}^\ioN_t
		\left(G,\umu_t,\upi_t\right)
	\right|
	\leq\frac{\epsilon}{2},
	\qquad
	\forall\,t\in\cT.
\]
The integer $N_2$ can be chosen uniformly with respect to
$\bupi\in\buPi^{L_\pi}$ and
$\widetilde\bpi\in\bfPi$.

Taking $N_0:=\max\{N_1,N_2\}$,
we conclude from~\eqref{eq_coroZNtoZ_with_mu} that, for all
$N\geq N_0, i\in\cI_N, t\in\cT$,
\[
	\E\left[
		\left|
			\mathfrak{Z}^\ioN_t
			\left(G_N,\umu^N_t,\upi^N_t\right)
			-
			\mathfrak{Z}^\ioN_t
			\left(G,\umu_t,\upi_t\right)
		\right|
	\right]
	\leq\epsilon.
\]
The estimates are uniform with respect to
$\bupi\in\buPi^{L_\pi}$ and
$\widetilde\bpi\in\bfPi$.
\qed

\newpage

\end{document}